\newtheorem{theorem}{Theorem}[section]
\newtheorem{lemma}[theorem]{Lemma}
\newtheorem{corollary}[theorem]{Corollary}
\theoremstyle{definition}
\theoremstyle{remark}
\newtheorem{remark}[theorem]{Remark}
\numberwithin{equation}{section}
\newcommand{\di}{\displaystyle}
\begin{document}

\title[Euler Sums and a Theorem of Zagier]{Alternating Double Euler Sums, Hypergeometric Identities and a Theorem of Zagier}
\author{Lee-Peng Teo}

\address{Department of  Mathematics, Xiamen University Malaysia, Jalan Sunsuria, Bandar Sunsuria, 43900, Sepang, Selangor, Malaysia. }

\email{lpteo@xmu.edu.my}

\thanks{This work was started when I visited   J. H. Teh in National Tsing Hua University of Taiwan at the end of 2014. I would like to thank NTHU for their hospitality and J. H. Teh for the helpful discussions. }

\subjclass[2000]{Primary 33E20, 33C20}

\date{\today}

\keywords{Double Euler sums, Hypergeometric identities, Multiple zeta values, Multiple zeta star values  }

\begin{abstract}
In this work, we derive relations between generating functions of double stuffle relations and double shuffle relations to express the alternating double Euler sums $\zeta\left(\overline{r}, s\right)$, $\zeta\left(r, \overline{s}\right)$ and $\zeta\left(\overline{r}, \overline{s}\right)$ with $r+s$ odd in terms of zeta values. We also give a direct proof of a hypergeometric identity which is a limiting case of a basic hypergeometric identity of Andrews. Finally, we gave  another   proof for the formula of Zagier on the   multiple zeta values $\zeta(2,\ldots,2,3,2,\ldots,2)$.
\end{abstract}
\maketitle

\section{Introduction}

Following \cite{1}, for positive integers $k_1, \ldots, k_n$ with $k_n\geq 2$, define the multiple zeta values and multiple zeta star values by
\begin{align}
\zeta\left(k_1, \ldots, k_n\right)=&\sum_{1\leq m_1< \ldots< m_n }\frac{1}{m_1^{k_1}\ldots m_n^{k_n}},\label{eq1}\\
\zeta^{\star}\left(k_1, \ldots, k_n\right)=&\sum_{1\leq m_1\leq \ldots\leq m_n }\frac{1}{m_1^{k_1}\ldots m_n^{k_n}}.\label{eq2}
\end{align}
When $n=2$, the double sum
\begin{align}\label{eq3}
\zeta(r,s)=\sum_{m=2}^{\infty}\frac{1}{m^s}\sum_{j=1}^{k-1}\frac{1}{j^r }
\end{align}has been considered by Euler. Hence, multiple zeta values are also known as Euler sums.

As in \cite{6}, we can also define the following alternating double Euler sums
\begin{align}
\zeta(\overline{r},s)=&\sum_{m=2}^{\infty}\frac{1}{m^s}\sum_{j=1}^{m-1}\frac{(-1)^j}{j^r},\label{eq4}\\
\zeta(r, \overline{s})=&\sum_{m=2}^{\infty}\frac{(-1)^m}{m^s}\sum_{j=1}^{m-1}\frac{1}{j^r},\label{eq5}\\
\zeta(\overline{r}, \overline{s})=&\sum_{m=2}^{\infty}\frac{(-1)^m}{m^s}\sum_{j=1}^{m-1}\frac{(-1)^j}{j^r}.\label{eq6}
\end{align}
These can also be considered as extensions of the alternating series
\begin{align*}
\zeta(\overline{k})=&\sum_{m=1}^{\infty}\frac{(-1)^m}{m^k}\end{align*}to double Euler sums. Similar extensions can also be defined for multiple zeta values and multiple zeta star values.
It is well-known that
\begin{align}\label{eq7}\zeta(\overline{k})=-(1-2^{1-k})\zeta(k).\end{align}

In \cite{1}, Zagier studied the multiple zeta value $$H(a, b)=\zeta(\underbrace{2,\ldots,2}_a,3, \underbrace{2,\ldots,2}_b)$$ and the multiple zeta star value
 $$H^{\star}(a, b)=\zeta^{\star}(\underbrace{2,\ldots,2}_a,3, \underbrace{2,\ldots,2}_b).$$In particular, he proved the following formulas.

\begin{theorem}\label{Zagier}
Let
\begin{align*}
H(a)=&\zeta(\underbrace{2,\ldots,2}_a),\\
H^{\star}(a)=&\zeta^{\star}(\underbrace{2,\ldots,2}_a).
 \end{align*}
 Then
\begin{align}
H(a, b)=&2\sum_{r=1}^{K}(-1)^r\left\{\begin{pmatrix} 2r\\2a+2\end{pmatrix}\zeta(2r+1)+\begin{pmatrix} 2r\\2b+1\end{pmatrix}\zeta(\overline{2r+1})\right\}H(K-r),\label{eq8}\\
H^{\star}(a, b)=&-2\sum_{r=1}^{K} \left\{\left[\begin{pmatrix} 2r\\2a\end{pmatrix}-\delta_{r,a}\right]\zeta(2r+1)+\begin{pmatrix} 2r\\2b+1\end{pmatrix}\zeta(\overline{2r+1})\right\}H^{\star}(K-r),\label{eq9}
\end{align}where $K=a+b+1$.
\end{theorem}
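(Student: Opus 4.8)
The plan is to establish \eqref{eq8} and \eqref{eq9} simultaneously by passing to the two-variable generating functions of the families $H(a,b)$ and $H^{\star}(a,b)$, evaluating these generating functions in closed form by means of the hypergeometric identity proved earlier in this paper (the $q\to1$ limit of Andrews' basic hypergeometric identity), and then comparing Taylor coefficients. Compared with Zagier's original argument, the point is that the whole evaluation is reduced to one classical-type hypergeometric identity together with a routine coefficient extraction.

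First I would set
\[
\mathcal G(x,y)=\sum_{a,b\ge 0}H(a,b)\,x^{2a}y^{2b},\qquad
\mathcal G^{\star}(x,y)=\sum_{a,b\ge 0}H^{\star}(a,b)\,x^{2a}y^{2b}.
\]
Expanding the nested sum \eqref{eq1} for $H(a,b)$ and separating the summation indices lying below and above the entry carrying the exponent $3$ (call it $m$), the lower block contributes, after summation over $a$, the elementary symmetric product $\prod_{n=1}^{m-1}(1+x^{2}/n^{2})$, and the upper block contributes $\prod_{p\ge m+1}(1+y^{2}/p^{2})$; using $\prod_{p\ge1}(1+y^{2}/p^{2})=\sinh(\pi y)/(\pi y)$ this turns $\mathcal G(x,y)$ into a single sum over $m$ whose summand, in Pochhammer notation, is that of a ${}_4F_3$ at argument $1$ (upper parameters $1,1,1\pm ix$, lower parameters $2,2\pm iy$). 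For $\mathcal G^{\star}$ one uses complete homogeneous symmetric functions instead, so that the products become $\prod(1-x^{2}/n^{2})^{-1}$ and $\pi y/\sin(\pi y)$, and one gets the companion hypergeometric sum with $ix,iy$ replaced by $x,y$.

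Next I would apply the hypergeometric evaluation established earlier to put $\mathcal G(x,y)$ and $\mathcal G^{\star}(x,y)$ in closed form. The output is an explicit expression built from $t\mapsto\sinh(\pi t)/(\pi t)$ (respectively $\sin$ in the star case) and from digamma functions $\psi$ with arguments of the shape $1\pm i(x\pm y)$ and $\tfrac12\pm\tfrac{i}{2}(x\pm y)$, divided by the antisymmetrizing factor $x^{2}-y^{2}$. I would then expand this as a double Taylor series: the $\sinh/\sin$ factors via $\sum_{c\ge0}H(c)t^{2c}$ and its star analogue, and the digamma factors via $\psi(1+z)+\gamma=\sum_{k\ge2}(-1)^{k}\zeta(k)z^{k-1}$ and its half-integer version, the latter producing the alternating values through \eqref{eq7}. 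Expanding the powers $(x\pm y)^{2r}$ by the binomial theorem and collecting the coefficient of $x^{2a}y^{2b}$ then delivers precisely the binomial factors $\binom{2r}{2a+2}$ and $\binom{2r}{2b+1}$, the overall $2$, and the sign $(-1)^{r}$; in the star case the single factor shared by the two products (the $n=m$, resp.\ $p=m$, term) is responsible for the extra $-\delta_{r,a}$. With $K=a+b+1$, reading off the two generating-function identities then gives \eqref{eq8} and \eqref{eq9}.

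The main obstacle is the middle step: bringing the sum obtained from $\mathcal G$ (resp.\ $\mathcal G^{\star}$) into exactly the normalization governed by the Andrews-limit identity — which may require a contiguous-relation rearrangement — and then simplifying the resulting product of $\Gamma$-functions into the digamma form whose Taylor coefficients are visibly zeta values. Once the closed form is in hand the coefficient comparison is mechanical; the only delicate points are keeping track of the $x^{2}-y^{2}$ denominator, which is what forces the asymmetric shifts $2a+2$ versus $2b+1$, and of the boundary factor responsible for the Kronecker delta.
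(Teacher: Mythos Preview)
Your plan has a genuine gap at precisely the step you flag as ``the main obstacle'': you assert that the hypergeometric identity proved in this paper will put $\mathcal G(x,y)$ and $\mathcal G^{\star}(x,y)$ into a closed form built from $\sin/\sinh$ and digamma functions with arguments $1\pm i(x\pm y)$ and $\tfrac12\pm\tfrac{i}{2}(x\pm y)$, but the identity actually proved here (Theorem~\ref{theorem7}) does nothing of the sort. It evaluates a \emph{very-well-poised} ${}_{2s+4}F_{2s+3}$ at argument $-1$ as a Gamma prefactor times a \emph{multiple sum}; it does not evaluate a ${}_4F_3$ (or ${}_3F_2$) at argument $+1$ in terms of $\psi$-functions. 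The digamma closed form you describe is Zagier's original target, reached in \cite{1} by complex-analytic matching and in \cite{11} by ${}_3F_2$ contiguous relations---not by the Andrews-limit identity. So as written, your middle step is a restatement of the theorem rather than a proof of it.

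The paper's own route is different and uses two inputs you never invoke. First, Theorem~\ref{theorem7} is specialized to an ${}_8F_7$ at $-1$; after the limit $a\to 0$ and differentiation in an auxiliary parameter $\alpha$ at $\alpha=0$, one does \emph{not} obtain a closed form but rather Theorem~\ref{Pilehrood2},
\[
H^{\star}(a,b)=-4\,\zeta\!\left(2a+1,\overline{2b+2}\right)-2\,\zeta\!\left(\overline{2K+1}\right).
\]
Second---and this is the step your plan skips entirely---the alternating double Euler sum $\zeta(2a+1,\overline{2b+2})$ is evaluated by the double-stuffle/double-shuffle analysis of Section~2 (formula~\eqref{eq35} of Theorem~\ref{doubleEuler2}); it is the term $\tfrac{1+(-1)^s}{2}\zeta(r)\zeta(\overline{s})$ there, together with $H^{\star}(c)=-2\zeta(\overline{2c})$, that produces the $-\delta_{r,a}$, not any ``shared factor'' in your product expansion. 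Finally, \eqref{eq8} is deduced from \eqref{eq9} by the elementary generating-function duality $F(x,y)=-\dfrac{\sin\pi x}{\pi x}\dfrac{\sin\pi y}{\pi y}\,F^{\star}(y,x)$ (Lemma~\ref{lemma10}), rather than by a parallel hypergeometric computation. In short: the Andrews-limit identity alone does not give the $\psi$-closed form you are aiming at; the missing engine is the evaluation of $\zeta(r,\overline{s})$ for odd weight.
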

The formula for $H(a, b)$ is needed in the proof of Hoffman conjecture by F. Brown \cite{13}, which states that all multiple zeta values can be expressed as $\mathbb{Q}$- linear combinations of the multiple zeta values $\zeta\left(k_1, \ldots, k_n\right)$ with each $k_i$ equals to 2 or 3.

The proof provided by Zagier for Theorem \ref{Zagier} uses    complex analytic methods to show that the generating functions of both sides of the formula are equal. In \cite{11}, Li provided an alternative proof using transformations of  hypergeometric series $_3F_2$. Recently, Pilehrood and Pilehrood \cite{3} gave another proof  using a special hypergeometric identity stated in \cite{2}, which can be considered as a limiting case of a basic hypergeometric identity of Andrews \cite{12}.   Pilehrood and Pilehrood \cite{3} proved the following result:
\begin{theorem}\label{Pilehrood}
\begin{align}\label{eq10}H^{\star}(a, b)=-4\zeta\left(2a+1, \overline{2b+2}\right)-2\zeta\left(\overline{2a+2b+3}\right).\end{align}
\end{theorem}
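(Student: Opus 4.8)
The plan is to prove the identity $H^{\star}(a,b) = -4\zeta(2a+1,\overline{2b+2}) - 2\zeta(\overline{2a+2b+3})$ by expanding the star value $H^{\star}(a,b) = \zeta^{\star}(\underbrace{2,\dots,2}_a, 3, \underbrace{2,\dots,2}_b)$ as a nested sum and reorganizing it. First I would write out the defining iterated sum for $H^{\star}(a,b)$, with $a$ copies of $2$, then a $3$, then $b$ copies of $2$, all indices weakly increasing. The key combinatorial observation is that a string of equalities among the summation variables (arising from the $\leq$ in the star definition) collapses consecutive equal blocks, and that summing $1/m^2$ over a range that forms a ``staircase'' telescopes nicely. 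Concretely, for a block of $a$ twos attached below a variable $n$, the inner sum $\sum_{1\leq m_1\leq\cdots\leq m_a\leq n} (m_1\cdots m_a)^{-2}$ is $H^{\star}(a)$ truncated at $n$, i.e. it equals a partial sum that I will denote $H^{\star}_n(a)$, and similarly for the block above. So $H^{\star}(a,b)$ becomes $\sum_{n} n^{-3} H^{\star}_{n}(a) \,\bigl(\text{tail sum in }b\text{ from }n\text{ up}\bigr)$.

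Next I would exploit the well-known generating-function or closed-form evaluation of $H^{\star}(a)$ (and its partial sums): since $H^{\star}(a) = \zeta^{\star}(2,\dots,2)$ has a known evaluation in terms of $\pi^{2a}$, and more usefully the \emph{partial} sums $\sum_{m\leq n}(m!)^{-2}\cdots$ type expressions telescope, I expect the double sum to reduce — after summing the $b$-block as a ``difference of tails'' — to a single sum over one variable of the shape $\sum_{m<n} (-1)^?\, m^{-(2b+2)} n^{-(2a+1)}$ plus diagonal ($m=n$) contributions. The sign $(-1)$ and the over-line in $\zeta(2a+1,\overline{2b+2})$ should emerge from the identity $\sum_{k=1}^{n} (-1)^{k}\binom{\cdot}{\cdot}$-type alternations, or more likely from a partial-fractions/telescoping identity of the form $\frac{1}{j^2(j+\ell)} = \cdots$ that converts the nested squares into alternating signs. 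The target right-hand side has weight $2a+2b+3$, matching the left, which is a good consistency check to keep throughout.

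The cleanest route, and the one I would actually pursue, is to use the hypergeometric identity the excerpt attributes to \cite{2}/Andrews \cite{12}: set up a generating function $\sum_{a,b\geq 0} H^{\star}(a,b)\, x^{2a}\, y^{2b}$ and show it equals the corresponding generating function of $-4\zeta(2a+1,\overline{2b+2}) - 2\zeta(\overline{2a+2b+3})$. The left side's generating function can be written as a sum over $n$ of $\frac{1}{n^3}\cdot\frac{\text{something in }x}{\cdots}\cdot\frac{\text{something in }y}{\cdots}$ using the product formula $\sum_a H^{\star}_n(a) x^{2a} = \prod_{m\leq n}(1 - x^2/m^2)^{-1}$ (essentially the Weierstrass product for $\frac{\pi x}{\sin \pi x}$ truncated), and likewise the tail in $y$ gives $\prod_{m> n}(1-y^2/m^2)^{-1}$; their ratio telescopes across $n$. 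Matching this against the hypergeometric transformation — which is precisely what lets one rewrite a ${}_3F_2$ or a truncated product ratio as an alternating single sum — yields the claim. \textbf{The main obstacle} I anticipate is bookkeeping the boundary/diagonal terms correctly: the $-2\zeta(\overline{2a+2b+3})$ summand is exactly the residual ``$m=n$'' diagonal contribution that gets split off when one passes from $m\leq n$ to $m<n$, and getting its coefficient ($-2$, not $-1$ or $-4$) right — including the factor coming from $\zeta(\overline{k}) = -(1-2^{1-k})\zeta(k)$ in \eqref{eq7} — will require care. A secondary obstacle is justifying the interchange of summation and the convergence of the rearranged series, but since all weights are $\geq 3$ this is routine absolute convergence.
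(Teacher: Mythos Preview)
Your scaffolding is broadly aligned with the paper's: form the generating function $F^{\star}(x,y)=\sum_{a,b\geq 0}H^{\star}(a,b)x^{2a}y^{2b+2}$, recognize it (via the truncated products $\prod_{m\leq n}(1-x^2/m^2)^{-1}$ and $\prod_{m\geq n}(1-y^2/m^2)^{-1}$) as essentially $-\frac{\pi y}{\sin\pi y}\left.\frac{d}{dz}\right|_{z=0}\,{}_3F_2\!\left[\begin{smallmatrix}y,-y,z\\1+x,1-x\end{smallmatrix};1\right]$, and then compare with the generating function of the right-hand side. Your intuition that the $-2\zeta(\overline{2a+2b+3})$ arises as the diagonal ($m=n$) piece is also correct: in the paper's computation this is exactly the term $\frac{2k}{k^2-x^2}$ that sits next to the off-diagonal sum $4\sum_{j<k}\frac{j}{j^2-x^2}$.

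The genuine gap is the central step. You write that ``matching against the hypergeometric transformation \ldots\ yields the claim,'' but you never identify the transformation or how to apply it, and your guesses about where the alternating sign comes from (binomial alternation, partial fractions of $\frac{1}{j^2(j+\ell)}$) are not how it works. In the paper the proof hinges on the Andrews-type very-well-poised identity for ${}_{2s+4}F_{2s+3}$ at argument $-1$ (Theorem~\ref{theorem7} with $s=2$, i.e.\ an ${}_8F_7$): the $(-1)^k$ producing the bar in $\zeta(2a+1,\overline{2b+2})$ is literally the argument $-1$ of that series, not any combinatorial alternation. One then specializes the ${}_8F_7$ parameters to $(1+x,1-x,-\alpha+x,-\alpha-x,y,-y)$, takes the limit $a\to 0$ (which makes the left side $1+2\sum_{k\geq 1}(-1)^k\cdots$), and finally differentiates in the auxiliary parameter $\alpha$ at $\alpha=0$ to manufacture the factor $\sum_{j\leq k}\frac{2j}{j^2-x^2}$ on the left and the $1/k$ (hence the $z$-derivative giving $F^{\star}$) on the right. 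None of these three moves---the choice of identity, the limit, and the $\alpha$-differentiation---appears in your plan, and without them the argument does not close. Your ``first route'' via direct telescoping of the nested star sum is unlikely to produce the alternating sign at all; I would drop it and focus on making the hypergeometric step explicit.
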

 As a matter of fact, this identity has been proved by Pilehrood, Pilehrood and Tauraso in \cite{16}. From this identity, Pilehrood and Pilehrood \cite{3}  proved  Theorem \ref{Zagier} using complex analytic method.

 As was pointed out in \cite{16}, the formula for $H^{\star}(a, b)$ given in Theorem \ref{Zagier} is actually an immediate consequence of Theorem \ref{Pilehrood} and one of the formulas in the following theorem.
\begin{theorem}
\label{doubleEuler} If $k=r+s\geq 3$ is an odd positive integer, then
\begin{equation}\label{eq11}\begin{split}
\zeta(r, s)
=&-\frac{1}{2}\zeta(k)
+\frac{1+(-1)^s}{2}\zeta(r)\zeta(s)\\&
+(-1)^r\sum_{l=0}^{\frac{k-1}{2}}\left[\begin{pmatrix}k-2l-1\\r-1\end{pmatrix}\zeta(k-2l)
+  \begin{pmatrix}k-2l-1\\s-1\end{pmatrix}\zeta(k-2l)\right]\zeta(2l),
\end{split}\end{equation}
\begin{equation}\label{eq12}\begin{split}
\zeta(\overline{r}, s)
=&-\frac{1}{2}\zeta(\overline{k})
+\frac{1+(-1)^s}{2}\zeta(\overline{r})\zeta(s)
\\&+(-1)^r\sum_{l=0}^{\frac{k-1}{2}}\left[\begin{pmatrix}k-2l-1\\r-1\end{pmatrix}\zeta(\overline{k-2l})
+ \begin{pmatrix}k-2l-1\\s-1\end{pmatrix}\zeta(k-2l)\right]\zeta(\overline{2l}),
\end{split}\end{equation}
\begin{equation}\label{eq13}\begin{split}
\zeta(r, \overline{s})=&-\frac{1}{2} \zeta(\overline{k})+\frac{1+(-1)^s}{2}\zeta(r)\zeta(\overline{s})
\\&+(-1)^r\sum_{l=0}^{\frac{k-1}{2}}\left[ \begin{pmatrix}k-2l-1\\r-1\end{pmatrix}\zeta(k-2l) + \begin{pmatrix}k-2l-1\\s-1\end{pmatrix}\zeta(\overline{k-2l})\right]\zeta(\overline{2l}),
\end{split}\end{equation}
\begin{equation}\label{eq14}\begin{split}
\zeta(\overline{r}, \overline{s})=&-\frac{1}{2}\zeta(k)
+\frac{1+(-1)^s}{2}\zeta(\overline{r})\zeta(\overline{s})
\\&+(-1)^r\sum_{l=0}^{\frac{k-1}{2}}\left[\begin{pmatrix}k-2l-1\\r-1\end{pmatrix}\zeta(\overline{k-2l})
+ \begin{pmatrix}k-2l-1\\s-1\end{pmatrix}\zeta(\overline{k-2l})\right]\zeta(2l).
\end{split}\end{equation}
\end{theorem}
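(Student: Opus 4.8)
The plan is to derive all four identities at once from the \emph{double shuffle relations} for the level-two double sums, repackaged as identities between two-variable generating functions (exactly the ``generating functions of double stuffle relations and double shuffle relations'' of the abstract). Write $Z_{\eta,\epsilon}(a,b)=\sum_{1\le m<n}\eta^{m}\epsilon^{n}m^{-a}n^{-b}$ with $\eta,\epsilon\in\{+1,-1\}$, so that $Z_{+,+}=\zeta(a,b)$, $Z_{-,+}=\zeta(\overline a,b)$, $Z_{+,-}=\zeta(a,\overline b)$, $Z_{-,-}=\zeta(\overline a,\overline b)$; the two signs are carried as formal parameters and specialised only at the end. \emph{Stuffle step.} The harmonic product gives, for $a,b\ge2$, relations such as
\[
\zeta(\overline a)\zeta(b)=Z_{-,+}(a,b)+Z_{+,-}(b,a)+\zeta(\overline{a+b}),
\]
together with its three sign variants. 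Since $\sum_{a}\zeta(a)x^{a-1}=-\psi(1-x)-\gamma$ and $\sum_{a}\zeta(\overline a)x^{a-1}=\tfrac12\bigl[\psi(\tfrac{1-x}{2})-\psi(\tfrac{2-x}{2})\bigr]$ are elementary digamma-type functions, summing these identities against $x^{a-1}y^{b-1}$ turns them into a closed-form expression for $\Phi^{\mathrm{st}}_{\eta,\epsilon}(x,y):=\sum_{a,b}Z_{\eta,\epsilon}(a,b)x^{a-1}y^{b-1}$, of the shape (product of one-variable digammas) minus a ``diagonal'' term built from $\sum_{k}\zeta(\overline{k})\tfrac{x^{k}-(-y)^{k}}{x+y}$-type pieces.

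\emph{Shuffle step.} Using the iterated-integral representation over the forms $\tfrac{dt}{t},\tfrac{dt}{1-t},\tfrac{dt}{1+t}$, the depth-one integrals for $\zeta(\overline a)$ and $\zeta(b)$ shuffle into
\[
\zeta(\overline a)\zeta(b)=\sum_{c+d=a+b}\left[\binom{d-1}{b-1}+\binom{d-1}{a-1}\right]\Xi_{c,d},
\]
where $\Xi_{c,d}$ is the depth-two level-two integral whose expansion in the $Z_{\pm,\pm}$-basis is fixed by a short sign lemma (the sign of such an integral in terms of the positions of the $\tfrac{dt}{1+t}$-forms). These are precisely the binomial coefficients occurring in \eqref{eq11}--\eqref{eq14}. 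Passing again to generating functions yields a second closed form $\Phi^{\mathrm{sh}}_{\eta,\epsilon}(x,y)$, now assembled from a Beta-type kernel / a quotient of the form $\tfrac{G(x)-G(y)}{x+y}$, whose expansion manufactures exactly those binomials.

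\emph{Combining and solving for $k$ odd.} Both $\Phi^{\mathrm{st}}_{\eta,\epsilon}$ and $\Phi^{\mathrm{sh}}_{\eta,\epsilon}$ differ from the unknown $\Phi_{\eta,\epsilon}(x,y)=\sum Z_{\eta,\epsilon}(a,b)x^{a-1}y^{b-1}$ only by explicit products of one-variable digamma/$\Gamma$ functions; subtracting cancels those products and leaves a single functional equation for $\Phi_{\eta,\epsilon}$ alone, relating $Z_{\eta,\epsilon}(r,s)$ to its $x\leftrightarrow y$ reflection $Z_{\epsilon,\eta}(s,r)$ and to sums of products of single (alternating) zeta values. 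When $k=r+s$ is \emph{odd} the reflection fixes the parity, the even-weight contributions drop out, and the remaining relation can be solved for $Z_{\eta,\epsilon}(r,s)$ because the even zeta values are, by \eqref{eq7} and the Euler formula, rational multiples of powers of $\pi$, and the reflection formula $\Gamma(1-x)\Gamma(x)=\pi/\sin\pi x$ collapses the $\Gamma$-quotients appearing on the right. Extracting the coefficient of $x^{r-1}y^{s-1}$, and translating the $\Xi$-basis and the values $\zeta(\overline{2l})$, $\zeta(\overline{k-2l})$ back and forth via \eqref{eq7}, produces \eqref{eq11}--\eqref{eq14} simultaneously.

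\emph{Main obstacle.} Two technical points carry the weight. First, the sign bookkeeping: both products, and especially the shuffle product, mix the four sign patterns, and tracking which $Z_{\pm,\pm}$ a given level-two integral decomposes into is the combinatorial heart of the argument; carrying $\eta,\epsilon$ as parameters throughout is what permits a uniform treatment. Second, regularisation and pole cancellation: the generating functions naturally reach the divergent boundary ($a=1$ in $\zeta(\overline a)$, or $s=1$), so one must either work with the shuffle/stuffle-regularised values together with the regularisation comparison relation, or restrict to the sub-family of relations involving only convergent sums and check that it still determines every $Z_{\eta,\epsilon}(r,s)$ with $r+s$ odd; relatedly, one must verify that the spurious $1/(x+y)$ poles present in $\Phi^{\mathrm{st}}$ and $\Phi^{\mathrm{sh}}$ cancel in the difference, which is the concrete mechanism that makes the parity argument go through.
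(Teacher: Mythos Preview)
Your ingredients are right---the stuffle relations (Lemma~\ref{lemma1}) and the shuffle relations (Theorem~\ref{shuffle})---but the step where you ``subtract'' them to get a single functional equation for one $\Phi_{\eta,\epsilon}$ does not go through, and this is the heart of the matter. First, your shuffle formula is not correct: the two binomial sums attach to \emph{different} sign patterns, e.g.\ $\zeta(\overline r)\zeta(s)=\sum_j\binom{j-1}{r-1}\zeta(\overline{k-j},\overline{j})+\sum_j\binom{j-1}{s-1}\zeta(\overline{k-j},j)$, so there is no single $\Xi_{c,d}$. Second, in generating-function form the shuffle identity reads $F_1(x,y)=G_1(x,x+y)+G_3(y,x+y)$ while the stuffle reads $F_1(x,y)=G_1(x,y)+G_2(y,x)+T_2(x,y)$: the unknowns appear with \emph{different arguments} and with \emph{three different sign patterns} $G_1,G_2,G_3$ simultaneously, so a single subtraction leaves a coupled relation among $G_1,G_2,G_3$ at incommensurable points, not an equation in $\Phi_{\eta,\epsilon}$ alone.

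What actually closes the system is the paper's Theorem~\ref{R3}: one \emph{iterates} the stuffle and shuffle relations alternately six times, passing $G_1\to G_2\to G_1\to G_3\to G_3\to G_1$ with argument substitutions $(x,y)\mapsto(y,x)$ and $(x,y)\mapsto(x-y,x)$, and the composite brings $G_1(x,y)$ back to $G_1(-x,-y)$ plus an explicit combination of the $F$'s and $T$'s. Only then does the odd-$k$ parity trick apply, because $G_i(x,y)-G_i(-x,-y)=2G_i(x,y)$ when $r+s=k$ is odd. Your plan is missing exactly this hexagon iteration; without it there is no mechanism to decouple the three generating functions. Incidentally, the digamma/Gamma machinery and the reflection formula are red herrings here: the paper works at \emph{fixed weight} with polynomial generating functions, and no transcendental function identities enter the derivation of \eqref{eq12}--\eqref{eq14}.
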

One observes some symmetries  among these  four formulas. The first formula \eqref{eq11} in this theorem was proved in \cite{6} using matrices, and the other three  formulas \eqref{eq12}--\eqref{eq14} were mentioned in the same paper but the details of proofs   were not given. Proofs of these formulas were given in \cite{15} using contour integral representations. The formula for $H^{\star}(a, b)$ given in Theorem \ref{Zagier} is an immediate consequence of Theorem \ref{Pilehrood} and the third formula in Theorem \ref{doubleEuler}.

In this work, we are going to prove the   formulas \eqref{eq12}--\eqref{eq14} in Theorem \ref{doubleEuler} using the method used by Zagier \cite{1}  to prove the   formula \eqref{eq11}. This method has been outlined in \cite{8} and explored in \cite{10}. As pointed out in \cite{9}, the formulas in Theorem \ref{doubleEuler} can also be obtained by taking the $q\rightarrow 1$ limits of the corresponding $q$-analogs proved in \cite{14}. Nevertheless, we find it worthwhile to present the proof along the line of Zagier \cite{1}.

After proving the   formulas \eqref{eq12}--\eqref{eq14}, we give a direct proof to the hypergeometric identity used in \cite{3} to prove Theorem \ref{Pilehrood}, which is of interest in its own right. We then give a slightly simpler proof to Theorem \ref{Pilehrood}. From this, the formula \eqref{eq9} for $H^{\star}(a,b)$ in Theorem \ref{Zagier} follows immediately. We then prove the formula \eqref{eq8} for $H(a,b)$ in Theorem \ref{Zagier} from \eqref{eq9}, which better reflects the symmetries between these two formulas.

\section{Alternating Double Euler Sums}
In this section, we prove the formulas \eqref{eq12}--\eqref{eq14} in Theorem \ref{doubleEuler} using the method of Zagier \cite{1, 8}.
First we have the following well-known double-stuffle relations.
\begin{lemma}\label{lemma1}For $r\geq 1$, $s\geq 2$,
\begin{align}
\zeta(\overline{r})\zeta(s)=&\zeta(\overline{r},s)+\zeta(s, \overline{r})+\zeta(\overline{r+s}).\label{eq15}\end{align}For $r\geq 1$, $s\geq 1$,
\begin{align}
\zeta(\overline{r})\zeta(\overline{s})=&\zeta(\overline{r},\overline{s})+\zeta(\overline{s}, \overline{r})+\zeta(r+s).\label{eq16}
\end{align}
\end{lemma}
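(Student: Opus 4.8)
The plan is to establish both identities by the standard ``stuffle'' (harmonic product) argument, splitting the product of two single series according to the relative order of the two summation indices. For \eqref{eq15}, I would start from the Cauchy-product decomposition
\[
\zeta(\overline{r})\,\zeta(s)=\left(\sum_{i\ge 1}\frac{(-1)^i}{i^r}\right)\left(\sum_{m\ge 1}\frac{1}{m^s}\right)=\sum_{i,m\ge 1}\frac{(-1)^i}{i^r m^s},
\]
which converges absolutely since $s\ge 2$ forces $r+s\ge 3$ and the inner series are (conditionally, resp. absolutely) summable with the double sum absolutely convergent after pairing. I would then partition the index set $\{(i,m):i,m\ge 1\}$ into the three pieces $i<m$, $m<i$, and $i=m$. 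The piece $i<m$ gives $\sum_{m\ge 2}\frac{1}{m^s}\sum_{i=1}^{m-1}\frac{(-1)^i}{i^r}=\zeta(\overline{r},s)$ by definition \eqref{eq4}; the piece $m<i$ gives $\sum_{i\ge 2}\frac{(-1)^i}{i^r}\sum_{m=1}^{i-1}\frac{1}{m^s}=\zeta(s,\overline{r})$ by definition \eqref{eq5} (with the roles of $r,s$ interchanged and the sign attached to the outer index $i$); and the diagonal $i=m$ contributes $\sum_{m\ge 1}\frac{(-1)^m}{m^{r+s}}=\zeta(\overline{r+s})$ by \eqref{eq7}'s defining series. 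Summing the three pieces yields \eqref{eq15}.

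For \eqref{eq16} the argument is identical in structure, starting from
\[
\zeta(\overline{r})\,\zeta(\overline{s})=\sum_{i,m\ge 1}\frac{(-1)^{i+m}}{i^r m^s},
\]
and again splitting into $i<m$, $m<i$, $i=m$. The off-diagonal pieces become $\zeta(\overline{r},\overline{s})$ and $\zeta(\overline{s},\overline{r})$ via \eqref{eq6}, since in both the sign is $(-1)^{\text{(inner)}}\cdot(-1)^{\text{(outer)}}$; the diagonal $i=m$ now gives $\sum_{m\ge 1}\frac{(-1)^{2m}}{m^{r+s}}=\sum_{m\ge 1}\frac{1}{m^{r+s}}=\zeta(r+s)$, which is where the unbarred $\zeta(r+s)$ in \eqref{eq16} comes from. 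Note \eqref{eq16} is valid already for $r,s\ge 1$ because the presence of the alternating sign in \emph{both} factors makes each single series converge (it reduces to $\zeta(\overline{1})=-\log 2$ in the worst case) and makes the rearrangement legitimate.

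The only genuine point requiring care — and the step I would single out as the ``main obstacle,'' though it is a mild one — is the justification of rearranging the double series, since $\zeta(\overline{1})$ is only conditionally convergent. For \eqref{eq15} with $r\ge 2$ everything is absolutely convergent and there is nothing to check; for $r=1$ one should argue by inserting a convergence factor (e.g.\ work with $\sum (-1)^i i^{-1} x^i$ and let $x\to 1^-$ by Abel's theorem), or equivalently note that the partial-sum identity
\[
\left(\sum_{i=1}^{N}\frac{(-1)^i}{i^r}\right)\left(\sum_{m=1}^{N}\frac{1}{m^s}\right)
=\sum_{\substack{i,m\le N\\ i<m}}+\sum_{\substack{i,m\le N\\ m<i}}+\sum_{\substack{i=m\le N}}
\]
holds for every finite $N$, and then pass to the limit using that the tails of the $i<m$ and $m<i$ sums go to zero (dominated, after summation by parts in the $(-1)^i$ variable, by $\sum_m m^{-s}\cdot O(m^{-1})$ which converges for $s\ge 2$). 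Since these are standard facts — indeed the lemma is stated as ``well-known'' — I would present the finite-$N$ identity and the passage to the limit in one or two lines and move on.
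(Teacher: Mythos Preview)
Your proposal is correct and follows essentially the same approach as the paper: the paper also splits the product of the two single series into the three regions $j<m$, $j=m$, $j>m$ and identifies the resulting pieces with $\zeta(\overline{r},s)$, $\zeta(\overline{r+s})$, and $\zeta(s,\overline{r})$ (and analogously for \eqref{eq16}). Your discussion of the convergence issue when $r=1$ is in fact more careful than the paper's, which simply notes that $\zeta(\overline{1})=\log 2$ is well-defined and proceeds formally.
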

\begin{proof}We note that $\zeta(\overline{1})=\log 2$ is well-defined.
Eq. \eqref{eq15} follows from
\begin{align*}
\sum_{m=1}^{\infty}\frac{(-1)^m}{m^r}\sum_{j=1}^{\infty}\frac{1}{j^s}=\sum_{m=1}^{\infty}\frac{(-1)^m}{m^r}\sum_{j=1}^{m-1}\frac{1}{j^s}+
\sum_{m=1}^{\infty}\frac{(-1)^m}{m^r}\frac{1}{m^s}+\sum_{m=1}^{\infty}\frac{(-1)^m}{m^r}\sum_{j=m+1}^{\infty}\frac{1}{j^s}.
\end{align*} Eq. \eqref{eq16} is proved in the same way.
\end{proof}

When $r\geq 1$, $\zeta(r, \overline{1})$ and $\zeta(\overline{r}, \overline{1})$ are convergent. However, $\zeta(\overline{r}, s)$ is convergent only if $s\geq 2$. As in \cite{1}, we define $\zeta(1)$ symbolically as $T$. For $r\geq 1$, let
\begin{align*}
\zeta(\overline{r}, 1)=\zeta(\overline{r})T-\zeta(1,\overline{r})-\zeta(\overline{r+1}).
\end{align*}Then the relation \eqref{eq15} still holds symbolically for $s=1$.

Fix an integer $k$,  we define the following generating functions:
\begin{align*}
F_1(x,y)=&\sum_{\substack{r, s\geq 1\\ r+s=k}} \zeta(\overline{r})\zeta(s)x^{r-1}y^{s-1},\\
F_2(x,y)=&\sum_{\substack{r, s\geq 1\\ r+s=k}}  \zeta(\overline{r})\zeta(\overline{s})x^{r-1}y^{s-1},\end{align*}\begin{align*}
G_1(x, y)=&\sum_{\substack{r, s\geq 1\\ r+s=k}}  \zeta(\overline{r}, s)x^{r-1}y^{s-1},\\
G_2(x, y)=&\sum_{\substack{r, s\geq 1\\ r+s=k}}  \zeta(r, \overline{s})x^{r-1}y^{s-1},\\G_3(x, y)=&\sum_{\substack{r, s\geq 1\\ r+s=k}}  \zeta(\overline{r}, \overline{s})x^{r-1}y^{s-1},\end{align*}\begin{align*}
T_1(x,y)=&\zeta(k)\sum_{\substack{r, s\geq 1\\ r+s=k}} x^{r-1}y^{s-1}=\zeta(k)\frac{x^{k-1}-y^{k-1}}{x-y},\\
T_2(x,y)=&\zeta(\overline{k})\sum_{\substack{r, s\geq 1\\ r+s=k}} x^{r-1}y^{s-1}=\zeta(\overline{k})\frac{x^{k-1}-y^{k-1}}{x-y}.
\end{align*}
Notice that only $F_2(x,y)$, $T_1(x,y)$ and $T_2(x,y)$  are symmetric in $x$ and $y$.
The relations in Lemma \ref{lemma1} translate into the following identities:
\begin{theorem}\label{R1}
\begin{align}
F_1(x,y)=G_1(x,y)+G_2(y, x)+T_2(x,y),\label{eq17}\\
F_2(x,y)=G_3(x,y)+G_3(y, x)+T_1(x,y).\label{eq18}
\end{align}
\end{theorem}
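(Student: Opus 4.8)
The plan is to translate the double-stuffle relations of Lemma~\ref{lemma1} into identities between the generating functions by a direct coefficient comparison. Recall that $F_1, F_2, G_1, G_2, G_3, T_1, T_2$ are all polynomials in $x, y$ whose coefficients are indexed by pairs $(r,s)$ with $r, s \geq 1$ and $r + s = k$; the monomial $x^{r-1} y^{s-1}$ records the pair $(r,s)$. The key bookkeeping observation is how each term of \eqref{eq15} and \eqref{eq16} sits inside these generating functions: $\zeta(\overline{r})\zeta(s) x^{r-1} y^{s-1}$ assembles into $F_1(x,y)$, and $\zeta(\overline{r},s) x^{r-1} y^{s-1}$ assembles into $G_1(x,y)$, but the cross term $\zeta(s,\overline{r})$ in \eqref{eq15}, when multiplied by $x^{r-1} y^{s-1}$, is the coefficient of $x^{r-1} y^{s-1}$ in $\sum \zeta(s,\overline{r}) x^{r-1} y^{s-1}$, which by swapping the roles of the variables equals $G_2(y,x)$ since $G_2(x,y) = \sum \zeta(r,\overline{s}) x^{r-1} y^{s-1}$. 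Finally $\zeta(\overline{r+s}) = \zeta(\overline{k})$ is constant over the sum, so it contributes $\zeta(\overline{k}) \sum x^{r-1} y^{s-1} = T_2(x,y)$.

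Concretely, for \eqref{eq17} I would multiply the relation \eqref{eq15}, namely $\zeta(\overline{r})\zeta(s) = \zeta(\overline{r},s) + \zeta(s,\overline{r}) + \zeta(\overline{r+s})$, by $x^{r-1} y^{s-1}$ and sum over all $r, s \geq 1$ with $r + s = k$. The left side is $F_1(x,y)$ by definition. On the right side, the first term gives $G_1(x,y)$; the third term gives $\zeta(\overline{k}) \cdot \frac{x^{k-1} - y^{k-1}}{x-y} = T_2(x,y)$ using the stated closed form. For the middle term, $\sum_{r+s=k} \zeta(s,\overline{r}) x^{r-1} y^{s-1}$: reindex by interchanging the names $r \leftrightarrow s$, so this becomes $\sum_{r+s=k} \zeta(r,\overline{s}) x^{s-1} y^{r-1} = \sum_{r+s=k} \zeta(r,\overline{s}) y^{r-1} x^{s-1} = G_2(y,x)$. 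This yields \eqref{eq17}. For \eqref{eq18} the argument is identical, starting from \eqref{eq16}: the left side gives $F_2(x,y)$, the term $\zeta(\overline{r},\overline{s})$ gives $G_3(x,y)$, the symmetric cross term $\zeta(\overline{s},\overline{r})$ gives $G_3(y,x)$ by the same reindexing (note $G_3$ involves $\zeta(\overline{r},\overline{s})$ with both arguments barred, so swapping is clean), and $\zeta(r+s) = \zeta(k)$ gives $T_1(x,y)$.

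There is essentially one subtlety to address rather than a genuine obstacle: the $s=1$ boundary case of \eqref{eq15}. Lemma~\ref{lemma1} as stated requires $s \geq 2$ for \eqref{eq15}, but the sum defining $F_1$ and $G_1$ includes the term $(r,s) = (k-1,1)$. This is exactly why the excerpt introduces the symbolic value $\zeta(1) = T$ and \emph{defines} $\zeta(\overline{r},1) = \zeta(\overline{r})T - \zeta(1,\overline{r}) - \zeta(\overline{r+1})$ so that \eqref{eq15} holds symbolically at $s=1$ as well. With this convention in force, every term $(r,s)$ in the generating-function sums satisfies \eqref{eq15} (genuinely for $s \geq 2$, by definition for $s = 1$), so the coefficient comparison goes through uniformly; I would state this explicitly when handling \eqref{eq17}. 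For \eqref{eq16}, $\zeta(r,\overline{1})$ and $\zeta(\overline{r},\overline{1})$ are genuinely convergent (as noted in the excerpt), so no symbolic extension is needed there and \eqref{eq18} holds without caveat. The remaining steps are purely formal manipulations of finite polynomial sums, so I would present them compactly.
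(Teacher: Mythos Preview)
Your proposal is correct and follows exactly the approach the paper intends: the paper presents Theorem~\ref{R1} as an immediate translation of the stuffle relations in Lemma~\ref{lemma1} into generating-function form, and your coefficient-by-coefficient argument makes that translation explicit. Your handling of the $s=1$ boundary via the symbolic convention for $\zeta(\overline{r},1)$ is precisely why that convention was introduced just before the theorem, so you have identified and resolved the only subtlety.
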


Next, we derive the double-shuffle relation for alternating double Euler sums.
Notice that
\begin{align*}
\zeta(r)=&\sum_{m=1}^{\infty}\frac{1}{m^r}\\
=&\frac{1}{\Gamma(r)}\int_0^{\infty} t^{r-1}\sum_{n=1}^{\infty} e^{-tn}dt\\
=&\frac{1}{\Gamma(r)}\int_0^{\infty}\frac{t^{r-1}}{e^t-1}dt.
\end{align*}
In the same way, one can derive the formula
\begin{align*}
\zeta(\overline{r})=& -\frac{1}{\Gamma(r)}\int_0^{\infty}\frac{t^{r-1}}{e^t+1}dt.
\end{align*}For alternating double zeta values, we have
\begin{lemma}\label{lemma5}
\begin{align}
\zeta(\overline{r}, s)=&-\frac{1}{\Gamma(r)\Gamma(s)}\int_0^{\infty}\int_0^{\infty}\frac{t^{s-1}u^{r-1}}{(e^t-1)(e^{t+u}+1) }dudt,\label{eq19}\\
\zeta\left(r, \overline{s} \right)=&\frac{1}{\Gamma(r)\Gamma(s)}\int_0^{\infty}\int_0^{\infty}\frac{t^{s-1}u^{r-1}}{(e^t+1)(e^{t+u}+1) }dudt,\label{eq20}\\
\zeta\left(\overline{r}, \overline{s} \right)=&-\frac{1}{\Gamma(r)\Gamma(s)}\int_0^{\infty}\int_0^{\infty}\frac{t^{s-1}u^{r-1}}{(e^t+1)(e^{t+u}-1) }dudt.\label{eq21}
\end{align}
\end{lemma}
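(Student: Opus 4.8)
The plan is to start from the Euler integral $n^{-k}=\Gamma(k)^{-1}\int_0^\infty t^{k-1}e^{-nt}\,dt$, the same device used above to represent $\zeta(r)$ and $\zeta(\overline r)$. Applying it to \emph{both} reciprocal powers in the defining series
\begin{align*}
\zeta(\overline r,s)&=\sum_{1\le j<m}\frac{(-1)^j}{j^r m^s},\qquad \zeta(r,\overline s)=\sum_{1\le j<m}\frac{(-1)^m}{j^r m^s},\\
\zeta(\overline r,\overline s)&=\sum_{1\le j<m}\frac{(-1)^{j+m}}{j^r m^s},
\end{align*}
with the variable $u$ attached to $j^{-r}$ and $t$ attached to $m^{-s}$, and then interchanging the (order of) summation with the integration, reduces each identity to the evaluation of an elementary double series; for instance
\begin{align*}
\zeta(\overline r,s)=\frac{1}{\Gamma(r)\Gamma(s)}\int_0^\infty\!\!\int_0^\infty u^{r-1}t^{s-1}\Bigl(\sum_{1\le j<m}(-1)^j e^{-ju-mt}\Bigr)\,du\,dt,
\end{align*}
and analogously with $(-1)^j$ replaced by $(-1)^m$ or by $(-1)^{j+m}$ for the other two sums.

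Next I would compute each inner series by summing over $m>j$ first and then over $j\ge 1$. The $m$-sum is geometric and equals $\sum_{m>j}e^{-mt}=e^{-jt}/(e^t-1)$ when the outer index is unbarred, and $\sum_{m>j}(-1)^m e^{-mt}=-(-1)^j e^{-jt}/(e^t+1)$ when it is barred; the remaining sum over $j$ is again geometric, with ratio $\pm e^{-(t+u)}$ depending on the total power of $-1$ accumulated, and contributes a factor $1/(e^{t+u}-1)$ or $-1/(e^{t+u}+1)$. Collecting all the signs gives
\begin{align*}
\sum_{1\le j<m}(-1)^j e^{-ju-mt}&=\frac{-1}{(e^t-1)(e^{t+u}+1)},\\
\sum_{1\le j<m}(-1)^m e^{-ju-mt}&=\frac{1}{(e^t+1)(e^{t+u}+1)},\\
\sum_{1\le j<m}(-1)^{j+m} e^{-ju-mt}&=\frac{-1}{(e^t+1)(e^{t+u}-1)},
\end{align*}
and substituting these back into the integral representation above yields exactly \eqref{eq19}, \eqref{eq20} and \eqref{eq21}.

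The only step requiring genuine care --- and the one I expect to be the main obstacle --- is the justification of the interchange of $\sum$ and $\int$ together with the reordering of the double sum, in particular at the boundary of the range. When $r\ge 2$ and $s\ge 2$ the integrand $u^{r-1}t^{s-1}e^{-ju-mt}$ is nonnegative, so Tonelli's theorem legitimizes every rearrangement simultaneously. For $r=1$ (and for $s=1$ in \eqref{eq20}, \eqref{eq21}) I would instead check directly that the closed-form integrand is absolutely integrable: near $t=0$ one has $t^{s-1}/(e^t-1)\sim t^{s-2}$, which is integrable precisely for $s\ge 2$ --- matching the convergence condition on $\zeta(\overline r,s)$ --- while $t^{s-1}/(e^t+1)$ is integrable for $s\ge 1$; near $u=0$ the factor $u^{r-1}$ is integrable for every $r\ge 1$ and $1/(e^{t+u}\pm 1)$ is bounded; and at infinity the exponential factors dominate the polynomial ones. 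Given this integrability, the inner $m$-summation preceding the $j$-summation is justified by dominated convergence, and the exchange of $\sum_j$ with $\int$ follows in the same way, so the formal computation above is valid throughout the range where the sums converge.
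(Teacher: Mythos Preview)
Your proof is correct and follows essentially the same route as the paper: apply the Euler integral $n^{-k}=\Gamma(k)^{-1}\int_0^\infty t^{k-1}e^{-nt}\,dt$ to both factors in the defining double series, sum the resulting geometric series in $m$ and then in $j$, and read off the closed-form integrand. The paper does the reindexing $m\mapsto m+j$ before summing, whereas you sum $\sum_{m>j}$ directly, but this is the same computation; the paper also omits the convergence discussion you supply, so your version is in fact the more complete one. (One minor remark: your appeal to Tonelli when $r\ge 2$, $s\ge 2$ is slightly misworded---the point is not that the integrand is nonnegative, which it is for all $r,s\ge 1$, but that the \emph{absolute} double series $\sum_{1\le j<m}j^{-r}m^{-s}=\zeta(r,s)$ is finite, which already holds for $r\ge 1$, $s\ge 2$ and then gives Fubini for the signed series; your direct-integrability argument covers the remaining cases anyway.)
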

\begin{proof}
\begin{align*}
\zeta\left(\overline{r}, s\right)=&\sum_{m=1}^{\infty} \frac{1}{m^s}\sum_{j=1}^{m-1}\frac{(-1)^j}{j^r}\\
=&\sum_{j=1}^{\infty}\frac{(-1)^j}{j^r}\sum_{m=1}^{\infty} \frac{1}{(m+j)^s}\\
=&\frac{1}{\Gamma(r)\Gamma(s)}\int_0^{\infty}\int_0^{\infty}t^{s-1}u^{r-1} \sum_{j=1}^{\infty}(-1)^j\sum_{m=1}^{\infty} e^{-t(m+j)}e^{-uj}dudt\\
=&-\frac{1}{\Gamma(r)\Gamma(s)}\int_0^{\infty}\int_0^{\infty}\frac{t^{s-1}u^{r-1}}{(e^t-1)(e^{t+u}+1) }dudt.
\end{align*}
The other two formulas are proved in the same way.
\end{proof}

For the usual double Euler sum, the shuffle relation reads as
\begin{align}
\zeta(r)\zeta(s)=&\sum_{j=1}^{k-1} \begin{pmatrix}j-1\\r-1\end{pmatrix} \zeta(k-j, j)+\sum_{j=1}^{k-1} \begin{pmatrix}j-1\\s-1\end{pmatrix} \zeta(k-j, j)
\end{align}when $r\geq 1$, $s\geq 2$ and $r+s=k$.
Using the integral representations given in Lemma \ref{lemma5}, we can prove the following shuffle relations for alternating double Euler sums:
\begin{theorem}\label{shuffle}
If $r\geq 1$, $s\geq 2$, $r+s=k$, then
\begin{align}
\zeta(\overline{r})\zeta(s)=&\sum_{j=1}^{k-1} \begin{pmatrix}j-1\\r-1\end{pmatrix} \zeta(\overline{k-j}, \overline{j})+\sum_{j=1}^{k-1} \begin{pmatrix}j-1\\s-1\end{pmatrix} \zeta(\overline{k-j}, j).\label{eq22}
\end{align}
If $r, s\geq 1$, $r+s=k$, then
\begin{align}\label{eq23}
\zeta(\overline{r})\zeta(\overline{s})=&\sum_{j=1}^{k-1}  \begin{pmatrix}j-1\\r-1\end{pmatrix} \zeta(k-j, \overline{j})+\sum_{j=1}^{k-1}  \begin{pmatrix}j-1\\s-1\end{pmatrix} \zeta(k-j, \overline{j}).
\end{align}
\end{theorem}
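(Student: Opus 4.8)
The plan is to establish the shuffle relations \eqref{eq22} and \eqref{eq23} by the classical change-of-variables trick applied to the double integral representations of Lemma \ref{lemma5}. The starting point is that the product $\zeta(\overline{r})\zeta(s)$ can be written as a product of two single integrals, which becomes a double integral over the positive quadrant:
\begin{align*}
\zeta(\overline{r})\zeta(s)=-\frac{1}{\Gamma(r)\Gamma(s)}\int_0^{\infty}\int_0^{\infty}\frac{t^{r-1}u^{s-1}}{(e^t+1)(e^u-1)}\,dt\,du.
\end{align*}
The idea is to split the quadrant $\{t,u>0\}$ into the two regions $\{t<u\}$ and $\{t>u\}$ (the diagonal has measure zero), and in each region perform the linear substitution that matches the argument of one of the integrands in Lemma \ref{lemma5}, namely $(t,u)\mapsto(t,u-t)$ on the region $t<u$ (so the new variables are $t$ and $v=u-t$, with $e^u=e^{t+v}$), and the symmetric substitution $(t,u)\mapsto(t-u,u)$ on $t>u$. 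Under the first substitution the factor $t^{r-1}u^{s-1}=t^{r-1}(t+v)^{s-1}$ is expanded by the binomial theorem, $t^{r-1}(t+v)^{s-1}=\sum_{i}\binom{s-1}{i}t^{r-1+i}v^{s-1-i}$; re-indexing via $j=r+i$ converts the resulting integrals into a combination of the integral representations \eqref{eq19}--\eqref{eq21}, producing the $\binom{j-1}{r-1}$-weighted sum of terms $\zeta(\overline{k-j},\overline{j})$. The region $t>u$ contributes, after the mirror substitution and binomial expansion of $(u+w)^{r-1}u^{s-1}$, the $\binom{j-1}{s-1}$-weighted sum of terms $\zeta(\overline{k-j},j)$.

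For the second identity \eqref{eq23}, the same scheme is run with $\zeta(\overline{r})\zeta(\overline{s})$ written as
\begin{align*}
\zeta(\overline{r})\zeta(\overline{s})=\frac{1}{\Gamma(r)\Gamma(s)}\int_0^{\infty}\int_0^{\infty}\frac{t^{r-1}u^{s-1}}{(e^t+1)(e^u+1)}\,dt\,du,
\end{align*}
and the two triangular regions, after the linear changes of variables, are matched against the integral representation \eqref{eq20} for $\zeta(r,\overline{s})$ (in the form with $(e^t+1)(e^{t+u}+1)$ in the denominator). By symmetry of the integrand in $t$ and $u$ the two regions give sums with the same $\zeta(k-j,\overline{j})$ terms but binomial weights $\binom{j-1}{r-1}$ and $\binom{j-1}{s-1}$ respectively, which is exactly \eqref{eq23}. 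One must keep careful track of the signs coming from the prefactors of Lemma \ref{lemma5}: in \eqref{eq22} both sides carry a net minus sign from $\zeta(\overline{r})$ on the left and from the $\zeta(\overline{\,\cdot\,},\overline{\,\cdot\,})$ and $\zeta(\overline{\,\cdot\,},\cdot)$ terms on the right, while in \eqref{eq23} the left side $\zeta(\overline{r})\zeta(\overline{s})$ is positive and matches the positive integrals for $\zeta(k-j,\overline{j})$.

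The main technical obstacle is the usual one in this circle of ideas: justifying the interchange of summation and integration (hence the manipulation of the Euler--Gamma integral representation) and, more delicately, handling the boundary case $r=1$ or the divergent tails. The factor $1/(e^t+1)$ decays exponentially, so the integral for $\zeta(\overline{r})$ converges even at $r=1$, and the integrals in Lemma \ref{lemma5} involving $(e^t+1)$ or $(e^{t+u}+1)$ in the denominator are likewise absolutely convergent for all $r,s\geq 1$; the only genuinely divergent object is $\zeta(\overline{r},s)$ with $s=1$, which has already been assigned a symbolic value via $T$ in the text, so for \eqref{eq22} we restrict to $s\geq 2$ exactly as stated and no symbolic regularization is needed. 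Thus once absolute convergence is noted, Fubini and the binomial expansion are routine, and the re-indexing $j=r+i$ (respectively $j=s+i$) together with the identity $\binom{s-1}{j-r}=\binom{j-1}{s-1}$ valid in the relevant range — wait, more precisely $\binom{s-1}{j-r}$ is what appears and one checks it equals the stated coefficient after accounting for which variable plays which role — completes the bookkeeping. I would present the computation for \eqref{eq22} in full and remark that \eqref{eq23} follows \emph{mutatis mutandis}.
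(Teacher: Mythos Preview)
Your approach is essentially identical to the paper's: write $\zeta(\overline{r})\zeta(s)$ (resp.\ $\zeta(\overline{r})\zeta(\overline{s})$) as a double integral over the quadrant, split into $\{t<u\}$ and $\{t>u\}$, perform the linear substitutions $u\mapsto u-t$ (resp.\ $t\mapsto t-u$), binomially expand the resulting $(t+u)^{s-1}$ or $(t+u)^{r-1}$, and match each term against the integral representations of Lemma~\ref{lemma5}. The one place where you hesitate --- the passage from $\binom{s-1}{i}$ to $\binom{j-1}{r-1}$ --- is not a raw binomial identity but comes from absorbing the Gamma prefactors: $\binom{s-1}{i}\frac{\Gamma(s-i)\Gamma(r+i)}{\Gamma(r)\Gamma(s)}=\binom{r+i-1}{r-1}$, which with $j=r+i$ gives $\binom{j-1}{r-1}$, exactly as the paper computes.
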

\begin{proof}
\begin{align*}
&\zeta(\overline{r})\zeta(s)\\=&-\frac{1}{\Gamma(r) \Gamma(s)}\int_0^{\infty}\frac{t^{r-1}}{e^t+1}dt\int_0^{\infty}\frac{u^{s-1}}{e^u-1}du\\
=&-\frac{1}{\Gamma(r) \Gamma(s)}\int_0^{\infty}\int_t^{\infty}\frac{t^{r-1}u^{s-1}}{(e^t+1)(e^u-1)}dudt-\frac{1}{\Gamma(r) \Gamma(s)}\int_0^{\infty}\int_u^{\infty}\frac{t^{r-1}u^{s-1}}{(e^t+1)(e^u-1)}dtdu\\
=&-\frac{1}{\Gamma(r)\Gamma(s)}\int_0^{\infty}\int_0^{\infty}\frac{t^{r-1}(t+u)^{s-1}}{(e^t+1)(e^{t+u}-1)}dsdt-\frac{1}{\Gamma(r)\Gamma(s)}
\int_0^{\infty}\int_0^{\infty}\frac{(t+u)^{r-1}u^{s-1}}{(e^{t+u}+1)(e^u-1)}dtdu\\
=&\sum_{j=0}^{s-1}\begin{pmatrix}s-1\\j\end{pmatrix}\frac{\Gamma(s-j)\Gamma(r+j)}{\Gamma(r) \Gamma(s)}\zeta(\overline{s-j},\overline{r+j})+ \sum_{j=0}^{r-1}\begin{pmatrix}r-1\\j\end{pmatrix}\frac{\Gamma(r-j)\Gamma(s+j)}{\Gamma(r) \Gamma(s)}\zeta(\overline{r-j}, s+j)\\
=&\sum_{j=0}^{s-1}\begin{pmatrix}r+j-1\\r-1\end{pmatrix} \zeta(\overline{s-j},\overline{r+j})+ \sum_{j=0}^{r-1}\begin{pmatrix}s+j-1\\s-1\end{pmatrix} \zeta(\overline{r-j}, s+j)\\
=&\sum_{j=1}^{k-1} \begin{pmatrix}j-1\\r-1\end{pmatrix} \zeta(\overline{k-j}, \overline{j})+\sum_{j=1}^{k-1} \begin{pmatrix}j-1\\s-1\end{pmatrix} \zeta(\overline{k-j}, j).
\end{align*}
The  formula \eqref{eq23} is proved in the same way.
\end{proof}
Before translating these two identities into identities for generating functions, we need to deal with the case $\zeta(\overline{r})\zeta(s)$ with $s=1$.
\begin{lemma}If $k\geq 3$,
\begin{align}
\sum_{s=2}^{k-1}\zeta(k-s, s)=&\zeta(k),\label{eq24}\\
\sum_{s=2}^{k-1}\zeta(\overline{k-s}, s)=&\zeta(\overline{k})+\zeta(1, \overline{k-1})-\zeta(\overline{1}, \overline{k-1}).\label{eq25}
\end{align}
\end{lemma}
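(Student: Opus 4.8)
The plan is to prove \eqref{eq24} and \eqref{eq25} by one and the same elementary computation, working directly with the defining series. Throughout write $H_m=\sum_{i=1}^m 1/i$ for the $m$-th harmonic number. The starting point is the partial-fraction identity
\[
\sum_{s=2}^{k-1}\frac{1}{j^{k-s}(j+l)^s}=\frac{1}{j^{k-1}}\left(\frac1l-\frac1{j+l}\right)-\frac{1}{l(j+l)^{k-1}},\qquad j,l\ge 1,\ k\ge 3,
\]
which follows by summing the geometric progression $\sum_{s=2}^{k-1}a^{k-s}b^s=ab^2(a^{k-2}-b^{k-2})/(a-b)$ with $a=1/j$, $b=1/(j+l)$ and using $a-b=l\,ab$.

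For \eqref{eq24} I would expand $\zeta(k-s,s)=\sum_{j,l\ge1}j^{-(k-s)}(j+l)^{-s}$, interchange the summations, and apply the identity above. Summing over $l$ for each fixed $j$, using the telescoping evaluation $\sum_{l\ge1}\bigl(1/l-1/(j+l)\bigr)=H_j$, gives
\[
\sum_{s=2}^{k-1}\zeta(k-s,s)=\sum_{j\ge1}\frac{H_j}{j^{k-1}}-\sum_{j,l\ge1}\frac{1}{l(j+l)^{k-1}}.
\]
Writing $H_j=H_{j-1}+1/j$ shows the first sum equals $\zeta(1,k-1)+\zeta(k)$, while re-indexing $n=j+l$ shows the second sum equals $\sum_{n\ge2}H_{n-1}\,n^{1-k}=\zeta(1,k-1)$; the two copies of $\zeta(1,k-1)$ cancel, leaving $\zeta(k)$. (This is, of course, Euler's classical sum formula, so one could alternatively just cite it; but it comes out of the same computation.)

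For \eqref{eq25} I would run exactly the same argument with the extra factor $(-1)^j$ carried along, obtaining
\[
\sum_{s=2}^{k-1}\zeta(\overline{k-s},s)=\sum_{j\ge1}(-1)^j\frac{H_j}{j^{k-1}}-\sum_{j,l\ge1}\frac{(-1)^j}{l(j+l)^{k-1}}.
\]
The first sum is $\zeta(1,\overline{k-1})+\zeta(\overline k)$ exactly as before. For the second, set $n=j+l$ and then substitute $i=n-j$; since $(-1)^{\,n-i}=(-1)^n(-1)^i$, the inner sum becomes $(-1)^n\sum_{i=1}^{n-1}(-1)^i/i$, so the double sum equals $\sum_{n\ge2}(-1)^n n^{1-k}\sum_{i=1}^{n-1}(-1)^i/i=\zeta(\overline 1,\overline{k-1})$. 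Collecting the three pieces yields precisely $\zeta(\overline k)+\zeta(1,\overline{k-1})-\zeta(\overline 1,\overline{k-1})$.

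The only point requiring care is the bookkeeping around convergence: the partial-fraction split produces pieces such as $\sum_l 1/(j^{k-1}l)$ that diverge on their own, so the limit must be taken on the truncated $l$-sum before the telescoping part is separated from the convergent tail $\sum_l 1/(l(j+l)^{k-1})$; and the interchange of $\sum_{j,l}$ with $\sum_{s}$ must be justified by absolute convergence, which is automatic for $k\ge3$ and, for \eqref{eq25}, can simply be read off from \eqref{eq24}. One could instead push the whole computation through the Gamma-integral representations used elsewhere in the paper, but that route needs the regularization of a divergent double integral, so the series manipulation above seems the cleanest.
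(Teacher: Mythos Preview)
Your proof is correct and follows essentially the same route as the paper: sum the geometric progression in $s$, split the result by partial fractions into a telescoping piece yielding $\zeta(1,k-1)+\zeta(k)$ (respectively $\zeta(1,\overline{k-1})+\zeta(\overline k)$) and a second piece identified as $\zeta(1,k-1)$ (respectively $\zeta(\overline 1,\overline{k-1})$) after the change of variable $n=j+l$. The only cosmetic difference is that the paper writes the partial-fraction step in the $(m,n)$ variables and carries out the telescoping as an explicit $L\to\infty$ limit with a bounded tail, whereas you package it as the identity $\sum_{l\ge1}(1/l-1/(j+l))=H_j$; since the combined summand $1/l-1/(j+l)$ is positive and the resulting double sums converge absolutely for $k\ge3$, your shortcut is justified and your convergence caveat is, if anything, slightly over-cautious.
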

\begin{proof}
By definition,
\begin{align*}
\sum_{s=2}^{k-1}\zeta(k-s, s)=&\sum_{1\leq m<n}\sum_{s=2}^{k-1}\frac{1}{m^{k-s}}\frac{1}{n^s}\\
=&\sum_{1\leq m<n} \frac{1}{m^{k}}\frac{\di \frac{m^2}{n^2}\left(1-\left(\frac{m}{n}\right)^{k-2}\right)}{\di 1-\frac{m}{n}}\\
=&\sum_{n=2}^{\infty}\sum_{m=1}^{n-1} \frac{1}{m^{k-2}n(n-m)}-\sum_{n=2}^{\infty}\sum_{m=1}^{n-1}\frac{1}{n^{k-1}(n-m)}.
\end{align*}Notice that
\begin{align*}
\sum_{n=2}^{\infty}\sum_{m=1}^{n-1}\frac{1}{n^{k-1}(n-m)}=&\sum_{n=2}^{\infty}\sum_{m=1}^{n-1}\frac{1}{n^{k-1}m}=\zeta(1, k-1).
\end{align*}On the other hand,
\begin{align*}
&\sum_{n=2}^{\infty}\sum_{m=1}^{n-1}\frac{1}{m^{k-2}n(n-m)}\\=&\sum_{m=1}^{\infty}\sum_{n=1}^{\infty} \frac{1}{m^{k-2}n(n+m)}\\
=&\lim_{L\rightarrow\infty}\sum_{m=1}^{\infty}\sum_{n=1}^L\frac{1}{m^{k-1}}\left(\frac{1}{n}-\frac{1}{n+m}\right)\\
=&\lim_{L\rightarrow\infty}\sum_{m=1}^{\infty} \frac{1}{m^{k-1}}\left(1+\frac{1}{2}+\ldots+\frac{1}{m}-\frac{1}{L+1}-\ldots-\frac{1}{L+m}\right)\\
=&\sum_{m=1}^{\infty} \sum_{n=1}^{m-1}\frac{1}{m^{k-1}}\frac{1}{n}+\sum_{m=1}^{\infty} \frac{1}{m^{k}}-\lim_{L\rightarrow\infty}\sum_{m=1}^{\infty} \frac{1}{m^{k-1}}\left( \frac{1}{L+1}+\ldots+\frac{1}{L+m}\right).
\end{align*}
The first two terms give
\begin{align*}
\sum_{m=1}^{\infty} \sum_{n=1}^{m-1}\frac{1}{m^{k-1}}\frac{1}{n}+\sum_{m=1}^{\infty} \frac{1}{m^{k}}=&\zeta(1, k-1)+\zeta(k).
\end{align*}
For the term that involves $L$, notice that
\begin{align*}
0\leq & \sum_{m=1}^{\infty} \frac{1}{m^{k-1}}\left( \frac{1}{L+1}+\ldots+\frac{1}{L+m}\right) \\
=&  \sum_{m=1}^{\infty} \frac{1}{m^{k-1}}\sum_{s=1}^m\frac{1}{L+s} \\
=& \sum_{s=1}^{\infty}\frac{1}{L+s}\sum_{m=s}^{\infty}\frac{1}{m^{k-1}} \\
 \leq  & \sum_{s=1}^{\infty}\frac{1}{L+s}\left(\frac{1}{s^{k-1}}+\int_{s}^{\infty}\frac{1}{x^{k-1}}dx\right)\\
 =&\sum_{s=1}^{\infty}\frac{1}{L+s} \frac{1}{s^{k-1}}+\frac{1}{k-2}\sum_{s=1}^{\infty}\frac{1}{L+s} \frac{1}{s^{k-2}}\\
\leq & \frac{1}{L}\sum_{s=1}^{\infty} \frac{1}{s^{k-1}}+\frac{1}{k-2}\sum_{s=1}^{\infty}\frac{1}{2\sqrt{Ls}} \frac{1}{s^{k-2}}.
\end{align*}
Hence, this term goes to 0 uniformly as $L\rightarrow\infty$.
 This proves that
 $$\sum_{s=2}^{k-1}\zeta(k-s, s)=\zeta(k).$$

Equation \eqref{eq25} is proved in a similarly way. We have
\begin{align*}
\sum_{s=2}^{k-1}\zeta(\overline{k-s}, s)
=&\sum_{n=2}^{\infty}\sum_{m=1}^{n-1} \frac{(-1)^m}{m^{k-2}n(n-m)}-\sum_{n=2}^{\infty}\sum_{m=1}^{n-1}  \frac{(-1)^m}{n^{k-1}(n-m)},
\end{align*}
\begin{align*}
\sum_{n=2}^{\infty}\sum_{m=1}^{n-1}  \frac{(-1)^m}{n^{k-1}(n-m)}=&\sum_{n=2}^{\infty}\sum_{m=1}^{n-1}  \frac{(-1)^{n-m}}{n^{k-1}m}=\zeta(\overline{1}, \overline{k-1}),
\end{align*}
\begin{align*}
\sum_{n=2}^{\infty}\sum_{m=1}^{n-1}  \frac{(-1)^m}{m^{k-2}n(n-m)}
=&\sum_{m=2}^{\infty} \sum_{n=1}^{m-1}\frac{(-1)^m}{m^{k-1}}\frac{1}{n}+\sum_{m=1}^{\infty} \frac{(-1)^m}{m^{k}}\\&-\lim_{L\rightarrow\infty}\sum_{m=1}^{\infty} \frac{(-1)^m}{m^{k-1}}\left( \frac{1}{L+1}+\ldots+\frac{1}{L+m}\right).
\end{align*}The first two terms give
\begin{align*}
\sum_{m=2}^{\infty} \sum_{n=1}^{m-1}\frac{(-1)^m}{m^{k-1}}\frac{1}{n}+\sum_{m=1}^{\infty} \frac{(-1)^m}{m^{k}}=\zeta(1,\overline{k-1})+\zeta(\overline{k}).
\end{align*}
For the term that involves $L$, since
\begin{align*}
&\left| \sum_{m=1}^{\infty} \frac{(-1)^m}{m^{k-1}}\left( \frac{1}{L+1}+\ldots+\frac{1}{L+m}\right)\right|
\leq  \sum_{m=1}^{\infty} \frac{1}{m^{k-1}}\left( \frac{1}{L+1}+\ldots+\frac{1}{L+m}\right).
\end{align*}It follows from the previous estimate that this term goes to 0 as $L\rightarrow \infty$.
This proves \eqref{eq25}.
\end{proof}
\vfill
\pagebreak
\begin{corollary}
The relation \eqref{eq22} holds formally when $s=1$.
\end{corollary}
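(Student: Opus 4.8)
The plan is to simply specialize the already-established identity \eqref{eq22} to the boundary case $s=1$, $r=k-1$ (with $k\geq 3$, which is the range in which \eqref{eq25} has been proved), and check that, after using the symbolic conventions $\zeta(1)=T$ and $\zeta(\overline{k-1},1)=\zeta(\overline{k-1})T-\zeta(1,\overline{k-1})-\zeta(\overline{k})$, the two sides agree. No analytic input beyond what is already in the excerpt is needed; the content is purely a bookkeeping verification that the symbolic definitions have been chosen consistently.

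Concretely, I would proceed as follows. Set $s=1$ and $r=k-1$ on the right-hand side of \eqref{eq22}. In the first sum the binomial coefficient is $\binom{j-1}{r-1}=\binom{j-1}{k-2}$, which vanishes for all $1\le j\le k-1$ except $j=k-1$, so the first sum collapses to the single term $\zeta(\overline{1},\overline{k-1})$. In the second sum the binomial coefficient is $\binom{j-1}{s-1}=\binom{j-1}{0}=1$ for every $j$, so that sum is $\sum_{j=1}^{k-1}\zeta(\overline{k-j},j)$. I would then split off its $j=1$ term, $\zeta(\overline{k-1},1)$, which is convergent only symbolically and by definition equals $\zeta(\overline{k-1})T-\zeta(1,\overline{k-1})-\zeta(\overline{k})$, while the remaining terms $\sum_{j=2}^{k-1}\zeta(\overline{k-j},j)$ are genuine convergent Euler sums to which \eqref{eq25} applies, giving $\zeta(\overline{k})+\zeta(1,\overline{k-1})-\zeta(\overline{1},\overline{k-1})$.

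Adding the three contributions, the terms $\zeta(\overline{k})$, $\zeta(1,\overline{k-1})$ and $\zeta(\overline{1},\overline{k-1})$ cancel in pairs, and what survives is $\zeta(\overline{k-1})T=\zeta(\overline{k-1})\zeta(1)$, which is precisely the left-hand side of \eqref{eq22} specialized to $s=1$. Hence \eqref{eq22} holds formally when $s=1$.

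I do not expect any genuine obstacle here; the only thing that requires care is keeping track of which quantities are honestly convergent and which exist only as formal expressions in $T$, and invoking \eqref{eq25} with the correct summation range $2\le j\le k-1$. In fact the computation makes transparent that the symbolic definition of $\zeta(\overline{r},1)$ adopted just before Theorem \ref{R1} is exactly the one forced by requiring \eqref{eq22} (equivalently the stuffle relation \eqref{eq15}) to persist at $s=1$.
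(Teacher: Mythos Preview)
Your proposal is correct and follows essentially the same route as the paper: both specialize \eqref{eq22} to $s=1$, $r=k-1$, reduce the first sum to the single term $\zeta(\overline{1},\overline{k-1})$, and then observe that the resulting identity is equivalent to \eqref{eq25} once the symbolic stuffle definition of $\zeta(\overline{k-1},1)$ is unpacked. The only cosmetic difference is that the paper rewrites the left-hand side via the stuffle relation and compares to \eqref{eq25}, whereas you evaluate the right-hand side directly using \eqref{eq25}; the underlying computation is identical.
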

\begin{proof}
Put $s=1$ and $r=k-1$ into \eqref{eq22} gives
\begin{align}\label{eq26}
\zeta(\overline{k-1})\zeta(1)=& \zeta(\overline{1}, \overline{k-1})+\sum_{j=1}^{k-1}  \zeta(\overline{k-j}, j).
\end{align}
Since formally
$$\zeta(\overline{k-1})\zeta(1)=\zeta(\overline{k-1}, 1)+\zeta(1, \overline{k-1})+\zeta(\overline{k})$$
Eq. \eqref{eq26} is equivalent to
\begin{align*}
\sum_{j=2}^{k-1}  \zeta(\overline{k-j}, j)=\zeta(1, \overline{k-1})+\zeta(\overline{k})-\zeta(\overline{1}, \overline{k-1}),
\end{align*}which is equation \eqref{eq25}. This proves the corollary.
\end{proof}

\begin{remark}
Setting $r=1$ in \eqref{eq22}, we have
\begin{align*}
\zeta(k-1)\zeta(\overline{1})=&\sum_{j=1}^{k-1}   \zeta(\overline{k-j}, \overline{j})+\zeta(\overline{1}, k-1).
\end{align*}
Together with the fact that
$$\zeta(k-1)\zeta(\overline{1})-\zeta(\overline{1}, k-1)=\zeta(\overline{k})+\zeta(k-1, \overline{1}),$$ we obtain
\begin{align}\label{eq27}
\sum_{s=2}^{k-1}   \zeta(\overline{k-s}, \overline{s})=&\zeta(\overline{k})+\zeta(k-1,\overline{1})-\zeta(\overline{k-1}, \overline{1}).
\end{align}
Similarly, setting $r=1$ in \eqref{eq23} gives
\begin{align}\label{eq28}
\sum_{s=2}^{k-1}   \zeta(k-s, \overline{s})=&\zeta(\overline{k})+ \zeta(\overline{k-1}, \overline{1})+\zeta(\overline{1}, \overline{k-1})-\zeta(k-1, \overline{1})-\zeta(1, \overline{k-1}).
\end{align}
Eq. \eqref{eq25}, \eqref{eq27} and \eqref{eq28} are summation formulas of alternating double Euler sums. They are generalizations of \eqref{eq24}.
\end{remark}

The double shuffle relations in Theorem \ref{shuffle} give the following identities of generating functions.
\begin{theorem}\label{R2}
\begin{align}
F_1(x,y)=&G_1(x,x+y)+G_3(y, x+y),\label{eq29}\\
F_2(x,y)=&G_2(x,x+y)+G_2(y, x+y),\label{eq30}.
\end{align}
\end{theorem}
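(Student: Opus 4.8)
The plan is to translate the shuffle relations \eqref{eq22} and \eqref{eq23} directly into the claimed generating-function identities by multiplying each relation by $x^{r-1}y^{s-1}$ and summing over all pairs $r,s\ge 1$ with $r+s=k$. The left-hand sides become $F_1(x,y)$ and $F_2(x,y)$ by definition. The one analytic subtlety occurs in \eqref{eq29}: the sum defining $F_1$ includes the term $s=1$, where $\zeta(s)=\zeta(1)$ is only the formal symbol $T$ and where $G_1$ involves $\zeta(\overline{k-1},1)$, again defined symbolically through $T$; here one invokes the Corollary, which asserts that \eqref{eq22} continues to hold formally for $s=1$. For \eqref{eq30} no such care is needed, since every term appearing in \eqref{eq23} and in $F_2$ is genuinely convergent.

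The computational heart is the elementary identity that, for $1\le j\le k-1$,
\begin{align*}
\sum_{\substack{r,s\ge 1\\ r+s=k}}\binom{j-1}{r-1}x^{r-1}y^{s-1}=y^{k-1-j}(x+y)^{j-1},\qquad
\sum_{\substack{r,s\ge 1\\ r+s=k}}\binom{j-1}{s-1}x^{r-1}y^{s-1}=x^{k-1-j}(x+y)^{j-1}.
\end{align*}
Both follow from the binomial theorem: setting $i=r-1$ in the first sum, the coefficient $\binom{j-1}{i}$ vanishes once $i>j-1$, so the sum truncates to $\sum_{i=0}^{j-1}\binom{j-1}{i}x^i y^{k-2-i}=y^{k-1-j}\sum_{i=0}^{j-1}\binom{j-1}{i}x^i y^{j-1-i}=y^{k-1-j}(x+y)^{j-1}$, and the second identity is the same computation with $x$ and $y$ swapped. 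Applying this to the first sum on the right of \eqref{eq22} (after exchanging the order of summation over $r,s$ and over $j$) gives $\sum_{j=1}^{k-1}\zeta(\overline{k-j},\overline{j})\,y^{k-1-j}(x+y)^{j-1}$, which upon the reindexing $a=k-j$, $b=j$ is exactly $\sum_{a+b=k}\zeta(\overline{a},\overline{b})\,y^{a-1}(x+y)^{b-1}=G_3(y,x+y)$; the second sum on the right of \eqref{eq22} becomes $\sum_{j=1}^{k-1}\zeta(\overline{k-j},j)\,x^{k-1-j}(x+y)^{j-1}=G_1(x,x+y)$. This yields \eqref{eq29}. Running the identical argument with \eqref{eq23}, the first sum produces $\sum_{j=1}^{k-1}\zeta(k-j,\overline{j})\,y^{k-1-j}(x+y)^{j-1}=G_2(y,x+y)$ and the second $G_2(x,x+y)$, giving \eqref{eq30}.

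The step I expect to require the most care is not the algebra but the bookkeeping of the boundary term $s=1$ in the proof of \eqref{eq29}: one must check that the formal symbol $T=\zeta(1)$ enters the two sides in a compatible way, which is precisely what the Corollary (equivalently, the summation formula \eqref{eq25}) guarantees. Once that is granted, the remainder is a routine application of the binomial theorem together with the substitution $j\mapsto k-j$.
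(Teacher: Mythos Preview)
Your proposal is correct and follows essentially the same approach as the paper: multiply the shuffle relation \eqref{eq22} by $x^{r-1}y^{s-1}$, sum over $r+s=k$, interchange the sums over $(r,s)$ and over $j$, and collapse the inner sum via the binomial theorem to produce the monomials $x^{k-1-j}(x+y)^{j-1}$ and $y^{k-1-j}(x+y)^{j-1}$; your treatment of the $s=1$ boundary term via the Corollary is exactly the point of that corollary, though the paper leaves this implicit in the proof of Theorem~\ref{R2} itself.
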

\begin{proof}
Eq. \eqref{eq22} gives
\begin{align*}
F_1(x,y)=&\sum_{\substack{r,s\geq 1\\r+s=k}}\zeta(\overline{r})\zeta(s)x^{r-1}y^{s-1}\\
=&\sum_{r=1}^{k-1}\sum_{j=r}^{k-1} \begin{pmatrix}j-1\\r-1\end{pmatrix} \zeta(\overline{k-j}, \overline{j})x^{r-1}y^{k-r-1}+\sum_{s=1}^{k-1}\sum_{j=s}^{k-1} \begin{pmatrix}j-1\\s-1\end{pmatrix} \zeta(\overline{k-j}, j)x^{k-s-1}y^{s-1}\\
=&\sum_{j=1}^{k-1}\left[\sum_{r=1}^j\begin{pmatrix}j-1\\r-1\end{pmatrix}x^{r-1}y^{j-r}\right]\zeta(\overline{k-j}, \overline{j})y^{k-j-1}
\\&+\sum_{j=1}^{k-1}\left[\sum_{s=1}^j\begin{pmatrix}j-1\\s-1\end{pmatrix}x^{j-s}y^{s-1}\right]\zeta(\overline{k-j}, j)x^{k-j-1}\\
=&\sum_{j=1}^{k-1}\zeta(\overline{k-j}, \overline{j})y^{k-j-1}(x+y)^{j-1}+\sum_{j=1}^{k-1}\zeta(\overline{k-j}, j)x^{k-j-1}(x+y)^{j-1}\\
=&G_3(y, x+y)+G_1(x, x+y)
\end{align*}
Eq. \eqref{eq30} is proved in the same way.
\end{proof}
From Theorem \ref{R1} and Theorem \ref{R2}, one obtains four relations among the functions $F_1(x, y)$, $F_2(x, y)$, $G_1(x, y)$, $G_2(x, y)$ and $G_3(x, y)$. Our goal is to express $G_1$, $G_2$ and $G_3$ in terms of $F_1$ and $F_2$.
\begin{theorem}
\label{R3}We have the following relations:
\begin{equation}\label{eq31}\begin{split}
G_1(x,y)-G_1(-x,-y)=&F_1(x,y)-F_1(x,-y)-F_2(x-y,y)+F_2(x-y, -y)\\&+F_1(x,x-y)-F_1(-x,x-y)\\&-T_2(x,y)-T_2(x,x-y)-T_1(x-y,-y),
\end{split}\end{equation}
\begin{equation}\label{eq32}\begin{split}
G_2(x,y)-G_2(-x,-y)=&F_1(y,x)-F_1(-y,x)-F_1(y,x-y)+F_1(-y,x-y)\\&+F_2( x, x-y)-F_2(-x,x-y)\\&-T_2(x,y)-T_1(x,x-y)-T_2(x-y,-y),
\end{split}\end{equation}
\begin{equation}\label{eq33}\begin{split}
G_3(x,y)-G_3(-x,-y)=&F_2(x,y)-F_2(x,-y)-F_1(x-y,y)+F_1(x-y, -y)\\
&+F_1(x-y, x)-F_1(x-y, -x)\\&-T_1(x,y)-T_2(x,x-y)-T_2(x-y,-y).
\end{split}\end{equation}
\end{theorem}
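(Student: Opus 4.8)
Theorems \ref{R1} and \ref{R2} furnish four linear relations among the polynomials $G_1,G_2,G_3$ (each homogeneous of degree $k-2$ in $x$ and $y$), and the plan is to solve this linear system by repeatedly composing the relations after suitable substitutions in the variables $(x,y)$. Two features will make this work. First, $F_2$, $T_1$ and $T_2$ are symmetric in $x$ and $y$, which lets one identify several superficially different terms at the end. Second, the substitutions produced by the shuffle relations \eqref{eq29} and \eqref{eq30}, combined with the interchange $x\leftrightarrow y$ coming from the stuffle relations \eqref{eq17} and \eqref{eq18}, can be arranged into two involutions of $(x,y)$ whose composition is $(x,y)\mapsto(-x,-y)$; this is precisely how the antisymmetric combination $G_i(x,y)-G_i(-x,-y)$ arises, and no parity hypothesis on $k$ is needed.

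I would establish \eqref{eq31} first. Replacing $y$ by $y-x$ in \eqref{eq29} gives $G_1(x,y)=F_1(x,y-x)-G_3(y-x,y)$; applying \eqref{eq18} at $(y-x,y)$ and then \eqref{eq29} once more, solved for $G_3$ at the appropriate point, to eliminate $G_3(y-x,y)$ and $G_3(y,y-x)$ in turn, yields the ``two-cycle'' identity
\[
G_1(x,y)+G_1(-x,y-x)=F_1(x,y-x)+F_1(-x,y)-F_2(y-x,y)+T_1(y-x,y),
\]
which is invariant under the involution $(x,y)\mapsto(-x,y-x)$. A parallel three-step chain through \eqref{eq17}, \eqref{eq30}, \eqref{eq17} (now eliminating $G_2(y,x)$ and $G_2(x-y,x)$) yields the second ``two-cycle'' identity
\[
G_1(x,y)+G_1(x,x-y)=F_1(x,y)+F_1(x,x-y)-F_2(y,x-y)-T_2(x,y)-T_2(x,x-y),
\]
invariant under $(x,y)\mapsto(x,x-y)$. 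Since the composition of these two involutions is $(x,y)\mapsto(-x,-y)$, I then evaluate the first identity at $(x,x-y)$ and substitute into the second; the result is $G_1(x,y)-G_1(-x,-y)$ expressed as the difference of the two right-hand sides, and expanding and using $F_2(y,x-y)=F_2(x-y,y)$ together with the symmetry of $T_1$ reorganizes it into exactly \eqref{eq31}.

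Formulas \eqref{eq32} and \eqref{eq33} then drop out of \eqref{eq31} by feeding it back into the stuffle and shuffle relations. From \eqref{eq17}, $G_2(y,x)=F_1(x,y)-T_2(x,y)-G_1(x,y)$; subtracting this identity evaluated at $(-x,-y)$ expresses $G_2(y,x)-G_2(-y,-x)$ through $F_1$, $T_2$ and $G_1(x,y)-G_1(-x,-y)$, and inserting \eqref{eq31} and relabelling $x\leftrightarrow y$ produces \eqref{eq32}. Likewise, from \eqref{eq29} one has $G_3(y,x+y)=F_1(x,y)-G_1(x,x+y)$, in which $G_1(x,x+y)-G_1(-x,-(x+y))$ is the left-hand side of \eqref{eq31} evaluated at $(x,x+y)$; substituting \eqref{eq31} and relabelling $(y,x+y)\mapsto(x,y)$ gives \eqref{eq33}. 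As before, the symmetry of $F_2$, $T_1$ and $T_2$ is what collapses the outcome to the stated nine-term forms.

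Everything here is an identity of polynomials in $x$ and $y$, so there is no analytic subtlety beyond the symbolic convention $\zeta(1)=T$ already dealt with around Lemma \ref{lemma1} and its corollary. The genuine difficulty is organizational rather than conceptual: each chain involves three or more substitutions, every one of which permutes and sign-twists the arguments of $F_1,F_2,T_1,T_2$, so one must track all signs carefully and recognize when two differently written terms coincide by symmetry. The one real decision is to pick, for each $G_i$, a pair of relation-chains that closes back onto the same $G_i$ instead of leaking into another $G_j$ — in particular to notice that the short chains through \eqref{eq29}, \eqref{eq18}, \eqref{eq29} and through \eqref{eq17}, \eqref{eq30}, \eqref{eq17} already suffice for $G_1$; after that, it is bookkeeping.
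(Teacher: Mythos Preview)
Your proof is correct and is essentially the paper's argument: both proceed by alternating the stuffle relations of Theorem \ref{R1} with the shuffle relations of Theorem \ref{R2} until the chain returns to $G_i(-x,-y)$, and your two three-step ``two-cycle'' identities for $G_1$ are precisely the two halves of the paper's single six-step chain \eqref{eq17}$\to$\eqref{eq30}$\to$\eqref{eq17}$\to$\eqref{eq29}$\to$\eqref{eq18}$\to$\eqref{eq29}. Your derivation of \eqref{eq32} and \eqref{eq33} from \eqref{eq31} via one further use of \eqref{eq17} or \eqref{eq29} is a mild shortcut over the paper's ``proved in the same way'', and it does reproduce the stated identities (after the harmless relabelling $(x,y)\mapsto(-x,-y)$, which sends the left-hand side to its negative).
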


\begin{proof}As in \cite{1}, we use the relations in Theorem \ref{R1} and Theorem \ref{R2} alternately. We have
\begin{align*}
G_1(x,y)=&F_1(x,y)-G_2(y,x)-T_2(x,y)\\
=&F_1(x,y)-F_2(x-y, y)+G_2(x-y,x)-T_2(x,y)\\
=&F_1(x,y)-F_2( x-y, y)+F_1(x, x-y)-G_1(x,x-y)-T_2(x,y)-T_2(x, x-y)\end{align*}\begin{align*}
=&F_1(x,y)-F_2( x-y, y)+F_1(x, x-y)-F_1(x,-y)+G_3(-y,x-y)\\
&-T_2(x,y)-T_2(x, x-y)\\
=&F_1(x,y)-F_2( x-y, y)+F_1(x, x-y)-F_1(x,-y)+F_2(-y,x-y)\\
&-G_3(x-y, -y)-T_2(x,y)-T_2(x, x-y)-T_1(-y, x-y)\\
=&F_1(x,y)-F_2( x-y, y)+F_1(x, x-y)-F_1(x,-y)+F_2(-y,x-y)\\
&-F_1(-x,x-y)+G_1(-x,-y)-T_2(x,y)-T_2(x, x-y)-T_1(-y, x-y).
\end{align*}
This proves   \eqref{eq31}. The other two equations can be proved in the same way.
\end{proof}

From Theorem \ref{R3}, we can obtain the main results of this section.
\begin{theorem}
\label{doubleEuler2} If $k=r+s\geq 3$ is an odd positive integer, then
\begin{equation}\label{eq34}\begin{split}
\zeta(\overline{r}, s)
=&-\frac{1}{2}\zeta(\overline{k})
+\frac{1+(-1)^s}{2}\zeta(\overline{r})\zeta(s)
\\&+(-1)^r\sum_{l=0}^{\frac{k-1}{2}}\left[\begin{pmatrix}k-2l-1\\r-1\end{pmatrix}\zeta(\overline{k-2l})
+ \begin{pmatrix}k-2l-1\\s-1\end{pmatrix}\zeta(k-2l)\right]\zeta(\overline{2l}),
\end{split}\end{equation}
\begin{equation}\label{eq35}\begin{split}
\zeta(r, \overline{s})=&-\frac{1}{2} \zeta(\overline{k})+\frac{1+(-1)^s}{2}\zeta(r)\zeta(\overline{s})
\\&+(-1)^r\sum_{l=0}^{\frac{k-1}{2}} \left[\begin{pmatrix}k-2l-1\\r-1\end{pmatrix}\zeta(k-2l) + \begin{pmatrix}k-2l-1\\s-1\end{pmatrix}\zeta(\overline{k-2l})\right]\zeta(\overline{2l}),
\end{split}\end{equation}
\begin{equation}\label{eq36}\begin{split}
\zeta(\overline{r}, \overline{s})=&-\frac{1}{2}\zeta(k)
+\frac{1+(-1)^s}{2}\zeta(\overline{r})\zeta(\overline{s})
\\&+(-1)^r\sum_{l=0}^{\frac{k-1}{2}}\left[\begin{pmatrix}k-2l-1\\r-1\end{pmatrix}\zeta(\overline{k-2l})
+ \begin{pmatrix}k-2l-1\\s-1\end{pmatrix}\zeta(\overline{k-2l})\right]\zeta(2l).
\end{split}\end{equation}
\end{theorem}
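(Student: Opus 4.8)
The plan is to prove Theorem~\ref{doubleEuler2} by reading off the coefficient of $x^{r-1}y^{s-1}$ from each of the three identities \eqref{eq31}--\eqref{eq33} of Theorem~\ref{R3}, exactly as Zagier does to derive \eqref{eq11}. On the left-hand side, since $k=r+s$ is odd the coefficient of $x^{r-1}y^{s-1}$ in $G_i(-x,-y)$ equals $(-1)^{k-2}=-1$ times that in $G_i(x,y)$, so $G_i(x,y)-G_i(-x,-y)$ has coefficient $2\zeta(\overline r,s)$, $2\zeta(r,\overline s)$, $2\zeta(\overline r,\overline s)$ respectively; thus each of \eqref{eq34}--\eqref{eq36} reduces to computing the coefficient of $x^{r-1}y^{s-1}$ on the matching right-hand side and halving.

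On the right-hand side I would split the nine summands into three kinds. (i) The unshifted $F$-pair (for \eqref{eq31} it is $F_1(x,y)-F_1(x,-y)$; for \eqref{eq32}, $F_1(y,x)-F_1(-y,x)$; for \eqref{eq33}, $F_2(x,y)-F_2(x,-y)$): the substitution $y\mapsto -y$ produces a factor $1-(-1)^{s-1}=1+(-1)^s$, and halving gives exactly the term $\tfrac{1+(-1)^s}{2}\zeta(\overline r)\zeta(s)$ (resp.\ $\zeta(r)\zeta(\overline s)$, $\zeta(\overline r)\zeta(\overline s)$). (ii) The two $F$-terms carrying a shifted argument $x-y$, each taken together with its $x\mapsto -x$ or $y\mapsto -y$ partner: expanding $(x-y)^{n}=\sum_i\binom{n}{i}(-1)^{n-i}x^iy^{n-i}$ converts each into a single sum of products of two single (possibly barred) zeta values whose arguments sum to $k$, weighted by $\binom{m-1}{r-1}$ or $\binom{m-1}{s-1}$; the crucial point is that combining such a term with its sign-partner and using that $k$ is odd turns all stray $(-1)^{a-r}$-type weights into a common factor $(-1)^r\bigl(1-(-1)^m\bigr)$, which annihilates the even values of the summation index $m$ and globally contributes the sign $(-1)^r$. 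Re-indexing the surviving odd index as $m=k-2l$ then yields precisely $(-1)^r\sum_{l\ge1}\binom{k-2l-1}{r-1}\zeta(\overline{k-2l})\zeta(\overline{2l})$ and $(-1)^r\sum_{l\ge1}\binom{k-2l-1}{s-1}\zeta(k-2l)\zeta(\overline{2l})$, the barred/unbarred status of the $k-2l$-argument being dictated by whether that term came from $F_1$ (one bar) or $F_2$ (two bars). (iii) The three $T$-terms: writing $T_i(x,y)=\zeta(\cdot)\sum_{a+b=k}x^{a-1}y^{b-1}$, using the same binomial expansion, and invoking the hockey-stick identity $\sum_{b=s}^{k-1}\binom{b-1}{s-1}=\binom{k-1}{s}=\binom{k-1}{r-1}$, their total collapses (after halving) to the leading term $-\tfrac12\zeta(\overline k)$ (resp.\ $-\tfrac12\zeta(k)$ for \eqref{eq36}) plus $(-1)^r\bigl[\binom{k-1}{r-1}\zeta(\overline k)+\binom{k-1}{s-1}\zeta(k)\bigr]\zeta(\overline0)$, which is exactly the $l=0$ summand of the stated sum once one uses $\zeta(\overline0)=\zeta(0)=-\tfrac12$. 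Adding (i)--(iii) gives \eqref{eq34}, and \eqref{eq35}, \eqref{eq36} come out of \eqref{eq32}, \eqref{eq33} in identical fashion.

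The whole argument is elementary, so the main obstacle is organizational: one must track carefully the sign factors produced by each of the substitutions $x\mapsto -x$, $y\mapsto -y$, $x\mapsto x-y$, $y\mapsto x-y$ across all nine terms of every identity, and pair the shifted terms correctly so that the stray $(-1)^{a-r}$-weights really do collapse into $(-1)^r(1-(-1)^m)$. Some attention is also needed at the boundary: the formal value $\zeta(1)=T$ occurring in the $l=(k-1)/2$ summand (consistent with the symbolic definition of $\zeta(\overline r,1)$), the value $\zeta(\overline1)=-\log2$, and the convergence restriction $s\ge2$ in \eqref{eq34} (whereas $\zeta(r,\overline1)$ and $\zeta(\overline r,\overline1)$ in \eqref{eq35}--\eqref{eq36} are already convergent). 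As a final check, the three resulting formulas should be mutually compatible with the stuffle relations \eqref{eq17}--\eqref{eq18}, which also accounts for the symmetries one observes among \eqref{eq34}--\eqref{eq36} and for their parallel with the known \eqref{eq11}.
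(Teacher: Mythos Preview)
Your proposal is correct and follows essentially the same approach as the paper: extract the coefficient of $x^{r-1}y^{s-1}$ from each side of \eqref{eq31}--\eqref{eq33}, using the oddness of $k$ to double the left-hand side and to collapse the sign-paired $F$-terms to sums over $l\ge 1$, and absorbing the $T$-terms as the $l=0$ summand via $\zeta(0)=\zeta(\overline 0)=-\tfrac12$. The only cosmetic difference is that the paper handles $T_2(x,x-y)$ and $T_1(x-y,-y)$ via the closed form $\frac{u^{k-1}-v^{k-1}}{u-v}$ rather than summing termwise with the hockey-stick identity, which avoids that extra step; otherwise the computations are identical.
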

\begin{proof}
We prove \eqref{eq34}. The other two formulas can be derived in the same way. We apply the formula \eqref{eq31}. Since $k$ is odd,
\begin{align*}
G_1(x,y)-G_1(-x,-y)=&2\sum_{\substack{r, s\geq 1\\r+s=k}}\zeta(\overline{r}, s)x^{r-1}y^{s-1}.
\end{align*}
\begin{align*}
F_1(x,y)-F_1(x, -y)=& \sum_{\substack{r, s\geq 1\\r+s=k}}\left(1+(-1)^s\right)\zeta(\overline{r})\zeta(s)x^{r-1}y^{s-1}.
\end{align*}
\begin{align*}
F_2(x-y,-y)-F_2(x-y,y)=&-2\sum_{l=1}^{\frac{k-1}{2}}\zeta(\overline{k-2l})\zeta(\overline{2l})(x-y)^{k-2l-1}  y^{2l-1}\\
=&-2\sum_{l=1}^{\frac{k-1}{2}}\zeta(\overline{k-2l})\zeta(\overline{2l})\sum_{r=1}^{k-1}(-1)^{k-2l-r}\begin{pmatrix}k-2l-1\\r-1\end{pmatrix} x^{r-1}  y^{k-r-1}\\
=&2\sum_{r=1}^{k-1}\left(\sum_{l=1}^{\frac{k-1}{2}}(-1)^{r}\begin{pmatrix}k-2l-1\\r-1\end{pmatrix}\zeta(\overline{k-2l})\zeta(\overline{2l})\right) x^{r-1}  y^{k-r-1}.
\end{align*}
Similarly,
\begin{align*}
F_1(x,x-y)-F_1(-x,x-y) =&2\sum_{r=1}^{k-1}\left(\sum_{l=1}^{\frac{k-1}{2}}(-1)^r\begin{pmatrix}k-2l-1\\k-r-1\end{pmatrix}\zeta(k-2l)\zeta(\overline{2l})\right) x^{r-1}  y^{k-r-1}.
\end{align*}
\begin{align*}
T_2(x,x-y)=&\zeta(\overline{k})\frac{x^{k-1}-(x-y)^{k-1}}{y} \\
=&\zeta(\overline{k})\sum_{r=1}^{k-1} (-1)^r\begin{pmatrix} k-1\\r-1\end{pmatrix} x^{r-1}y^{k-r-1},\\
T_1(x-y,-y)=&\zeta(k)\frac{(x-y)^{k-1}-(-y)^{k-1}}{x}\\
=&\zeta(k)\sum_{r=1}^{k-1}(-1)^r\begin{pmatrix} k-1\\k-r-1\end{pmatrix}x^{r-1}y^{k-r-1}.
\end{align*}
Compare both sides of \eqref{eq31} and use the fact that
$$\zeta(0)=\zeta(\overline{0})=-\frac{1}{2}$$ give
\begin{align*}
\zeta(\overline{r}, s)
=&-\frac{1}{2}\zeta(\overline{k})
+\frac{1+(-1)^s}{2}\zeta(\overline{r})\zeta(s)
\\&+(-1)^r\sum_{l=0}^{\frac{k-1}{2}}\begin{pmatrix}k-2l-1\\r-1\end{pmatrix}\zeta(\overline{k-2l})\zeta(\overline{2l})
+(-1)^r\sum_{l=0}^{\frac{k-1}{2}} \begin{pmatrix}k-2l-1\\s-1\end{pmatrix}\zeta(k-2l)\zeta(\overline{2l}),
\end{align*}which is the desired result.
\end{proof}

From the proof, we see that in deriving the formulas for the alternating Euler sums, we need to use relations that involve $\zeta(\overline{r}, s)$, $\zeta(r, \overline{s})$ and $\zeta(\overline{r}, \overline{s})$ together. Considering anyone of them alone cannot work.

\begin{remark}
In \eqref{eq34}, $r\geq 1$, $s\geq 2$ and both sides are well-defined. Similarly, in \eqref{eq36}, $r\geq 1$, $s\geq 1$ and both sides are also well-defined. In \eqref{eq35}, one has to be careful when $r=1$. In this case, the equation should read as
\begin{equation}\label{eq57}\begin{split}
\zeta(r, \overline{s})=&-\frac{1}{2} \zeta(\overline{k})
\\&+(-1)^r\left[\sum_{l=0}^{\frac{k-3}{2}} \begin{pmatrix}k-2l-1\\r-1\end{pmatrix}\zeta(k-2l)\zeta(\overline{2l}) + \sum_{l=0}^{\frac{k-1}{2}} \begin{pmatrix}k-2l-1\\s-1\end{pmatrix}\zeta(\overline{k-2l})\zeta(\overline{2l})\right].
\end{split}\end{equation}

\end{remark}

\section{Hypergeometric Identities }
In this section, we prove a hypergeometric identity that is of interest in its own right. Recall that the generalized hypergeometric function is defined as (see e.g. \cite{4}):
\begin{align*}
&_{p}F_{q}\left[\begin{aligned} a_1, a_2,   \ldots, a_p\\b_1, b_2, \ldots, b_q\end{aligned}\;;\;x\right]=\sum_{n=0}^{\infty}
\frac{(a_1)_n\ldots (a_p)_n}{(b_1)_n\ldots (b_q)_n}\frac{x^n}{n!}.
\end{align*}Here $$(x)_n=x(x+1)\ldots(x+n-1)=\frac{\Gamma(x+n)}{\Gamma(x)}$$
is the Pochhammer symbol, and $b_i$ are not negative integers or zero. If $p=q+1$, the series converges absolutely for $|x|<1$. In addition, if
$$\text{Re}\,\left(\sum_{i=1}^q b_i-\sum_{i=1}^{q+1}a_i\right)>0,$$
then the series $_{q+1}F_q$ converges absolutely for $|x|\leq 1$. If
$$0\geq \text{Re}\,\left(\sum_{i=1}^q b_i-\sum_{i=1}^{q+1}a_i\right)>-1,$$
the series $_{q+1}F_q$ converges conditionally if $|x|= 1$ and $x\neq 1$.

Of special interest are the hypergeometric sums
$\displaystyle
_{q+1}F_{q}\left[\begin{aligned} a_1, a_2,   \ldots, a_{q+1}\\b_1, b_2, \ldots, b_q\quad\end{aligned}\,;\,\pm 1\right]$. We first quote a few well-known identities.
\begin{lemma}[Gauss]\label{lemma7}
If $\text{Re}\,(c-a-b)>0$, then
\begin{align*}
\sum_{n=0}^{\infty}\frac{(a)_n(b)_n}{n! (c)_n}=\,_2F_1\left[\begin{aligned} a, b\\ c\;\;\end{aligned}\;;\;1\right]=\frac{\Gamma(c)\Gamma(c-a-b)}{\Gamma(c-a)\Gamma(c-b)}.
\end{align*}
\end{lemma}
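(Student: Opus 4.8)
The plan is to prove Gauss's summation formula from the Euler integral representation of $\,_2F_1$, combined with Abel's limit theorem, and then to remove the auxiliary restriction on the parameters by analytic continuation. First I would establish that, for $\mathrm{Re}\,c>\mathrm{Re}\,b>0$ and $|x|<1$,
\[
\,_2F_1\left[\begin{aligned} a, b\\ c\end{aligned}\;;\;x\right]=\frac{\Gamma(c)}{\Gamma(b)\Gamma(c-b)}\int_0^1 t^{b-1}(1-t)^{c-b-1}(1-xt)^{-a}\,dt .
\]
This follows by expanding $(1-xt)^{-a}=\sum_{n\ge 0}\frac{(a)_n}{n!}(xt)^n$, which converges uniformly for $t\in[0,1]$ when $|x|<1$, interchanging sum and integral (justified by this uniform convergence together with the integrability of the Beta weight $t^{b-1}(1-t)^{c-b-1}$ near the endpoints), and evaluating each term with $\int_0^1 t^{b+n-1}(1-t)^{c-b-1}\,dt=\frac{\Gamma(b+n)\Gamma(c-b)}{\Gamma(c+n)}=\frac{\Gamma(b)\Gamma(c-b)}{\Gamma(c)}\cdot\frac{(b)_n}{(c)_n}$.

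Next I would pass to the limit $x\to 1^-$. When $\mathrm{Re}(c-a-b)>0$ the coefficients of the series behave like $n^{\mathrm{Re}(a+b-c)-1}$, so $\sum_n\frac{(a)_n(b)_n}{n!(c)_n}$ converges, and Abel's theorem gives $\lim_{x\to1^-}\,_2F_1[a,b;c;x]=\,_2F_1[a,b;c;1]$. On the integral side, the same hypothesis makes $t^{b-1}(1-t)^{c-b-a-1}$ integrable on $[0,1]$, so dominated convergence yields
\[
\,_2F_1[a,b;c;1]=\frac{\Gamma(c)}{\Gamma(b)\Gamma(c-b)}\int_0^1 t^{b-1}(1-t)^{c-b-a-1}\,dt
=\frac{\Gamma(c)}{\Gamma(b)\Gamma(c-b)}\cdot\frac{\Gamma(b)\Gamma(c-a-b)}{\Gamma(c-a)},
\]
which simplifies to $\frac{\Gamma(c)\Gamma(c-a-b)}{\Gamma(c-a)\Gamma(c-b)}$. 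Finally I would drop the restriction $\mathrm{Re}\,c>\mathrm{Re}\,b>0$: for fixed $a$ and $c$, both sides are holomorphic in $b$ on the region $\mathrm{Re}(c-a-b)>0$ (the left side because the series converges locally uniformly there, the right side being a quotient of Gamma functions with no poles), and they agree on the nonempty open subset where $\mathrm{Re}\,c>\mathrm{Re}\,b>0$, hence everywhere by the identity theorem; symmetry in $a$ and $b$ finishes the argument.

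The \emph{main obstacle} is the analytic housekeeping rather than any single hard idea: one must justify term-by-term integration in the first step at the singular endpoints $t=0,1$, and, more delicately, justify the passage $x\to1^-$ in the integral by producing a dominating function uniformly in $x\in[0,1)$ — here one bounds $|(1-xt)^{-a}|$ by a constant times $(1-t)^{-\mathrm{Re}\,a}$ (for $\mathrm{Re}\,a\ge 0$, with a trivial bound otherwise) and invokes the hypothesis $\mathrm{Re}(c-a-b)>0$ to guarantee integrability of the majorant. An alternative route, closer to Gauss's original argument, is to set $\varphi(a,b,c)=\,_2F_1[a,b;c;1]$, use a contiguous relation to derive $\varphi(a,b,c)=\frac{(c-a)(c-b)}{c\,(c-a-b)}\,\varphi(a,b,c+1)$, iterate $N$ times, and evaluate $\lim_{N\to\infty}\varphi(a,b,c+N)=1$ together with the corresponding $\Gamma$-factor limit via Stirling's asymptotics; I would regard the integral approach above as the cleaner of the two.
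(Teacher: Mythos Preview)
The paper does not prove this lemma; its proof consists of the single sentence ``See \cite{4}, page 66.'' Your proposal supplies a correct and standard argument --- indeed, the Euler-integral route you describe is essentially the one found in the cited reference. One minor point: the final analytic-continuation step is slightly more delicate than your sketch suggests, since fixing $a$ and $c$ and varying $b$ requires the set $\{b:0<\mathrm{Re}\,b<\mathrm{Re}\,c\}\cap\{b:\mathrm{Re}(c-a-b)>0\}$ to be nonempty, which fails when $\mathrm{Re}\,c\le 0$. The cleanest fix is to observe that both sides are jointly holomorphic in $(a,b,c)$ on the connected domain $\{\mathrm{Re}(c-a-b)>0,\ c\notin\mathbb{Z}_{\le 0}\}\subset\mathbb{C}^3$ and agree on a nonempty open subset, so the several-variable identity theorem finishes the job in one stroke; the appeal to symmetry in $a$ and $b$ is then unnecessary.
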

\begin{proof}
See \cite{4}, page 66.
\end{proof}

\begin{lemma}[Pfaff-Saalsch\"utz]\label{lemma8}
\begin{align}
_3F_2\left[\begin{aligned} a, b, -n \hspace{1cm}\\c, 1+a+b-c-n\end{aligned}\;;\;1\right]=\frac{(c-a)_n(c-b)_n}{(c)_n(c-a-b)_n}.
\end{align}
\end{lemma}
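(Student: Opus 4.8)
The plan is to derive the Pfaff--Saalsch\"utz identity from Euler's transformation of the Gauss function by comparing power-series coefficients. Recall Euler's transformation
\[
{}_2F_1\left[\begin{aligned} a,\, b\\ c\;\;\end{aligned}\;;\;z\right]=(1-z)^{c-a-b}\,{}_2F_1\left[\begin{aligned} c-a,\, c-b\\ c\hspace{1.1cm}\end{aligned}\;;\;z\right],\qquad |z|<1,
\]
which is a standard consequence of the Euler integral representation of ${}_2F_1$ (for instance via two applications of Pfaff's transformation; see \cite{4}). Multiplying both sides by $(1-z)^{a+b-c}$ puts it in the equivalent form
\[
(1-z)^{a+b-c}\,{}_2F_1\left[\begin{aligned} a,\, b\\ c\;\;\end{aligned}\;;\;z\right]={}_2F_1\left[\begin{aligned} c-a,\, c-b\\ c\hspace{1.1cm}\end{aligned}\;;\;z\right].
\]

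Next I would expand both sides as power series in $z$ about $z=0$ and equate the coefficient of $z^n$. On the right this coefficient is $\dfrac{(c-a)_n(c-b)_n}{(c)_n\,n!}$. On the left, writing $(1-z)^{a+b-c}=(1-z)^{-(c-a-b)}=\sum_{j\ge0}\dfrac{(c-a-b)_j}{j!}z^j$ and forming the Cauchy product with the series for ${}_2F_1\!\left[a,b;c;z\right]$, the coefficient of $z^n$ is $\displaystyle\sum_{k=0}^{n}\frac{(c-a-b)_{n-k}}{(n-k)!}\,\frac{(a)_k(b)_k}{(c)_k\,k!}$.

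To recognise this finite sum as a ${}_3F_2$ at $1$, I would apply the elementary reversal identities $(x)_{n-k}=\dfrac{(-1)^k (x)_n}{(1-x-n)_k}$ and $\dfrac{1}{(n-k)!}=\dfrac{(-1)^k(-n)_k}{n!}$, which together give $\dfrac{(c-a-b)_{n-k}}{(n-k)!}=\dfrac{(c-a-b)_n}{n!}\cdot\dfrac{(-n)_k}{(1+a+b-c-n)_k}$. Substituting this in, the left-hand coefficient becomes
\[
\frac{(c-a-b)_n}{n!}\sum_{k=0}^{n}\frac{(-n)_k(a)_k(b)_k}{(c)_k\,(1+a+b-c-n)_k\,k!}
=\frac{(c-a-b)_n}{n!}\,{}_3F_2\left[\begin{aligned} a,\, b,\, -n \hspace{1cm}\\ c,\, 1+a+b-c-n\end{aligned}\;;\;1\right].
\]
Equating this with $\dfrac{(c-a)_n(c-b)_n}{(c)_n\,n!}$ and cancelling the nonzero rational factor $(c-a-b)_n/n!$ yields the asserted formula. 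Since $n$ is a nonnegative integer the ${}_3F_2$ terminates, so no convergence question arises; and since both sides are rational functions of $a,b,c$, it is enough to have the identity on an open set of parameters, which is exactly where the manipulations above are valid.

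The coefficient extraction and the Pochhammer bookkeeping are routine — only the sign factors in the reversal identities require a little care. The sole external ingredient is Euler's transformation, and the one point to watch is to establish it without circularity; the shortest self-contained route is the beta-integral representation of ${}_2F_1$ followed by two applications of Pfaff's transformation, after which the analytic dependence on the parameters removes the temporary restriction $\mathrm{Re}\,c>\mathrm{Re}\,b>0$. (Alternatively, one may simply quote Euler's transformation from \cite{4}, or give a direct induction on $n$ using contiguous relations, but the generating-function argument above is the cleanest.)
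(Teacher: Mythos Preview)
Your proof is correct. The paper does not actually prove this lemma but simply cites \cite{4}, page~69 (Andrews--Askey--Roy, \emph{Special Functions}); the argument you give---equating coefficients in Euler's transformation of ${}_2F_1$---is precisely the classical proof recorded there, so your approach coincides with the referenced one.
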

\begin{proof}
See \cite{4}, page 69.
\end{proof}
As a corollary, we have
\begin{corollary}\label{corollary2}
\begin{align*}
\frac{(b)_n(c)_n}{(1+a-b)_n(1+a-c)_n}=&\sum_{r=0}^n\frac{(a+n)_r (1+a-b-c)_r(-n)_r}{r! (1+a-b)_r(1+a-c)_r}.
\end{align*}
\end{corollary}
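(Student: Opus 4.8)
The plan is to read the right-hand side as a terminating Saalsch\"utzian ${}_3F_2$ evaluated at $1$ and then invoke Lemma \ref{lemma8}. Spelling out the Pochhammer symbols, the sum on the right of Corollary \ref{corollary2} is
\[
\sum_{r=0}^n\frac{(a+n)_r\,(1+a-b-c)_r\,(-n)_r}{r!\,(1+a-b)_r\,(1+a-c)_r}
= {}_3F_2\left[\begin{aligned} a+n,\ 1+a-b-c,\ -n\\ 1+a-b,\ 1+a-c\end{aligned}\;;\;1\right],
\]
which is a terminating series because $-n$ is a nonpositive integer, so no convergence question arises.

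Next I would verify that this series fits the template of Lemma \ref{lemma8}. Put $A=a+n$, $B=1+a-b-c$, $N=n$, and take the first lower parameter to be $C=1+a-b$; then a one-line computation gives $1+A+B-C-N=1+a-c$, which is exactly the second lower parameter. Hence the hypothesis of the Pfaff--Saalsch\"utz theorem is met and Lemma \ref{lemma8} yields
\[
{}_3F_2\left[\begin{aligned} a+n,\ 1+a-b-c,\ -n\\ 1+a-b,\ 1+a-c\end{aligned}\;;\;1\right]
=\frac{(C-A)_n\,(C-B)_n}{(C)_n\,(C-A-B)_n}
=\frac{(1-b-n)_n\,(c)_n}{(1+a-b)_n\,(c-a-n)_n}.
\]

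It then remains to match this with the left-hand side of Corollary \ref{corollary2}. The common factors $(c)_n$ in the numerator and $(1+a-b)_n$ in the denominator already agree, so I would rewrite the remaining factors with the elementary reflection identity $(x)_n=(-1)^n(1-x-n)_n$: taking $x=b$ converts $(1-b-n)_n$ into $(-1)^n(b)_n$, and taking $x=1+a-c$ (so that $1-x-n=c-a-n$) converts $(c-a-n)_n$ into $(-1)^n(1+a-c)_n$. The factors of $(-1)^n$ cancel, leaving exactly $(b)_n(c)_n/\big((1+a-b)_n(1+a-c)_n\big)$. The only points needing care are the identification of the five parameters with the Saalsch\"utzian pattern in the second step and the sign bookkeeping in the Pochhammer reflections in the last step; apart from that the argument is purely formal.
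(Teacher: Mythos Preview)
Your proof is correct and follows essentially the same approach as the paper: both recognize the sum as the terminating Saalsch\"utzian ${}_3F_2\!\left[\begin{smallmatrix} a+n,\,1+a-b-c,\,-n\\ 1+a-b,\,1+a-c\end{smallmatrix};1\right]$, apply Pfaff--Saalsch\"utz (Lemma~\ref{lemma8}), and use the reflection identity $(x)_n=(-1)^n(1-x-n)_n$ to match the Pochhammer factors. The only cosmetic difference is direction---the paper starts from the left-hand side, reflects first, and then identifies the ${}_3F_2$, whereas you work from the right-hand side.
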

\begin{proof}
Using the fact that
\begin{align*}
(x)_n=&x(x+1)\ldots(x+n-1)\\=&(-1)^n(1-x-n)(2-x-n)\ldots (-x)\\=&(-1)^n(1-x-n)_n,\end{align*}
we have
\begin{align*}
\frac{(b)_n(c)_n}{(1+a-b)_n(1+a-c)_n}=&\frac{(b)_n(1-c-n)_n}{(b-a-n)_n(1+a-c)_n}\\
=&_3F_2\left[\begin{aligned} a+n, 1+a-b-c, -n \\1+a-b, 1+a-c\quad \end{aligned}\;;\;1\right]\\
=&\sum_{r=0}^n\frac{(a+n)_r (1+a-b-c)_r(-n)_r}{r! (1+a-b)_r(1+a-c)_r}.
\end{align*}
\end{proof}

The following is an identity that can be proved using integral representation of $_2F_1$.
\begin{lemma}\label{lemma9} If $1+a-b$ is not zero or a negative integer, and $\di\text{Re}\, b<1$,
\begin{align*}_2F_1\left[\begin{aligned} a, b\quad \\1+a-b\end{aligned}\;;\;-1\right]=&\frac{1}{2}\frac{\di\Gamma\left(\frac{a}{2}\right)\Gamma(1+a-b)}{\Gamma(a)\di\Gamma\left(1+\frac{a}{2}-b\right)}
\end{align*}
\end{lemma}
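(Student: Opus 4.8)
The claimed evaluation is Kummer's theorem for the $_2F_1$ at $-1$, and the cleanest route is the one the lemma hints at: start from the Euler integral representation
\begin{align*}
{}_2F_1\left[\begin{aligned} a, b\quad \\ 1+a-b\end{aligned}\;;\;x\right]=\frac{\Gamma(1+a-b)}{\Gamma(b)\Gamma(1+a-2b)}\int_0^1 t^{b-1}(1-t)^{a-2b}(1-xt)^{-a}\,dt,
\end{align*}
valid when $\operatorname{Re} b>0$ and $\operatorname{Re}(1+a-2b)>0$; the hypothesis $\operatorname{Re} b<1$ is what one needs afterward to push the final answer (and the convergence at $x=-1$) through. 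Setting $x=-1$ gives $\int_0^1 t^{b-1}(1-t)^{a-2b}(1+t)^{-a}\,dt$. The plan is to massage this integral into a Beta integral by the substitution that symmetrizes $t\mapsto (1-t)/(1+t)$ (equivalently $t=(1-u)/(1+u)$), which is the classical trick for Kummer-type evaluations: it sends $[0,1]$ to $[0,1]$, turns $1+t$ into $2/(1+u)$, turns $1-t$ into $2u/(1+u)$, and $dt$ into $-2\,du/(1+u)^2$, after which the powers of $(1+u)$ miraculously collapse and one is left (up to the constant $2^{1-a}$ or so) with $\int_0^1 u^{a-2b}(\text{something})^{b-1}\,du$, a Beta function.

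Concretely, after the substitution I expect the integrand to become a constant times $u^{a-2b}(1-u)^{b-1}(1+u)^{b-1}\cdot(1+u)^{-(a-2b)-(b-1)-2}\cdot(\text{from }t^{b-1})$; carefully collecting exponents of $(1+u)$, the total exponent should vanish (this is the point of choosing exactly the parameter string $1+a-b$), leaving $\int_0^1 u^{a-2b}(1-u)^{b-1}\,du=B(1+a-2b,b)=\Gamma(1+a-2b)\Gamma(b)/\Gamma(1+a-b)$. Multiplying back by the prefactor $\Gamma(1+a-b)/(\Gamma(b)\Gamma(1+a-2b))$ and the power of $2$ collected along the way gives a pure ratio of Gammas; the last step is to rewrite it in the stated form using the duplication formula $\Gamma(a)=\pi^{-1/2}2^{a-1}\Gamma(a/2)\Gamma((a+1)/2)$, which is exactly what converts the $2^{1-a}$ and the "half-integer-shifted" arguments into $\tfrac12\,\Gamma(a/2)\Gamma(1+a-b)/\bigl(\Gamma(a)\Gamma(1+\tfrac a2-b)\bigr)$.

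An alternative, if one prefers to avoid the substitution bookkeeping, is to split $(1+t)^{-a}$ isn't available in closed form, so instead expand in the other direction: write the left side as $\sum_n \frac{(a)_n(b)_n}{n!(1+a-b)_n}(-1)^n$, split the sum over even and odd $n$, and recognize each piece as a ${}_2F_1$ at $1$ in the variable $n\mapsto 2m$, evaluable by Gauss's theorem (Lemma \ref{lemma7}); but making the even/odd split rigorous requires the conditional convergence granted by $\operatorname{Re} b<1$ and an Abel summation argument, so the integral route is less fussy. Either way, the \emph{main obstacle} is purely technical: tracking the powers of $(1+u)$ through the Möbius substitution so that they cancel, and then not dropping a factor of $2$ or mis-halving an argument when invoking the duplication formula. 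There is no conceptual difficulty; the identity is a known special case (Kummer), and the hypotheses $\operatorname{Re} b<1$ and "$1+a-b\notin\mathbb{Z}_{\le 0}$" are precisely the minimal ones making both the integral representation and the right-hand side simultaneously valid, with the conditional convergence at the boundary $|x|=1$, $x=-1$ already licensed by the convergence criteria recorded just before Lemma \ref{lemma7}.
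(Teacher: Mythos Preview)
Your overall strategy (Euler integral at $x=-1$, reduce to a Beta integral, then analytically continue) is fine and does lead to Kummer's formula, but the specific computation you sketch is not quite right. With $t=(1-u)/(1+u)$ one has $1-t=2u/(1+u)$, $1+t=2/(1+u)$, $dt=-2(1+u)^{-2}du$, and hence
\[
\int_0^1 t^{\,b-1}(1-t)^{a-2b}(1+t)^{-a}\,dt
=2^{1-2b}\int_0^1 u^{\,a-2b}(1-u)^{b-1}(1+u)^{b-1}\,du.
\]
The $(1+u)$--exponent does \emph{not} vanish; what you actually get is $(1-u^2)^{b-1}$, so you still need a further substitution $v=u^2$ and then two applications of the duplication formula (for $\Gamma(1+a-2b)$ and for $\Gamma(a)$) to reach the stated form. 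This works, but it is exactly the ``bookkeeping'' you were hoping to avoid, and your claimed intermediate Beta integral $B(1+a-2b,b)$ is incorrect.

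The paper's proof sidesteps all of this with one observation you missed: since ${}_2F_1[a,b;c;x]={}_2F_1[b,a;c;x]$, put $a$ (not $b$) in the integrated slot of the Euler representation. Then the two non-$t^{a-1}$ factors are $(1-t)^{-b}$ and $(1+t)^{-b}$ with the \emph{same} exponent, so they combine to $(1-t^2)^{-b}$ immediately; the substitution $t\mapsto t^{1/2}$ gives $\tfrac12\int_0^1 t^{a/2-1}(1-t)^{-b}\,dt=\tfrac12 B(a/2,1-b)$, and the $\Gamma$'s in the answer appear without any duplication formula. This also makes the hypothesis $\operatorname{Re} b<1$ transparent (it is exactly the convergence condition $\operatorname{Re}(1-b)>0$ for that Beta integral), whereas your version initially needs $\operatorname{Re} b>0$ and $\operatorname{Re}(1+a-2b)>0$ before continuation.
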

\begin{proof}
Using the integral representation (see \cite{4}, page 65)
\begin{align*}
_2F_1\left[\begin{aligned} \alpha, \beta\\ \gamma\;\;\end{aligned}\;;\;x\right]=&\frac{\Gamma(\gamma)}{\Gamma(\beta)\Gamma(\gamma-\beta)}\int_0^1t^{\beta-1}(1-t)^{\gamma-\beta-1}(1-xt)^{-\alpha}dt,
\end{align*}we find that if $\text{Re}\,a>0$ and $\text{Re}\, b<1$,
\begin{align*}
_2F_1\left[\begin{aligned} a, b\quad \\1+a-b\end{aligned}\;;\;-1\right]=&\frac{\Gamma(1+a-b)}{\Gamma(a)\Gamma(1-b)}\int_0^1t^{a-1}(1-t)^{-b}(1+t)^{-b}dt\\
=&\frac{1}{2}\frac{\Gamma(1+a-b)}{\Gamma(a)\Gamma(1-b)}\int_0^{1}t^{\frac{a}{2}-1}(1-t)^{-b}dt\\
=&\frac{1}{2}\frac{\Gamma(1+a-b)}{\Gamma(a)\Gamma(1-b)} \frac{\di\Gamma\left(\frac{a}{2}\right)\Gamma(1-b)}{\di\Gamma\left(1+\frac{a}{2}-b\right)}\\
=&\frac{1}{2}\frac{\di\Gamma\left(\frac{a}{2}\right)\Gamma(1+a-b)}{\Gamma(a)\di\Gamma\left(1+\frac{a}{2}-b\right)}.
\end{align*}
The results follows from analytic continuation.
\end{proof}

Now we prove a special case of our main result.
\begin{theorem}\label{Theorem3}
If none of $a/2$, $1+a-b$ and $1+a-c$ is zero or a negative integer, and $\text{Re}\,(2+a-2b-2c)>0$, then
\begin{align*}
_4F_3\left[\begin{aligned} a, 1+a/2, b, c\hspace{0.8cm}\\ a/2, 1+a-b, 1+a-c\end{aligned}\;;\; -1\right]=&\frac{\Gamma(1+a-b)\Gamma(1+a-c)}{\Gamma(1+a)\Gamma(1+a-b-c)}.
\end{align*}
\end{theorem}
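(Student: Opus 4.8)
The plan is to prove the $x=-1$ evaluation by first establishing a closed transformation of this ${}_4F_3$ valid on the open unit disc and then passing to the boundary point $x=-1$. Concretely, for $|x|<1$ I would show that
\[
{}_4F_3\left[\begin{aligned} a,\ 1+a/2,\ b,\ c\\ a/2,\ 1+a-b,\ 1+a-c\end{aligned}\;;\;x\right]=(1+x)(1-x)^{-a-1}\,{}_3F_2\left[\begin{aligned} \tfrac{a+1}{2},\ 1+\tfrac a2,\ 1+a-b-c\\ 1+a-b,\ 1+a-c\qquad\quad\end{aligned}\;;\;\frac{-4x}{(1-x)^2}\right],
\]
and then let $x\to-1$ along $(-1,1)$.

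To obtain the transformation, apply Corollary \ref{corollary2} (with the same $a,b,c$) to rewrite $(b)_n(c)_n\big/\bigl[(1+a-b)_n(1+a-c)_n\bigr]$ as the \emph{finite} sum $\sum_{r=0}^n (a+n)_r(1+a-b-c)_r(-n)_r\big/\bigl[r!\,(1+a-b)_r(1+a-c)_r\bigr]$, substitute it into the defining series, and interchange the two summations; this is legitimate for $|x|<1$, where everything converges absolutely. Using $(a)_n(a+n)_r=(a)_{n+r}$ and $(-n)_r=(-1)^r\,n!/(n-r)!$ and setting $m=n-r$, the inner sum over $n$ collapses, up to an $r$-dependent factor, to $\sum_{m\ge0}(\alpha+2m)(\alpha)_m x^m/m!$ with $\alpha=a+2r$, and differentiating the binomial series gives the elementary evaluation $\sum_{m\ge0}(\alpha+2m)(\alpha)_m x^m/m!=\alpha(1+x)(1-x)^{-\alpha-1}$. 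Pulling out $(1+x)(1-x)^{-a-1}$ and simplifying $(a)_{2r}(a+2r)/a=(a+1)_{2r}=4^r\bigl(\tfrac{a+1}{2}\bigr)_r\bigl(1+\tfrac a2\bigr)_r$ via the duplication formula for Pochhammer symbols turns the remaining series into exactly the ${}_3F_2$ with argument $-4x/(1-x)^2$ displayed above; a check of the leading Taylor coefficients in $x$ corroborates the formula.

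For the limit, observe $1-\dfrac{-4x}{(1-x)^2}=\dfrac{(1+x)^2}{(1-x)^2}$, so if $z=-4x/(1-x)^2$ then $z\to1^-$ as $x\to-1^+$, and $(1+x)(1-x)^{-a-1}=(1-x)^{-a}\sqrt{1-z}$ on $(-1,1)$. The ${}_3F_2$ above has parameter excess $(1+a-b)+(1+a-c)-\tfrac{a+1}{2}-\bigl(1+\tfrac a2\bigr)-(1+a-b-c)=-\tfrac12<0$, so by the standard $z\to1^-$ asymptotics of a generalized hypergeometric series it behaves like $\dfrac{\Gamma(1+a-b)\Gamma(1+a-c)\,\Gamma(1/2)}{\Gamma\bigl(\tfrac{a+1}{2}\bigr)\Gamma\bigl(1+\tfrac a2\bigr)\Gamma(1+a-b-c)}\,(1-z)^{-1/2}$; hence $\sqrt{1-z}\cdot{}_3F_2\to$ that constant. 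Multiplying by $(1-x)^{-a}\to2^{-a}$ and using $\Gamma\bigl(\tfrac{a+1}{2}\bigr)\Gamma\bigl(1+\tfrac a2\bigr)=2^{-a}\sqrt\pi\,\Gamma(1+a)$ (duplication once more) collapses everything to $\dfrac{\Gamma(1+a-b)\Gamma(1+a-c)}{\Gamma(1+a)\Gamma(1+a-b-c)}$. Since the hypothesis $\mathrm{Re}(2+a-2b-2c)>0$ is precisely the condition for the defining series of the ${}_4F_3$ to converge at $x=-1$, Abel's theorem identifies this limiting value with ${}_4F_3[\,\cdot\,;-1]$, which is the assertion (the general complex case then follows by analytic continuation in $a,b,c$).

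The delicate point is the boundary step: one must invoke (or, in this case, reprove) the exact $z\to1$ asymptotics of the non-terminating ${}_3F_2$ — namely that its singular contribution is the single term $(1-z)^{-1/2}$ with the stated Gamma constant — and then check that the prefactor $(1+x)$ cancels it and the duplication formula delivers the right-hand side, while Abel's theorem (whose hypothesis is exactly the stated convergence condition) supplies the last step. An alternative that avoids the boundary asymptotics altogether is to first treat the terminating case $c=-N$: after the same application of Corollary \ref{corollary2} and interchange the inner sum telescopes to a closed form, a numerator parameter then coincides with a denominator parameter and reduces the remaining series to a terminating ${}_2F_1$ at $1$ summable by the Chu--Vandermonde identity (the terminating instance of Lemma \ref{lemma7}), which yields the right-hand side for $c=-N$; one then extends to general $c$ by a Carlson-type uniqueness theorem, at the cost of the growth estimates that theorem requires.
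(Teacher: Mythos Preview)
Your argument is correct, but it takes a genuinely different route from the paper's. Both proofs begin by invoking Corollary~\ref{corollary2} (Pfaff--Saalsch\"utz) to expand a ratio of Pochhammer symbols, but they apply it to different pairs. The paper expands $(b)_n(1+a/2)_n/[(1+a-b)_n(a/2)_n]$; after interchanging sums the inner series is a ${}_2F_1$ at $-1$ of Kummer type, evaluated by Lemma~\ref{lemma9}, and the remaining sum is a ${}_2F_1$ at $1$ evaluated by Gauss (Lemma~\ref{lemma7}). Everything happens directly at the point $x=-1$ and uses only the ${}_2F_1$ summations already proved in the paper. You instead expand $(b)_n(c)_n/[(1+a-b)_n(1+a-c)_n]$; the inner sum then collapses by the elementary identity $\sum_{m}(\alpha+2m)(\alpha)_m x^m/m!=\alpha(1+x)(1-x)^{-\alpha-1}$, yielding the quadratic transformation to a ${}_3F_2$ at $-4x/(1-x)^2$, and you finish by sending $x\to-1$ using the classical $z\to1^-$ asymptotics of ${}_3F_2$ together with Abel's theorem and the duplication formula.

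Your approach yields more---a transformation valid on the whole disc---but pays for it at the end: the ${}_3F_2$ boundary asymptotic you invoke (with the precise constant $\Gamma(1+a-b)\Gamma(1+a-c)\Gamma(\tfrac12)\big/[\Gamma(\tfrac{a+1}{2})\Gamma(1+\tfrac a2)\Gamma(1+a-b-c)]$ in front of $(1-z)^{-1/2}$) is a nontrivial analytic fact (of N{\o}rlund/B\"uhring type) external to the paper, which somewhat cuts against the paper's aim of giving a self-contained direct proof. You rightly flag this as the delicate point; the Carlson-type alternative you sketch for terminating $c=-N$ would avoid it but trades it for growth estimates. By contrast, the paper's route stays entirely within the ${}_2F_1$ toolbox it has set up.
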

\begin{proof}
This formula can be obtained as a limiting case of Dougall's formula (see for example, \cite{4}) for a finite $_7F_6$. Here we give an independent proof that do not use identities for $_{q+1}F_q$ with $q\geq 3$.
\begin{align*}
_4F_3\left[\begin{aligned} a, 1+a/2, b, c\hspace{0.8cm}\\ a/2, 1+a-b, 1+a-c\end{aligned}\;;\; -1\right]=&
\sum_{n=0}^{\infty} (-1)^n\frac{(a)_n(c)_n}{n!(1+a-c)_n}\frac{(b)_n(1+a/2)_n}{(1+a-b)_n(a/2)_n}\end{align*}
Using Corollary \ref{corollary2} with $c=1+a/2$, we have
\begin{align*}
&_4F_3\left[\begin{aligned} a, 1+a/2, b, c\hspace{0.8cm}\\ a/2, 1+a-b, 1+a-c\end{aligned}\;;\; -1\right]\\
=&\sum_{n=0}^{\infty} (-1)^n\frac{(a)_n(c)_n}{n!(1+a-c)_n}\sum_{r=0}^{\infty} \frac{(a+n)_r(a/2-b)_r(-n)_r}{r! (1+a-b)_r(a/2)_r}\\
=&\sum_{r=0}^{\infty}\frac{(a/2-b)_r}{r! (1+a-b)_r(a/2)_r}\sum_{n=r}^{\infty}(-1)^{n-r}\frac{(a)_{n+r}(c)_n}{(n-r)!(1+a-c)_n}\\
=&\sum_{r=0}^{\infty}\frac{(a/2-b)_r}{r! (1+a-b)_r(a/2)_r}\sum_{n=0}^{\infty}(-1)^{n}\frac{(a)_{n+2r}(c)_{n+r}}{n!(1+a-c)_{n+r}}\\
=&\sum_{r=0}^{\infty}\frac{(a/2-b)_r(a)_{2r}(c)_r}{r! (a/2)_r(1+a-b)_r(1+a-c)_r}\sum_{n=0}^{\infty}(-1)^{n}\frac{(a+2r)_{n}(c+r)_{n}}{n!(1+a-c+r)_{n}}\\
=&\sum_{r=0}^{\infty}\frac{(a/2-b)_r(a)_{2r}(c)_r}{r! (a/2)_r(1+a-b)_r(1+a-c)_r}\,_2F_1\left[\begin{aligned} a+2r, c+r\\1+a-c+r\end{aligned}\;;\;-1\right]\end{align*}
Using Lemma \ref{lemma9}, we find that
\begin{align*}
&_4F_3\left[\begin{aligned} a, 1+a/2, b, c\hspace{0.8cm}\\ a/2, 1+a-b, 1+a-c\end{aligned}\;;\; -1\right]\\=&\frac{1}{2}\sum_{r=0}^{\infty}\frac{(a/2-b)_r(a)_{2r}(c)_r}{r! (a/2)_r(1+a-b)_r(1+a-c)_r}\frac{\di\Gamma\left(\frac{a}{2}+r\right)\Gamma(1+a-c+r)}
{\di\Gamma(a+2r)\Gamma\left(1+\frac{a}{2}-c\right)}\\
=&\frac{1}{2}\frac{\di\Gamma\left(\frac{a}{2}\right)\Gamma(1+a-c)}
{\di\Gamma(a)\Gamma\left(1+\frac{a}{2}-c\right)}\sum_{r=0}^{\infty}\frac{(a/2-b)_r(c)_r}{r! (1+a-b)_r}\\
=&\frac{\di\Gamma\left(1+\frac{a}{2}\right)\Gamma(1+a-c)}
{\di\Gamma(1+a)\Gamma\left(1+\frac{a}{2}-c\right)}\,_2F_1\left[\begin{aligned} a/2-b, c \\1+a-b\end{aligned}\;;\;1\right]\end{align*}
Lemma \ref{lemma7} then gives
\begin{align*}
_4F_3\left[\begin{aligned} a, 1+a/2, b, c\hspace{0.8cm}\\ a/2, 1+a-b, 1+a-c\end{aligned}\;;\; -1\right]=&\frac{\di\Gamma\left(1+\frac{a}{2}\right)\Gamma(1+a-c)}
{\di\Gamma(1+a)\Gamma\left(1+\frac{a}{2}-c\right)}\frac{\di\Gamma(1+a-b)\Gamma\left(1+\frac{a}{2}-c\right)}{\di\Gamma\left(1+\frac{a}{2}\right)\Gamma(1+a-b-c)}\\
=&\frac{\Gamma(1+a-b)\Gamma(1+a-c)}{\Gamma(1+a)\Gamma(1+a-b-c)},
\end{align*}which proves the theorem.
\end{proof}
Finally, we prove the main result of this section.
\begin{theorem}\label{theorem7}Let $s\geq 0$. We have the formula
\begin{align*}
&_{2s+4}F_{2s+3}\left[\begin{aligned}a, 1+a/2, b_1, c_1, \ldots, b_{s+1}, c_{s+1} \hspace{2cm}\\a/2, 1+a-b_1, 1+a-c_1, \ldots, 1+a-b_{s+1}, 1+a-c_{s+1}\end{aligned}\,;\,-1\right]\\
=&\frac{\Gamma(1+a-b_{s+1})\Gamma(1+a-c_{s+1})}{\Gamma(1+a)\Gamma(1+a-b_{s+1}-c_{s+1})}\sum_{k_1, k_2, \ldots, k_s\geq 0} \\
&\times \prod_{j=1}^s \frac{(1+a-b_j-c_j)_{k_j}(b_{j+1})_{k_1+\ldots+k_j}(c_{j+1})_{k_1+\ldots+k_j}}{k_j!(1+a-b_j)_{k_1+\ldots+k_j}(1+a-c_j)_{k_1+\ldots+k_j}},
\end{align*}whenever the left hand side is convergent.
\end{theorem}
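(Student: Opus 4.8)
The plan is to prove Theorem \ref{theorem7} by induction on $s$, using Theorem \ref{Theorem3} as the base case and the evaluation technique from its proof as the inductive engine. For $s=0$ the right hand side is just $\Gamma(1+a-b_1)\Gamma(1+a-c_1)/(\Gamma(1+a)\Gamma(1+a-b_1-c_1))$ (the product over $j$ from $1$ to $0$ being empty, and the sum over an empty index set being $1$), which is exactly Theorem \ref{Theorem3} with $b=b_1$, $c=c_1$. So the real content is the inductive step.

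For the inductive step I would mimic the manipulation used to prove Theorem \ref{Theorem3}. Writing out the $_{2s+4}F_{2s+3}$ as a sum over $n$, I peel off the factor $(b_1)_n(1+a/2)_n/\bigl((1+a-b_1)_n(a/2)_n\bigr)$ and apply Corollary \ref{corollary2} with ``$c$'' $=1+a/2$ to rewrite $(b_1)_n(1+a/2)_n/\bigl((1+a-b_1)_n(a/2)_n\bigr)$ as $\sum_{r\geq 0}(a+n)_r(a/2-b_1)_r(-n)_r/\bigl(r!(1+a-b_1)_r(a/2)_r\bigr)$. Interchanging the $n$ and $r$ summations, shifting $n\mapsto n+r$ to kill the $(-n)_r$, and using the Pochhammer shift identities $(a)_{n+2r}=(a)_{2r}(a+2r)_n$, $(x)_{n+r}=(x)_r(x+r)_n$, exactly as in the proof of Theorem \ref{Theorem3}, collapses the inner $n$-sum into a $_{2s+2}F_{2s+1}$ at argument $-1$ with top parameter $a+2r$, bottom parameter $a/2+r$ replaced appropriately --- in other words a hypergeometric series of the same shape as the original but with $s$ lowered by one and with $a$ shifted to $a+2r$, $b_1,c_1$ dropped, and each $b_j,c_j$ for $j\geq 2$ shifted by $r$. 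At that point I invoke the induction hypothesis on this inner series; the remaining outer sum over $r$ is the $k_1$-summation in the claimed formula, with $k_2,\dots,k_s$ coming from the inductively produced multi-sum.

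The main obstacle I anticipate is bookkeeping: after applying Corollary \ref{corollary2} and the index shift, one must verify that the inner series really is $_{2s+2}F_{2s+1}\bigl[a+2r,1+(a+2r)/2,b_2+r,c_2+r,\dots;(a+2r)/2,1+a-b_2+r,\dots;-1\bigr]$ up to an explicit prefactor built from $(a)_{2r}$, $(a/2-b_1)_r$, $(c_j)_r$'s and such --- in particular that $1+(a+2r)-(b_j+r) = 1+a-b_j+r$ matches the $(1+a-b_j)_{k_1+\dots+k_j}$ with the outer variable named $k_1=r$ absorbed into the subscript $k_1+\dots+k_j$, and that the extra factor $1+(a+2r)/2$ over $(a+2r)/2$ is genuinely produced (it is: $(1+a/2)_n/(a/2)_n$ transmutes under the shift into $(1+a/2+r)_n/(a/2+r)_n$ after the Corollary is applied to the inner series, since Corollary \ref{corollary2} is what creates precisely this ratio). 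One also needs the convergence hypothesis ``whenever the left hand side is convergent'' to propagate: the condition $\mathrm{Re}(2+a-2b_1-2c_1)>0$ needed to apply Theorem \ref{Theorem3} and Lemma \ref{lemma9} at each stage should follow from convergence of the full $_{2s+4}F_{2s+3}$ together with the positivity of the $(c_j)_r$-type factors, but I would state this carefully, perhaps first proving the identity under the stronger assumption that every $\mathrm{Re}(2+a-2b_j-2c_j)>0$ and then extending by analytic continuation in the parameters. Once the shape-matching is checked, the induction closes essentially formally, and the telescoping of prefactors reproduces exactly the product $\prod_{j=1}^s$ in the statement.
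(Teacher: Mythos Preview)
Your overall strategy---induction on $s$ using Corollary~\ref{corollary2} followed by interchange and an index shift---is exactly the paper's approach, but your implementation has a critical error in the choice of which pair of Pochhammer factors to feed into Corollary~\ref{corollary2}. You propose to peel off $(b_1)_n(1+a/2)_n/\bigl((1+a-b_1)_n(a/2)_n\bigr)$ and apply the Corollary with ``$c$''$=1+a/2$, imitating the proof of Theorem~\ref{Theorem3}. But this \emph{consumes} the factor $(1+a/2)_n/(a/2)_n$, so after the shift the inner $n$-sum is a $_{2s+2}F_{2s+1}$ which no longer carries the very-well-poised pair $1+a'/2$ over $a'/2$; the induction hypothesis therefore does not apply to it. (Your parenthetical asserting that ``$(1+a/2)_n/(a/2)_n$ transmutes under the shift into $(1+a/2+r)_n/(a/2+r)_n$\ldots since Corollary~\ref{corollary2} is what creates precisely this ratio'' is backward: the Corollary removes that ratio, it does not create it.) A second symptom of the same problem: your choice produces a prefactor $(a/2-b_1)_r$, whereas the target formula calls for $(1+a-b_1-c_1)_{k_1}$; and you write ``$b_1,c_1$ dropped'' when in fact only $b_1$ and $1+a/2$ would be consumed, leaving $c_1$ behind.

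The fix, and what the paper actually does, is to apply Corollary~\ref{corollary2} to the pair $(b_1,c_1)$ instead. This gives the factor $(1+a-b_1-c_1)_r$ directly, and---crucially---leaves $(1+a/2)_n/(a/2)_n$ untouched, so that after the shift $n\mapsto n+r$ it becomes $\frac{(1+a/2)_r}{(a/2)_r}\cdot\frac{(1+a/2+r)_n}{(a/2+r)_n}$, preserving the very-well-poised structure of the inner series with $a'=a+2r$. The outer prefactor $(1+a/2)_r/(a/2)_r$ then cancels against $(a)_{2r}\cdot\Gamma(1+a)/\Gamma(1+a+2r)=a/(a+2r)$ to give $1$, and setting $r=k_1$ completes the induction exactly as you outline in your last paragraph.
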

\begin{proof}In \cite{2}, this formula is obtained as the limiting case of another hypergeometric identity, which is the $q\rightarrow 1$ limit of a basic hypergeometric identity proved by Andrews \cite{12}. Here we give a direct proof using induction on $s$. The case $s=0$ was proved in Theorem \ref{Theorem3}. If $s\geq 1$, using Corollary \ref{corollary2}, we have
\begin{align*}
&_{2s+4}F_{2s+3}\left[\begin{aligned}a, 1+a/2, b_1, c_1, \ldots, b_{s+1}, c_{s+1}\hspace{2cm}\\a/2, 1+a-b_1, 1+a-c_1, \ldots, 1+a-b_{s+1}, 1+a-s_{s+1}\end{aligned}\,;\,-1\right]\\
=&\sum_{n=0}^{\infty}(-1)^n\frac{(a)_n(1+a/2)_n(b_1)_n(c_1)_n\ldots(b_{s+1})_n(c_{s+1})_n }{n! (a/2)_n(1+a-b_1)_n(1+a-c_1)_n \ldots (1+a-b_{s+1})_n (1+a-c_{s+1})_n  }\\
=&\sum_{n=0}^{\infty}(-1)^n\frac{(a)_n(1+a/2)_n(b_2)_n(c_2)_n\ldots(b_{s+1})_n(c_{s+1})_n }{n! (a/2)_n(1+a-b_2)_n(1+a-c_2)_n \ldots (1+a-b_{s+1})_n (1+a-c_{s+1})_n }\\&\times\sum_{r=0}^n\frac{(a+n)_r (1+a-b_1-c_1)_r(-n)_r}{r! (1+a-b_1)_r(1+a-c_1)_r}\\
=&\sum_{r=0}^{\infty}\sum_{n=r}^{\infty}\frac{(1+a-b_1-c_1)_r }{r! (1+a-b_1)_r(1+a-c_1)_r}\\
&\times(-1)^{n-r}\frac{(a)_{n+r}(1+a/2)_n(b_2)_n(c_2)_n\ldots(b_{s+1})_n(c_{s+1})_n }{(n-r)! (a/2)_n(1+a-b_2)_n(1+a-c_2)_n \ldots (1+a-b_{s+1})_n (1+a-c_{s+1})_n}\end{align*}\begin{align*}
=&\sum_{r=0}^{\infty} \sum_{n=0}^{\infty}\frac{(1+a-b_1-c_1)_r   }{r! (1+a-b_1)_r(1+a-c_1)_r}\\
&\times(-1)^n\frac{(a)_{n+2r}(1+a/2)_{n+r}(b_2)_{n+r}(c_2)_{n+r}\ldots(b_{s+1})_{n+r}(c_{s+1})_{n+r} }{n! (a/2)_{n+r}(1+a-b_2)_{n+r}(1+a-c_2)_{n+r} \ldots (1+a-b_{s+1})_{n+r} (1+a-c_{s+1})_{n+r}  }\\
=&\sum_{r=0}^{\infty} \frac{(1+a-b_1-c_1)_r  }{r! (1+a-b_1)_r(1+a-c_1)_r}\frac{(a)_{2r}(1+a/2)_{r} \di \prod_{j=2}^{s+1}(b_j)_{r}(c_j)_{r}}{ (a/2)_{r} \di\prod_{j=2}^{s+1}(1+a-b_j)_{r}(1+a-c_j)_{r}}\\
&\times \sum_{n=0}^{\infty}(-1)^n\frac{(a+2r)_{n}(1+a/2+r)_{n}  \di \prod_{j=2}^{s+1}(b_j+r)_{n}(c_j+r)_{n}}{n! (a/2+r)_{n}  \di\prod_{j=2}^{s+1}(1+a-b_j+r)_{n}(1+a-c_j+r)_{n}}.\end{align*}
By induction hypothesis,
\begin{align*}
&\sum_{n=0}^{\infty}(-1)^n\frac{(a+2r)_{n}(1+a/2+r)_{n}  \di \prod_{j=2}^{s+1}(b_j+r)_{n}(c_j+r)_{n}}{n! (a/2+r)_{n}  \di\prod_{j=2}^{s+1}(1+a-b_j+r)_{n}(1+a-c_j+r)_{n}}\\
=&_{2s+2}F_{2s+1}\left[\begin{aligned}a+2r, 1+a/2+r, b_2+r, c_2+r, \ldots, b_{s+1}+r, c_{s+1}+r \hspace{2cm}\\a/2+r, 1+a-b_2+r, 1+a-c_2+r, \ldots, 1+a-b_{s+1}+r, 1+a-c_{s+1}+r \end{aligned}\,;\,-1\right]\\
=&\frac{\Gamma(1+a-b_{s+1}+r)\Gamma(1+a-c_{s+1}+r)}{\Gamma(1+a+2r)\Gamma(1+a-b_{s+1}-c_{s+1})}\sum_{k_2, k_3, \ldots, k_s\geq 0} \\
&\times \prod_{j=2}^s \frac{(1+a-b_j-c_j)_{k_j}(b_{j+1}+r)_{k_2+\ldots+k_j}(c_{j+1}+r)_{k_2+\ldots+k_j}}{k_j!(1+a-b_j+r)_{k_2+\ldots+k_j}(1+a-c_j+r)_{k_2+\ldots+k_j}}.
\end{align*}
Hence,
\begin{align*}
&_{2s+4}F_{2s+3}\left[\begin{aligned}a, 1+a/2, b_1, c_1, \ldots, b_{s+1}, c_{s+1}\hspace{2cm}\\a/2, 1+a-b_1, 1+a-c_1, \ldots, 1+a-b_{s+1}, 1+a-c_{s+1}\end{aligned}\,;\,-1\right]\\
=&\sum_{r=0}^{\infty}\frac{(1+a-b_1-c_1)_r  }{r! (1+a-b_1)_r(1+a-c_1)_r}\frac{(a)_{2r}(1+a/2)_{r} \di \prod_{j=2}^{s+1}(b_j)_{r}(c_j)_{r}}{ (a/2)_{r} \di\prod_{j=2}^{s+1}(1+a-b_j)_{r}(1+a-c_j)_{r}}\\
&\times \frac{\Gamma(1+a-b_{s+1}+r)\Gamma(1+a-c_{s+1}+r)}{\Gamma(1+a+2r)\Gamma(1+a-b_{s+1}-c_{s+1})}\sum_{k_2, k_3, \ldots, k_s\geq 0}\\
&\times \prod_{j=2}^s \frac{(1+a-b_j-c_j)_{k_j}(b_{j+1}+r)_{k_2+\ldots+k_j}(c_{j+1}+r)_{k_2+\ldots+k_j}}{k_j!(1+a-b_j+r)_{k_2+\ldots+k_j}(1+a-c_j+r)_{k_2+\ldots+k_j}}\end{align*}\begin{align*}
=&\frac{\Gamma(1+a-b_{s+1})\Gamma(1+a-c_{s+1})}{\Gamma(1+a)\Gamma(1+a-b_{s+1}-c_{s+1})}\sum_{r, k_2, k_3, \ldots, k_s\geq 0}\frac{(1+a-b_1-c_1)_r  }{r! (1+a-b_1)_r(1+a-c_1)_r}\\&\times\frac{a}{a+2r}\frac{ (1+a/2)_{r}(b_2)_{r}(c_2)_{r}}{ (a/2)_{r} }\sum_{k_2, k_3, \ldots, k_s\geq 0}\\
&\times \prod_{j=2}^s \frac{(1+a-b_j-c_j)_{k_j}(b_{j+1})_{r+k_2+\ldots+k_j}(c_{j+1})_{r+k_2+\ldots+k_j}}{k_j!(1+a-b_j)_{r+k_2+\ldots+k_j}(1+a-c_j)_{r+k_2+\ldots+k_j}}
\end{align*}
But
\begin{align*}
\frac{a}{a+2r}\frac{ (1+a/2)_{r}}{ (a/2)_{r} }=\frac{a}{a+2r}\frac{a/2+r}{a/2}=1.
\end{align*} Setting $r=k_1$ complete the induction and prove the theorem.
\end{proof}

\bigskip
\section{Yet Another Alternative Proof of Zagier's Formula}
In this section, we give another simpler proof of Zagier's formula:
\begin{theorem}\label{Zagier2}
\begin{align}
H(a, b)=&2\sum_{r=1}^{K}(-1)^r\left\{\begin{pmatrix} 2r\\2a+2\end{pmatrix}\zeta(2r+1)+\begin{pmatrix} 2r\\2b+1\end{pmatrix}\zeta(\overline{2r+1})\right\}H(K-r),\label{eq62}\\
H^{\star}(a, b)=&-2\sum_{r=1}^{K} \left\{\left[\begin{pmatrix} 2r\\2a\end{pmatrix}-\delta_{r,a}\right]\zeta(2r+1)+\begin{pmatrix} 2r\\2b+1\end{pmatrix}\zeta(\overline{2r+1})\right\}H^{\star}(K-r),\label{eq63}
\end{align}where $K=a+b+1$.
\end{theorem}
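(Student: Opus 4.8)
I would establish the two formulas in turn: \eqref{eq63} is an essentially mechanical consequence of Theorem~\ref{Pilehrood} and Theorem~\ref{doubleEuler2}, and \eqref{eq62} is then extracted from \eqref{eq63}. The one auxiliary fact needed is the elementary evaluation $\zeta(\overline{2m})=-\tfrac12 H^\star(m)$ for all $m\ge 0$: for $m\ge 1$ this is \eqref{eq7} combined with the classical identity $\zeta^\star(\{2\}^m)=2(1-2^{1-2m})\zeta(2m)$ (equivalently, the coefficient identity contained in $\sum_{m\ge 0}H^\star(m)x^{2m}=\pi x/\sin(\pi x)$), and for $m=0$ it is the symbolic convention $\zeta(\overline 0)=-\tfrac12=-\tfrac12 H^\star(0)$.

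\textbf{Deriving \eqref{eq63}.} Set $r=2a+1$, $s=2b+2$ in Theorem~\ref{doubleEuler2}, so $k=r+s=2a+2b+3=2K+1$ is odd, $(-1)^r=-1$ and $\tfrac{1+(-1)^s}{2}=1$. Substituting the expansion of $\zeta(2a+1,\overline{2b+2})$ given by \eqref{eq35} (valid when $a\ge 1$) into Theorem~\ref{Pilehrood}, the term $-2\zeta(\overline{k})$ cancels $-4\cdot(-\tfrac12)\zeta(\overline k)$, leaving
\[
H^\star(a,b)=-4\,\zeta(2a+1)\zeta(\overline{2b+2})+4\sum_{l=0}^{K}\Big[\binom{k-2l-1}{2a}\zeta(k-2l)+\binom{k-2l-1}{2b+1}\zeta(\overline{k-2l})\Big]\zeta(\overline{2l}).
\]
Reindexing by $r'=K-l$ turns $k-2l$ into $2r'+1$, $\binom{k-2l-1}{2a}$ into $\binom{2r'}{2a}$, $\binom{k-2l-1}{2b+1}$ into $\binom{2r'}{2b+1}$, and $\zeta(\overline{2l})$ into $-\tfrac12 H^\star(K-r')$, so the sum becomes $-2\sum_{r'=1}^{K}\big[\binom{2r'}{2a}\zeta(2r'+1)+\binom{2r'}{2b+1}\zeta(\overline{2r'+1})\big]H^\star(K-r')$ (the $r'=0$ term drops since $\binom{0}{2a}=\binom{0}{2b+1}=0$ when $a\ge 1$), while the remaining piece is $-4\zeta(2a+1)\zeta(\overline{2b+2})=2\zeta(2a+1)H^\star(K-a)=-2\sum_{r'=1}^{K}(-\delta_{r',a})\zeta(2r'+1)H^\star(K-r')$. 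Adding the two pieces yields \eqref{eq63}. For $a=0$ one argues identically but starting from the $r=1$ version \eqref{eq57}: it carries no $\zeta(r)\zeta(\overline s)$ term, and the truncation $l\le K-1$ of its first sum produces the coefficient $\binom{2r'}{0}=1$ of $\zeta(2r'+1)$ for every $r'\ge 1$ with no $\delta$-correction, which is precisely \eqref{eq63} at $a=0$ since $\delta_{r',0}=0$ for $r'\ge 1$.

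\textbf{Deriving \eqref{eq62} from \eqref{eq63}.} I would pass to the two-variable generating functions $\mathcal H(x,y)=\sum_{a,b\ge 0}H(a,b)x^{2a}y^{2b}$ and $\mathcal H^\star(x,y)=\sum_{a,b\ge 0}H^\star(a,b)x^{2a}y^{2b}$, and compute the generating function of each side of \eqref{eq62} and \eqref{eq63}. Summing the right-hand sides against $x^{2a}y^{2b}$ turns the binomials into $\sum_{a\ge 0}\binom{2r}{2a}x^{2a}=\tfrac12[(1+x)^{2r}+(1-x)^{2r}]$ and its $-1$, $-x^{2r}$ and odd-parity variants, and uses $\sum_m H(m)x^{2m}=\sinh(\pi x)/(\pi x)$, $\sum_m H^\star(m)x^{2m}=\pi x/\sin(\pi x)$, together with the digamma/cotangent generating series for $\sum_{r\ge 1}\zeta(2r+1)t^{2r}$ and, via \eqref{eq7}, for $\sum_{r\ge 1}\zeta(\overline{2r+1})t^{2r}$. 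One then feeds the closed form of $\mathcal H^\star$ furnished by \eqref{eq63} into the relation linking $\mathcal H$ and $\mathcal H^\star$ — the relation reflecting that the passage from $\zeta$ to $\zeta^\star$ convolves the prefix $\{2\}^a$ and the suffix $\{2\}^b$, on either side of the central $3$-block, with the pure-$\{2\}$ star values, so that $\mathcal H$ and $\mathcal H^\star$ differ by the reciprocal factors $\sinh(\pi x)/(\pi x)$ and $\pi x/\sin(\pi x)$ in $x$ (and likewise in $y$) — and simplifies the result back into the generating function of the right-hand side of \eqref{eq62}; comparing coefficients of $x^{2a}y^{2b}$ then gives \eqref{eq62}, in a form that makes its symmetry with \eqref{eq63} transparent.

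\textbf{Main difficulty.} The delicate step is the last one: pinning down the exact form of the relation between $\mathcal H$ and $\mathcal H^\star$ (the correct variable substitutions and the bookkeeping of the $\delta_{r,a}$ term and of the $r=1$ boundary case), and then carrying out the resulting identity among explicit combinations of $\sin$, $\sinh$, cotangent and digamma functions without sign slips. The derivation of \eqref{eq63}, by contrast, is routine once Theorems~\ref{Pilehrood} and~\ref{doubleEuler2} are available.
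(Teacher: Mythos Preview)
Your derivation of \eqref{eq63} is correct and matches the paper exactly: combine Theorem~\ref{Pilehrood} with \eqref{eq35}/\eqref{eq57}, then use $\zeta(\overline{2m})=-\tfrac12 H^\star(m)$ (which is the paper's \eqref{eq56}).

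The gap is in your route to \eqref{eq62}. The relation between $\mathcal H$ and $\mathcal H^\star$ you invoke does \emph{not} come from ``the passage from $\zeta$ to $\zeta^\star$ convolves the prefix $\{2\}^a$ and the suffix $\{2\}^b$''. Expanding $\zeta^\star(\{2\}^a,3,\{2\}^b)$ in terms of non-star values produces contractions across the central $3$ (e.g.\ $\zeta^\star(2,3,2)=\zeta(2,3,2)+\zeta(5,2)+\zeta(2,5)+\zeta(7)$), so one leaves the $H$-family immediately; no product relation of the shape you describe follows from that combinatorics. Moreover the actual identity is not a product in $(x,y)$ separately but involves a variable swap: in the paper's normalisation it is Lemma~\ref{lemma10},
\[
F(x,y)=-\frac{\sin\pi x}{\pi x}\,\frac{\sin\pi y}{\pi y}\,F^\star(y,x),
\]
and this is obtained not combinatorially but by recognising (via \eqref{eq53}--\eqref{eq54}) that both $F$ and $F^\star$ equal $\left.\tfrac{d}{dz}\right|_{z=0}{}_3F_2\!\left[\begin{smallmatrix}u,-u,z\\1+v,1-v\end{smallmatrix};1\right]$ up to the $\sin/\pi$ prefactors, with the roles of the two variables exchanged. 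Once you have that relation, the rest of your plan (repackage \eqref{eq63} as $F^\star(x,y)=\frac{\pi y}{\sin\pi y}U(x,y)+\frac{\pi x}{\sin\pi x}V(x,y)$, apply Lemma~\ref{lemma10}, expand $U(y,x)$, $V(y,x)$ and read off coefficients) is exactly what the paper does. So the missing idea is the hypergeometric source of the $\mathcal H/\mathcal H^\star$ link; your proposed combinatorial justification would not produce it.
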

Define the generating functions
\begin{align*}
G(x)=&\sum_{a=0}^{\infty}(-1)^a H(a)x^{2a},\\
G^{\star}(x)=&\sum_{a=0}^{\infty} H^{\star}(a)x^{2a},\\
F(x,y)=&\sum_{a=0}^{\infty}\sum_{b=0}^{\infty} (-1)^{a+b+1}H(a,b)x^{2a+2}y^{2b},\\
F^{\star}(x,y)=&\sum_{a=0}^{\infty}\sum_{b=0}^{\infty}  H^*(a,b)x^{2a}y^{2b+2}.
\end{align*}Here we use the convention that $H(0)=H^{\star}(0)=1$. Notice that our definition is slightly different from those defined in \cite{1} and \cite{3}.

It is well-known that
\begin{align}
G(x)=&\prod_{n=1}^{\infty}\left(1-\frac{x^2}{n^2}\right)=\frac{\sin\pi x}{\pi x},\label{eq51}\\
G^{\star}(x)=&\frac{1}{\di\prod_{n=1}^{\infty}\left(1-\frac{x^2}{n^2}\right)}=\frac{\pi x}{\sin \pi x}.\label{eq52}
\end{align}
Eq. \eqref{eq51} gives immediately
\begin{align*}
H(a)=&\frac{\pi^{2a}}{(2a+1)!}.
\end{align*}
Using the formula
\begin{align*}
\frac{\pi x}{\sin \pi x}=&1-2\sum_{j=1}^{\infty}(-1)^j\frac{x^2}{j^2-x^2}\\
=&1-2\sum_{a=1}^{\infty}\zeta(\overline{2a})x^{2a},
\end{align*}we obtain from Eq. \eqref{eq52} that for $a\geq 1$,
\begin{align}\label{eq56}
H^{\star}(a)=&-2\zeta(\overline{2a}).
\end{align}
As in \cite{1}, we find that for the generating functions $F(x, y)$ and $F^{\star}(x, y)$,
\begin{equation}\label{eq53}\begin{split}
F(x,y)
=&-x^2\sum_{m=1}^{\infty} \prod_{j=1}^{m-1}\left(1-\frac{x^2}{j^2}\right)\frac{1}{m^3}\prod_{k=m+1}^{\infty}\left(1-\frac{y^2}{k^2}\right)\\
=&\frac{\sin\pi y}{\pi y}\left.\frac{d}{dz}\right|_{z=0}\sum_{m=1}^{\infty} \frac{(-x)_m(x)_m}{(1-y)_m(1+y)_m}z(z+1)\ldots(z+m-1)\frac{1}{m!}\\
=&\frac{\sin\pi y}{\pi y}\left.\frac{d}{dz}\right|_{z=0}\,_3F_2\left[\begin{aligned} x,-x,z\quad\\1+y,1-y\end{aligned};1\right],
\end{split}\end{equation}
\begin{equation}\label{eq54}\begin{split}
F^{\star}(x,y)
=& y^2\sum_{m=1}^{\infty} \prod_{j=1}^{m}\left(1-\frac{x^2}{j^2}\right)^{-1}\frac{1}{m^3}\prod_{k=m}^{\infty}\left(1-\frac{y^2}{k^2}\right)^{-1}\\
=&-\frac{\pi y}{\sin\pi y} \left.\frac{d}{dz}\right|_{z=0}\sum_{m=1}^{\infty} \frac{(-y)_m(y)_m}{(1-x)_m(1+x)_m}z(z+1)\ldots(z+m-1)\frac{1}{m!}\\
=&-\frac{\pi y}{\sin\pi y} \left.\frac{d}{dz}\right|_{z=0}\,_3F_2\left[\begin{aligned} y,-y,z\quad\\1+x,1-x\end{aligned};1\right].
\end{split}\end{equation}
Comparing \eqref{eq53} and \eqref{eq54} give
\begin{lemma}\label{lemma10}
\begin{align*}
F(x,y)=&-\frac{\sin\pi y}{\pi y}\frac{\sin\pi x}{\pi x}F^{\star}(y, x).
\end{align*}
\end{lemma}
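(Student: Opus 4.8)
The plan is to obtain Lemma~\ref{lemma10} immediately by comparing the two hypergeometric expressions \eqref{eq53} and \eqref{eq54} already derived for $F(x,y)$ and $F^{\star}(x,y)$. The crucial observation is that both generating functions are built from the \emph{same} analytic object --- the $z$-derivative at $z=0$ of a $_3F_2$ evaluated at $1$ --- with the roles of the numerator parameters $\pm x$ and $\pm y$ (and of the denominator parameters $1\pm x$ and $1\pm y$) interchanged, and with an explicit elementary prefactor of the form $\sin\pi(\cdot)/(\pi\cdot)$ attached. Once this is noticed, the lemma is a one-line manipulation.

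Concretely, the first step is to substitute $(x,y)\mapsto(y,x)$ in \eqref{eq54}, which gives
\begin{align*}
F^{\star}(y,x)=-\frac{\pi x}{\sin\pi x}\left.\frac{d}{dz}\right|_{z=0}\,_3F_2\left[\begin{aligned} x,-x,z\quad\\1+y,1-y\end{aligned};1\right].
\end{align*}
The second step is to read off from \eqref{eq53} that
\begin{align*}
F(x,y)=\frac{\sin\pi y}{\pi y}\left.\frac{d}{dz}\right|_{z=0}\,_3F_2\left[\begin{aligned} x,-x,z\quad\\1+y,1-y\end{aligned};1\right],
\end{align*}
so that the two right-hand sides contain the identical $_3F_2$-derivative factor. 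The third step is pure bookkeeping: multiplying the displayed expression for $F^{\star}(y,x)$ by $-\dfrac{\sin\pi x}{\pi x}$ cancels the factor $-\dfrac{\pi x}{\sin\pi x}$, and multiplying in addition by $\dfrac{\sin\pi y}{\pi y}$ produces exactly the prefactor occurring in the formula for $F(x,y)$. Hence $F(x,y)=-\dfrac{\sin\pi y}{\pi y}\dfrac{\sin\pi x}{\pi x}F^{\star}(y,x)$, as claimed.

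There is essentially no obstacle here: the substantive work has already been done in establishing \eqref{eq53} and \eqref{eq54} (expanding $\sin\pi x/(\pi x)$ and its reciprocal as the relevant infinite products, rewriting the telescoping products $\prod_{j<m}(1-x^2/j^2)$ and $\prod_{k>m}(1-y^2/k^2)$ as ratios of Pochhammer symbols, and identifying the resulting sum over $m$ as the stated $_3F_2$ after extracting $\tfrac{d}{dz}\big|_{z=0}$). The only point to keep in mind is that the identity is to be read as an equality of power series about the origin in $x$ and $y$ --- precisely the sense in which \eqref{eq53} and \eqref{eq54} themselves were derived --- so that no additional convergence or interchange-of-limit questions arise, and the short comparison above is complete.
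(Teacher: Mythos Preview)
Your proposal is correct and follows exactly the paper's approach: the paper merely says ``Comparing \eqref{eq53} and \eqref{eq54} give'' and expects the reader to perform precisely the swap $(x,y)\mapsto(y,x)$ in \eqref{eq54} and the cancellation of prefactors that you wrote out. There is nothing to add.
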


To prove Theorem \ref{Zagier2}, we first give a slightly simpler proof of the following theorem established in \cite{3} and \cite{16}.
\begin{theorem}\label{Pilehrood2}For any $a, b\geq 0$,
\begin{align}\label{eq66} H^{\star}(a, b)=-4\zeta\left(2a+1, \overline{2b+2}\right)-2\zeta\left(\overline{2a+2b+3}\right).\end{align}
\end{theorem}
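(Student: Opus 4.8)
The plan is to prove the equivalent identity between the generating functions of the two sides of \eqref{eq66}. The left side is assembled by $F^{\star}(x,y)=\sum_{a,b\ge 0}H^{\star}(a,b)x^{2a}y^{2b+2}$, which by \eqref{eq54} equals $-\dfrac{\pi y}{\sin\pi y}\left.\dfrac{d}{dz}\right|_{z=0}{}_3F_2\!\left[\begin{smallmatrix}y,-y,z\\1+x,1-x\end{smallmatrix};1\right]$. For the right side I would start from the series definitions: since \eqref{eq5} gives $\zeta(2a+1,\overline{2b+2})=\sum_{1\le j<m}(-1)^m j^{-(2a+1)}m^{-(2b+2)}$ and $\zeta(\overline{2a+2b+3})=\sum_{m\ge 1}(-1)^m m^{-(2a+2b+3)}$, summing the geometric series in $x$ and in $y$ yields
\begin{align*}
\sum_{a,b\ge 0}\zeta(2a+1,\overline{2b+2})x^{2a}y^{2b+2}&=\sum_{1\le j<m}\frac{(-1)^m jy^2}{(j^2-x^2)(m^2-y^2)},\\
\sum_{a,b\ge 0}\zeta(\overline{2a+2b+3})x^{2a}y^{2b+2}&=\sum_{m\ge 1}\frac{(-1)^m my^2}{(m^2-x^2)(m^2-y^2)}.
\end{align*}
Thus \eqref{eq66} reduces to one identity relating the $z$-derivative of that ${}_3F_2$ to these two rational-kernel series.

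To handle the ${}_3F_2$ I would apply Theorem \ref{theorem7} with $s=1$ in the specialization $a=1-y$, $b_1=1-x-y$, $c_1=1+x-y$, $b_2=z$, $c_2=-y$. With these values $1+a-b_1=1+x$, $1+a-c_1=1-x$ and $1+a-b_1-c_1=y$, so the single $k_1$-sum on the right of Theorem \ref{theorem7}, which is ${}_3F_2\!\left[\begin{smallmatrix}1+a-b_1-c_1,\,b_2,\,c_2\\1+a-b_1,\,1+a-c_1\end{smallmatrix};1\right]$, becomes exactly the ${}_3F_2\!\left[\begin{smallmatrix}y,-y,z\\1+x,1-x\end{smallmatrix};1\right]$ we need, while the left side is the ${}_6F_5\!\left[\begin{smallmatrix}1-y,\,\frac{3-y}{2},\,1-x-y,\,1+x-y,\,z,\,-y\\\frac{1-y}{2},\,1+x,\,1-x,\,2-y-z,\,2\end{smallmatrix};-1\right]$ with prefactor $\dfrac{\Gamma(1+a-b_2)\Gamma(1+a-c_2)}{\Gamma(1+a)\Gamma(1+a-b_2-c_2)}=\dfrac{\Gamma(2-y-z)}{\Gamma(2-y)\Gamma(2-z)}$. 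Differentiating this identity at $z=0$ is clean: $z$ occurs in the ${}_6F_5$ only through the numerator parameter $b_2=z$ and the denominator parameter $1+a-b_2=2-y-z$, and since $(z)_n$ vanishes at $z=0$ for $n\ge 1$ the $n$-th term differentiates to $\dfrac{(n-1)!}{(2-y)_n}$ times the rest. Hence the ${}_6F_5$ collapses to the single series $\sum_{n\ge 1}\dfrac{(-1)^n}{n}\cdot\dfrac{1-y+2n}{1-y+n}\cdot\dfrac{(1-x-y)_n(1+x-y)_n(-y)_n}{(1+x)_n(1-x)_n\,(n+1)!}$, while the Gamma prefactor contributes $\psi(2-y)-\psi(2)$; this gives a closed ``single series plus digamma'' formula for $F^{\star}(x,y)$.

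It then remains to reconcile this formula with the right-hand generating function derived above. Writing $\dfrac{(1-x-y)_n(1+x-y)_n}{(1+x)_n(1-x)_n}=\prod_{j=1}^{n}\dfrac{(j-y)^2-x^2}{j^2-x^2}$ and expanding this product, together with $(-y)_n/(n+1)!$ and the factor $\dfrac{1-y+2n}{1-y+n}$, in partial fractions, the single sum over $n$ reindexes into the double sum $\sum_{1\le j<m}$ against the kernel $\dfrac{jy^2}{(j^2-x^2)(m^2-y^2)}$; the digamma term, combined with the prefactor $\dfrac{\pi y}{\sin\pi y}$, supplies the diagonal sum $\sum_{m\ge 1}\dfrac{my^2}{(m^2-x^2)(m^2-y^2)}$; and the numerical constants $-4$ and $-2$ come out. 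Comparing the coefficients of $x^{2a}y^{2b+2}$ then gives \eqref{eq66}. I expect this last reconciliation — turning the hypergeometric single sum into the double-Euler-sum generating function with the correct rational kernels, carefully tracking the $\dfrac{\pi y}{\sin\pi y}$ and $\psi$ contributions, and pinning down the constants $-4$ and $-2$ — to be the main technical hurdle; everything before it is either quoted directly from the excerpt (the representation \eqref{eq54} and the hypergeometric identity of Theorem \ref{theorem7}) or a routine specialization.
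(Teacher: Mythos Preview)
Your setup via Theorem~\ref{theorem7} with $s=1$ is correct: with $a=1-y$, $b_1=1-x-y$, $c_1=1+x-y$, $b_2=z$, $c_2=-y$ the right side of that theorem is exactly $\dfrac{\Gamma(2-y-z)}{\Gamma(2-y)\Gamma(2-z)}\cdot{}_3F_2\!\left[\begin{smallmatrix}y,-y,z\\1+x,1-x\end{smallmatrix};1\right]$, and differentiating at $z=0$ does produce a genuine identity linking $F^{\star}(x,y)$ to your single sum $S$ plus $\psi(2)-\psi(2-y)$. The trouble is entirely in what you call ``the main technical hurdle'': your sketch of how $S$ becomes the alternating double Euler generating function does not hold up. The ratio $\dfrac{(1-x-y)_n(1+x-y)_n}{(1+x)_n(1-x)_n}=\prod_{j=1}^n\dfrac{(j-y)^2-x^2}{j^2-x^2}$ does not telescope; its partial fractions in $x$ have residues which are themselves degree-$2n$ polynomials in $y$, and together with $(-y)_n/(n+1)!$ and $\dfrac{1-y+2n}{1-y+n}$ there is no visible mechanism by which the $y$-dependence collapses to the simple kernel $\dfrac{1}{m^2-y^2}$. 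More concretely, your claim that the digamma piece $\dfrac{\pi y}{\sin\pi y}\bigl(\psi(2)-\psi(2-y)\bigr)$ supplies the diagonal sum $\sum_{m\ge 1}\dfrac{(-1)^m m y^2}{(m^2-x^2)(m^2-y^2)}$ cannot be right as stated: the digamma piece is independent of $x$, while the diagonal sum is not.

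The paper avoids this difficulty by using Theorem~\ref{theorem7} with $s=2$ rather than $s=1$, introducing one extra pair $(b,c)=(-\alpha+x,-\alpha-x)$ and instead sending the base parameter $a\to 0$ and then differentiating in $\alpha$. The point of this choice is that on the $\,_{8}F_{7}$ side the very-well-poised Pochhammer ratios collapse to single rational factors, e.g.\ $\dfrac{(1+x)_k(1-x)_k}{(x)_k(-x)_k}=\dfrac{x^2-k^2}{x^2}$ and $\dfrac{(y)_k(-y)_k}{(1-y)_k(1+y)_k}=\dfrac{-y^2}{k^2-y^2}$, so the summand is already a simple rational function of $k$; the $\alpha$-derivative then turns the remaining product into the inner sum $\sum_{j=1}^{k-1}\dfrac{j}{j^2-x^2}$ via a log-derivative, producing the double-Euler kernel directly. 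On the other side, the choice $b_1=1+x$, $c_1=1-x$ forces $1+a-b_1-c_1=a-1$, so $(a-1)_{k_1}$ truncates the $k_1$-sum at $a=0$ to $k_1\in\{0,1\}$ and the surviving $k_2$-sum is exactly the ${}_3F_2$ derivative defining $F^{\star}$. In short, the paper's parametrisation is engineered so that \emph{both} sides of the hypergeometric identity are transparent; your parametrisation makes the ${}_3F_2$ side transparent but leaves the other side as a sum of products that you have not shown how to unravel.
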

\begin{proof}
By Theorem \ref{theorem7}, we have
\begin{align*}
&_8F_7\left[\begin{aligned} a, 1+a/2,  1+x, 1-x, -\alpha+x, -\alpha-x, y, -y\hspace{2cm}\\a/2,a-x, a+x,  1+a+\alpha-x, 1+a+\alpha+x, 1+a-y, 1+a+y \end{aligned}\;;\;-1\right]\\
=&\frac{\Gamma(1+a+y)\Gamma(1+a-y)}{\Gamma(1+a)^2}\\&\times\sum_{k_1, k_2\geq 0}\frac{(a-1)_{k_1}(-\alpha+x)_{k_1}(-\alpha-x)_{k_1}(1+a+2\alpha)_{k_2}(y)_{k_1+k_2}(-y)_{k_1+k_2}}
{k_1!k_2!(a-x)_{k_1}(a+x)_{k_1}(1+a+\alpha-x)_{k_1+k_2}(1+a+\alpha+x)_{k_1+k_2}}.
\end{align*}
Taking the $a\rightarrow 0$ limit, we find that
\begin{align*}
&1+2\sum_{k=1}^{\infty}\frac{(1+x)_k(1-x)_k(-\alpha+x)_{k}(-\alpha-x)_k(y)_k(-y)_k}{(-x)_k(x)_k(1+\alpha-x)_k(1+\alpha+x)_k(1-y)_k(1+y)_k}(-1)^k\\
=&\Gamma(1+y)\Gamma(1-y)\left\{\sum_{k=0 }^{\infty} \frac{(1+2\alpha)_{k} (y)_{k}(-y)_{k}}{k! (1+\alpha-x)_{k}(1+\alpha+x)_{k}}-\sum_{k=0}^{\infty}\frac{(1+2\alpha)_{k} (y)_{k+1}(-y)_{k+1}}{k!(1+\alpha-x)_{k+1}(1+\alpha+x)_{k+1} }\right\}\\
=&\frac{\pi y}{\sin \pi y}\left\{1+\sum_{k=1 }^{\infty} \frac{ (y)_{k}(-y)_{k}}{ (1+\alpha-x)_{k}(1+\alpha+x)_{k}}\left[\frac{(1+2\alpha)_{k}}{k!}-\frac{(1+2\alpha)_{k-1}}{(k-1)!}\right]\right\}.
\end{align*}
We then take the derivative of $\alpha$ at $\alpha=0$.
The left-hand side gives
\begin{align*}
2y^2\sum_{k=1}^{\infty} \frac{(-1)^k}{k^2-y^2}\left(4\sum_{j=1}^{k-1}\frac{j}{j^2-x^2}+\frac{2k}{k^2-x^2}\right),
\end{align*}and the right hand side gives
\begin{align*}
2\times\frac{\pi y}{\sin \pi y}\sum_{k=0 }^{\infty} \frac{ (y)_{k}(-y)_{k}}{ (1 -x)_{k}(1+x)_{k}}\frac{1}{k}=&2\times\frac{\pi y}{\sin\pi y} \left.\frac{d}{dz}\right|_{z=0}\,_3F_2\left[\begin{aligned} y,-y,z\quad\\1+x,1-x\end{aligned}\;;\;1\right]\\
=&-2F^{\star}(x,y).
\end{align*}
In other words,
\begin{align*}
F^{\star}(x,y)
=&-y^2\sum_{k=1}^{\infty} \frac{(-1)^k}{k^2-y^2}\left(4\sum_{j=1}^{k-1}\frac{j}{j^2-x^2}+\frac{2k}{k^2-x^2}\right)\\
=&-4 \sum_{a=0}^{\infty}\sum_{b=0}^{\infty}\sum_{k=1}^{\infty}\sum_{j=1}^{k-1}\frac{(-1)^k}{k^{2b+2}}\frac{1}{j^{2a+1}}x^{2a}y^{2b+2}
-2\sum_{a=0}^{\infty}\sum_{b=0}^{\infty}\sum_{k=1}^{\infty}\frac{(-1)^k}{k^{2a+2b+3}}x^{2a}y^{2b+2}\\
=&-\sum_{a=0}^{\infty}\sum_{b=0}^{\infty}\Bigl(4\zeta\left(2a+1,\overline{2b+2}\right)+2\zeta(\overline{2a+2b+3})\Bigr)x^{2a}y^{2b+2}.
\end{align*}
Comparing both sides give
\begin{align*}
H^{\star}(a, b)=&-4\zeta\left(2a+1,\overline{2b+2}\right)-2\zeta(\overline{2a+2b+3}),
\end{align*}which is the desired result.

\end{proof}

Now we can give a proof to the main theorem Theorem \ref{Zagier2}, which is different from that given in \cite{1, 11, 3}.
\begin{proof}
From Theorem \ref{Pilehrood2} and \eqref{eq35} in Theorem \ref{doubleEuler2}, we have
\begin{align*}
H^{\star}(a, b)
=&
-4 \zeta(2a+1)\zeta(\overline{2b+2})
\\&+4\sum_{l=0}^{K} \left[ \begin{pmatrix}2K-2l\\2a\end{pmatrix}\zeta(2K-2l+1) + \begin{pmatrix}2K-2l\\2b+1\end{pmatrix}\zeta(\overline{2K-2l+1})\right]\zeta(\overline{2l})
\\
=&4\sum_{r=0}^K\left[\begin{pmatrix}2r\\2a\end{pmatrix}\zeta(2r+1)
+\begin{pmatrix}2r\\2b+1\end{pmatrix}\zeta(\overline{2r+1})\right]\zeta(\overline{2K-2r})\\&-4 \zeta(2a+1)\zeta(\overline{2b+2})\\
=&-2\sum_{r=1}^{K} \left\{\left[\begin{pmatrix} 2r\\2a\end{pmatrix}-\delta_{r,a}\right]\zeta(2r+1)+\begin{pmatrix} 2r\\2b+1\end{pmatrix}\zeta(\overline{2r+1})\right\}H^{\star}(K-r).
\end{align*}In the last line, we have used the formula \eqref{eq56}.

From this formula for $H^{\star}(a, b)$ and \eqref{eq52}, we find that
\begin{align*}
F^{\star}(x,y)=&\frac{\pi y}{\sin \pi y}U(x,y)+\frac{\pi x}{\sin \pi x}V(x,y),
\end{align*}
where
\begin{align*}
U(x,y)=&-2\sum_{a=0}^{\infty}\sum_{\substack{r=a\\r\geq 1}}^{\infty} \left[\begin{pmatrix} 2r\\2a\end{pmatrix}-\delta_{r,a} \right] \zeta(2r+1)x^{2a}y^{2r-2a},\\
V(x,y)=&-2\sum_{b=0}^{\infty} \sum_{r=b+1}^{\infty}\begin{pmatrix} 2r\\2b+1\end{pmatrix}\zeta\left(\overline{2r+1}\right)x^{2r-2b-2}y^{2b+2}.
\end{align*}
By Lemma \ref{lemma10}, we have
\begin{align}\label{eq60}
F(x,y)=&-\frac{\sin\pi y}{\pi y}U(y,x)-\frac{\sin\pi x}{\pi x} V(y,x).
\end{align}
Now,
\begin{align*}
U(x,y)=&-2\sum_{r=1}^{\infty}\sum_{a=0}^r  \begin{pmatrix} 2r\\2a\end{pmatrix}x^{2a}y^{2r-2a} \zeta(2r+1)+2\sum_{r=1}^{\infty}\zeta(2r+1)x^{2r}\\
=&-\sum_{r=1}^{\infty}\left[(x+y)^{2r}+(x-y)^{2r}\right]\zeta(2r+1)+2\sum_{r=1}^{\infty}\zeta(2r+1)x^{2r},
\\
V(x,y)=&-2\sum_{r=1}^{\infty} \sum_{b=0}^{r-1}\begin{pmatrix} 2r\\2b+1\end{pmatrix}x^{2r-2b-2}y^{2b+2}\zeta\left(\overline{2r+1}\right)\\
=&-\frac{y}{x}\sum_{r=1}^{\infty}\left[(x+y)^{2r}-(x-y)^{2r}\right]\zeta\left(\overline{2r+1}\right).
\end{align*}
Hence,
\begin{align*}
U(y,x)=&-2\sum_{r=1}^{\infty}\sum_{a=0}^r \begin{pmatrix} 2r\\2a\end{pmatrix} \zeta(2r+1)x^{2a}y^{2r-2a}+2\sum_{r=1}^{\infty}\zeta(2r+1)y^{2r}\\
=&-2\sum_{a=1}^{\infty}\sum_{r=a}^{\infty} \begin{pmatrix} 2r\\2a\end{pmatrix} \zeta(2r+1)x^{2a}y^{2r-2a}\\
=&-2\sum_{a=0}^{\infty}\sum_{r=a+1}^{\infty} \begin{pmatrix} 2r\\2a+2\end{pmatrix} \zeta(2r+1)x^{2a+2}y^{2r-2a-2},\\
V(y,x)=&-\frac{x}{y}\sum_{r=1}^{\infty}\left[(x+y)^{2r}-(x-y)^{2r}\right]\zeta\left(\overline{2r+1}\right)\\
=&-2\sum_{b=0}^{\infty} \sum_{r=b+1}^{\infty}\begin{pmatrix} 2r\\2b+1\end{pmatrix}\zeta\left(\overline{2r+1}\right)x^{2r-2b}y^{2b}.
\end{align*}
Comparing both sides of \eqref{eq60} and using \eqref{eq51} give
\begin{align*}
H(a,b)=&2\sum_{r=1}^{K}(-1)^r\left\{\begin{pmatrix} 2r\\2a+2\end{pmatrix}\zeta(2r+1)+\begin{pmatrix} 2r\\2b+1\end{pmatrix}\zeta(\overline{2r+1})\right\}H(K-r).
\end{align*}This completes the proof.
\end{proof}

An advantage of the proof given here is that it better reflects the symmetries between the formulas for $H(a, b)$ and $H^{\star}(a, b)$.

In the rest of this section, we derive identities for the sums of $H(a, b)$ and $H^{\star}(a, b)$ with fixed $a+b$.

\begin{theorem}
\begin{align}
\sum_{a+b=K-1}H(a,b)=&\sum_{r=1}^K(-1)^{r-1}H(K-r)\zeta(2r+1)\label{eq61},\\
\sum_{a+b=K-1}H^{\star}(a,b)=& \sum_{r=1}^{K}H^{\star}(K-r)\zeta(2r+1).\label{eq62}
\end{align}
\end{theorem}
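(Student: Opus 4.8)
The plan is to obtain both identities by summing the two formulas of Theorem~\ref{Zagier2} over all pairs $(a,b)$ with $a+b=K-1$ and then collapsing the resulting binomial sums. For the star identity, fix $K$, put $b=K-1-a$, and sum the formula for $H^\star(a,b)$ in Theorem~\ref{Zagier2} over $a=0,1,\dots,K-1$. Since $H^\star(K-r)$ does not depend on $a$, this reduces the problem to evaluating, for each $r$ with $1\le r\le K$, the three inner sums $\sum_{a=0}^{K-1}\binom{2r}{2a}$, $\sum_{a=0}^{K-1}\delta_{r,a}$ and $\sum_{a=0}^{K-1}\binom{2r}{2(K-1-a)+1}$.

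These are elementary. Using $\sum_{j\text{ even}}\binom{2r}{j}=\sum_{j\text{ odd}}\binom{2r}{j}=2^{2r-1}$ and watching the endpoint $a=K-1$, one gets $\sum_{a=0}^{K-1}\binom{2r}{2a}=2^{2r-1}$ for $r<K$ and $=2^{2r-1}-1$ for $r=K$ (the term $\binom{2K}{2K}=1$ being truncated), while $\sum_{a=0}^{K-1}\delta_{r,a}=1$ for $r<K$ and $=0$ for $r=K$; hence the combination $\sum_{a=0}^{K-1}\binom{2r}{2a}-\sum_{a=0}^{K-1}\delta_{r,a}$ equals $2^{2r-1}-1$ uniformly in $r\in\{1,\dots,K\}$. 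Reindexing $j=K-1-a$ gives $\sum_{a=0}^{K-1}\binom{2r}{2(K-1-a)+1}=\sum_{j=0}^{K-1}\binom{2r}{2j+1}=2^{2r-1}$, again for all $r\le K$. Substituting these into the summed formula and using \eqref{eq7} in the form $\zeta(\overline{2r+1})=-(1-2^{-2r})\zeta(2r+1)$, the bracket simplifies:
\begin{align*}
(2^{2r-1}-1)\zeta(2r+1)+2^{2r-1}\zeta(\overline{2r+1})=-\tfrac12\,\zeta(2r+1).
\end{align*}
Together with the factor $-2$ in front of the formula for $H^\star(a,b)$, this yields $\sum_{a+b=K-1}H^\star(a,b)=\sum_{r=1}^{K}H^\star(K-r)\zeta(2r+1)$.

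The identity \eqref{eq61} for $H(a,b)$ is handled in exactly the same way, starting from the formula for $H(a,b)$ in Theorem~\ref{Zagier2}. Summing over $a+b=K-1$, one now needs $\sum_{a=0}^{K-1}\binom{2r}{2a+2}=\sum_{m=1}^{K}\binom{2r}{2m}=2^{2r-1}-1$ and $\sum_{a=0}^{K-1}\binom{2r}{2(K-1-a)+1}=2^{2r-1}$, so the bracket again collapses to $-\tfrac12\,\zeta(2r+1)$, and the prefactor $2(-1)^r$ converts this into $\sum_{r=1}^{K}(-1)^{r-1}H(K-r)\zeta(2r+1)$, which is \eqref{eq61}. (Alternatively one can read both identities off the generating functions, since $\sum_{a+b=K-1}H(a,b)$ and $\sum_{a+b=K-1}H^\star(a,b)$ are, up to sign, the coefficients of $x^{2K}$ in $F(x,x)$ and $F^\star(x,x)$; the termwise summation above is shorter.)

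The computation involves no real difficulty; the only point requiring attention is the bookkeeping at the upper limit $a=K-1$ when $r=K$, where one must check that the truncated even-index binomial $\binom{2K}{2K}$ and the vanishing of $\delta_{r,a}$ cancel, so that the clean coefficient $2^{2r-1}-1$ holds uniformly in $r$, and that the odd-index sums are never truncated for $r\le K$.
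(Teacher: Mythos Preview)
Your argument is correct and is genuinely different from the paper's. The paper does not sum Theorem~\ref{Zagier2} termwise; instead it specializes the generating function to the diagonal $y=x$, writes $F(x,x)$ via \eqref{eq53} as $\frac{\sin\pi x}{\pi x}\,\frac{d}{dz}\big|_{z=0}\,{}_3F_2\!\left[\begin{smallmatrix}x,-x,z\\1+x,1-x\end{smallmatrix};1\right]$, evaluates the ${}_3F_2$ in closed form by a classical summation (Theorem~3.4.1 in \cite{4}), differentiates using the digamma expansion $\psi(1+x)+\psi(1-x)-2\psi(1)=-2\sum_{r\ge1}\zeta(2r+1)x^{2r}$, and then reads off coefficients; the star identity is obtained in the same way from $F^\star(x,x)$. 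Thus the paper's proof is independent of Theorem~\ref{Zagier2}, whereas yours uses Theorem~\ref{Zagier2} as input together with the elementary identities $\sum_{j\text{ even}}\binom{2r}{j}=\sum_{j\text{ odd}}\binom{2r}{j}=2^{2r-1}$ and $\zeta(\overline{2r+1})=-(1-2^{-2r})\zeta(2r+1)$. Your route is shorter and avoids the hypergeometric summation entirely; the paper's route has the advantage of giving a self-contained derivation that does not presuppose the full Zagier evaluation. Your bookkeeping at $r=K$ (the truncated $\binom{2K}{2K}$ cancelling the missing $\delta_{K,a}$) is exactly the delicate point, and you handled it correctly.
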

\begin{proof}
Using \eqref{eq53}, we have
\begin{equation}\label{eq70}\begin{split}
F(x,x)=&\sum_{a=0}^{\infty}\sum_{b=0}^{\infty} (-1)^{a+b+1}H(a,b)x^{2a+2b+2}\\
=&\frac{\sin\pi x}{\pi x}\left.\frac{d}{dz}\right|_{z=0} \,_3F_2\left[\begin{aligned} x,-x,z\quad\\1+x,1-x\end{aligned}\;;\;1\right].
\end{split}\end{equation}
By Theorem 3.4.1 in \cite{4}, we have
\begin{align*}
\,_3F_2\left[\begin{aligned} x,-x,z\hspace{1cm}\\1+z+x,1+z-x\end{aligned}\;;\;1\right]=&\frac{\di\Gamma\left(1+\frac{z}{2}\right)^2\Gamma(1+z+x)\Gamma(1+z-x)}{\di
\Gamma(1+z)^2\Gamma\left(1+\frac{z}{2}+x\right)\Gamma\left(1+\frac{z}{2}-x\right)}.
\end{align*}
Observe that
\begin{align*}
\left.\frac{d}{dz}\right|_{z=0} \,_3F_2\left[\begin{aligned} x,-x,z\quad\\1+x,1-x\end{aligned}\;;\;1\right]=\left.\frac{d}{dz}\right|_{z=0} \,_3F_2\left[\begin{aligned} x,-x,z\hspace{1cm}\\1+z+x,1+z-x\end{aligned}\;;\;1\right].
\end{align*}Using
$$\psi(1+x)=\frac{d}{dx}\log\Gamma(1+x)=\psi(1)+\sum_{n=1}^{\infty}\left(\frac{1}{n}-\frac{1}{n+x}\right),$$we find that
\begin{align*}
\left.\frac{d}{dz}\right|_{z=0} \,_3F_2\left[\begin{aligned} x,-x,z\quad\\1+x,1-x\end{aligned}\;;\;1\right]=&\frac{1}{2}\left(\psi(1+x)-\psi(1)\right)+\frac{1}{2}\left(\psi(1-x)-\psi(1)\right)\\
=&-\sum_{r=1}^{\infty}\zeta(2r+1)x^{2r}.
\end{align*}
It follows from \eqref{eq70} and \eqref{eq51} that
\begin{align*}
\sum_{a=0}^{\infty}\sum_{b=0}^{\infty}(-1)^{a+b+1}H(a,b)x^{2a+2b+2}=-\sum_{k=0}^{\infty}(-1)^kH(k)x^{2k} \sum_{r=1}^{\infty}\zeta(2r+1)x^{2r}.
\end{align*}
Comparing both sides give
\begin{align*}
\sum_{a+b=K-1}H(a,b)=&\sum_{r=1}^K(-1)^{r-1}H(K-r)\zeta(2r+1).
\end{align*}
In a similar way, we get
\begin{align*}
\sum_{a+b=K-1}H^{\star}(a,b)=& \sum_{r=1}^{K}H^{\star}(K-r)\zeta(2r+1).
\end{align*}
\end{proof}
\begin{corollary}\label{cor2}
\begin{align}\label{eq65}
\zeta(\overline{2K+1})=&-\frac{1}{2K} \sum_{a+b=K-1}\left(1+\frac{1}{2} \delta_{a,0}\right)H^{\star}(a,b).
\end{align}
\end{corollary}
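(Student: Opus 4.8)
The plan is to combine the summation identity $\sum_{a+b=K-1}H^{\star}(a,b)=\sum_{r=1}^{K}H^{\star}(K-r)\zeta(2r+1)$ proved just above with a single application of the closed formula \eqref{eq63} to the special value $H^{\star}(0,K-1)$; the weight $1+\tfrac{1}{2}\delta_{a,0}$ is then exactly what is needed to cancel the $H^{\star}$-sum and isolate $\zeta(\overline{2K+1})$. Throughout we assume $K\ge 1$.

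First I would split off the anomalous term,
\[
\sum_{a+b=K-1}\Bigl(1+\tfrac{1}{2}\delta_{a,0}\Bigr)H^{\star}(a,b)=\sum_{a+b=K-1}H^{\star}(a,b)+\tfrac{1}{2}H^{\star}(0,K-1),
\]
and evaluate $H^{\star}(0,K-1)$ from \eqref{eq63}, taking $a=0$ and $b=K-1$ (so that the parameter $a+b+1$ appearing in \eqref{eq63} coincides with the fixed $K$ of the corollary). In that formula $\binom{2r}{2a}=\binom{2r}{0}=1$ for every $r$, the term $\delta_{r,0}$ vanishes since $r\ge 1$, and $\binom{2r}{2b+1}=\binom{2r}{2K-1}$ is $0$ for $1\le r\le K-1$ and equals $2K$ for $r=K$; using $H^{\star}(0)=1$, the formula collapses to
\[
H^{\star}(0,K-1)=-2\sum_{r=1}^{K}\zeta(2r+1)\,H^{\star}(K-r)-4K\,\zeta(\overline{2K+1}).
\]

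By the summation identity quoted above, the first sum on the right equals $-2\sum_{a+b=K-1}H^{\star}(a,b)$. Substituting this back gives
\[
\sum_{a+b=K-1}\Bigl(1+\tfrac{1}{2}\delta_{a,0}\Bigr)H^{\star}(a,b)=\sum_{a+b=K-1}H^{\star}(a,b)-\sum_{a+b=K-1}H^{\star}(a,b)-2K\,\zeta(\overline{2K+1})=-2K\,\zeta(\overline{2K+1}),
\]
and dividing by $-2K$ yields \eqref{eq65}. There is no real obstacle here: the only points requiring care are the bookkeeping of which binomial coefficients in \eqref{eq63} survive when $b=K-1$, and the matching of the running index $a+b+1$ in \eqref{eq63} with the fixed $K$ of the statement. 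As a sanity check, for $K=1$ the claim reads $\tfrac{3}{2}H^{\star}(0,0)=\tfrac{3}{2}\zeta(3)=-2\zeta(\overline{3})$, which is consistent with $\zeta(\overline{3})=-\tfrac{3}{4}\zeta(3)$ from \eqref{eq7}.
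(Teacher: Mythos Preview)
Your argument is correct and is essentially the paper's own proof: specialize \eqref{eq63} at $a=0$, $b=K-1$ to obtain $H^{\star}(0,K-1)=-2\sum_{r=1}^{K}\zeta(2r+1)H^{\star}(K-r)-4K\zeta(\overline{2K+1})$, then replace the sum via the identity $\sum_{a+b=K-1}H^{\star}(a,b)=\sum_{r=1}^{K}H^{\star}(K-r)\zeta(2r+1)$ and rearrange. Your bookkeeping of the surviving binomial coefficients and your $K=1$ sanity check are both fine.
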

\begin{proof}
Putting $a=0$ in \eqref{eq63}, we have
\begin{align*}
H^{\star}(0, K-1)=-2\sum_{r=1}^K\zeta(2r+1)H^{\star}(K-r)-4K\zeta(\overline{2K+1}).
\end{align*}
Eq. \eqref{eq62} then gives
\begin{align*}
-2K\zeta(\overline{2K+1})=&\sum_{a+b=K-1}H^{\star}(a,b)+\frac{1}{2}H^{\star}(0, K-1),
\end{align*}which gives the desired result.
\end{proof}
As mentioned in \cite{1}, \eqref{eq65} shows that the odd alternating zeta value $\zeta(\overline{2K+1})$ can be written as a $\mathbb{Q}$-linear combination of $H^{\star}(a, b)$. Eq. \eqref{eq56} expresses the even alternating zeta value $\zeta(\overline{2K})$ as
$$\zeta(\overline{2K})=-\frac{1}{2}H^{\star}(K).$$
From Theorem \ref{Pilehrood2} and Corollary \ref{cor2}, we have
\begin{corollary}
If $r\geq 0$, $s\geq 1$, then
\begin{align*}
\zeta(2r+1, \overline{2s})=\frac{1}{4K} \sum_{a+b=K-1}\left(1+\frac{1}{2} \delta_{a,0}-K\delta_{a, r}\right)H^{\star}(a,b),
\end{align*}where $r+s=K$.
\end{corollary}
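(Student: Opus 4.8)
The plan is to obtain this corollary purely by combining the two results just established, Theorem~\ref{Pilehrood2} and Corollary~\ref{cor2}; the only work is to align the indices and the Kronecker deltas. First I would specialize Theorem~\ref{Pilehrood2}, which is valid for all $a,b\geq 0$, to $a=r$ and $b=s-1$, which is legitimate since $r\geq 0$ and $s\geq 1$. As then $2b+2=2s$ and $2a+2b+3=2r+2s+1=2K+1$, the theorem reads
\[
H^{\star}(r,s-1)=-4\zeta\left(2r+1,\overline{2s}\right)-2\zeta\left(\overline{2K+1}\right),
\]
so that
\[
\zeta\left(2r+1,\overline{2s}\right)=-\frac14 H^{\star}(r,s-1)-\frac12\zeta\left(\overline{2K+1}\right).
\]

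Next I would eliminate $\zeta(\overline{2K+1})$ by invoking Corollary~\ref{cor2} with its $K$ equal to $r+s$, namely $\zeta(\overline{2K+1})=-\frac{1}{2K}\sum_{a+b=K-1}(1+\tfrac12\delta_{a,0})H^{\star}(a,b)$, which converts the previous display into
\[
\zeta\left(2r+1,\overline{2s}\right)=-\frac14 H^{\star}(r,s-1)+\frac{1}{4K}\sum_{a+b=K-1}\left(1+\frac12\delta_{a,0}\right)H^{\star}(a,b).
\]
Finally I would fold the stray term $-\tfrac14 H^{\star}(r,s-1)$ back into the sum: since $r+(s-1)=K-1$, the pair $(r,s-1)$ is one of the summation indices, and $-\tfrac14=\tfrac{1}{4K}\cdot(-K)$, so $-\tfrac14 H^{\star}(r,s-1)=\tfrac{1}{4K}\sum_{a+b=K-1}(-K\delta_{a,r})H^{\star}(a,b)$. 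Merging the two sums yields the coefficient $1+\tfrac12\delta_{a,0}-K\delta_{a,r}$ and hence exactly the asserted identity.

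There is essentially no obstacle here: the content is just the arithmetic amalgamation of two formulas proved above, and the only subtlety is keeping track of which $H^{\star}$ term each Kronecker delta lands on. The one point worth a remark in the write-up is the degenerate case $r=0$, in which $\delta_{a,0}$ and $\delta_{a,r}$ refer to the same index; the displayed identity still holds verbatim, the coefficient of $H^{\star}(0,K-1)$ then being $\tfrac32-K$. (As a sanity check, at $K=1$, i.e. $r=0$, $s=1$, the formula gives $\zeta(1,\overline{2})=\tfrac18 H^{\star}(0,0)=\tfrac18\zeta(3)$, which agrees with Theorem~\ref{Pilehrood2} together with $\zeta(\overline{3})=-\tfrac34\zeta(3)$.)
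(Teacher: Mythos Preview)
Your proof is correct and follows essentially the same route as the paper: specialize Theorem~\ref{Pilehrood2} at $(a,b)=(r,s-1)$, eliminate $\zeta(\overline{2K+1})$ via Corollary~\ref{cor2}, and absorb the leftover $-\tfrac14 H^{\star}(r,s-1)$ into the sum as a $-K\delta_{a,r}$ term. Your added remark on the overlap of the two Kronecker deltas when $r=0$ and the accompanying sanity check are a nice touch not present in the paper.
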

\begin{proof}
From \eqref{eq66}, we have
\begin{align*}
\zeta(2r+1, \overline{2s})=&-\frac{1}{4}H^{\star}(r, s-1)-\frac{1}{2}\zeta(\overline{2K+1}).
\end{align*}One then obtains from \eqref{eq65} that
\begin{align*}
\zeta(2r+1, \overline{2s})=&-\frac{1}{4}H^{\star}(r, s-1)+\frac{1}{4K} \sum_{a+b=K-1}\left(1+\frac{1}{2} \delta_{a,0}\right)H^{\star}(a,b)\\
=&\frac{1}{4K} \sum_{a+b=K-1}\left(1+\frac{1}{2} \delta_{a,0}-K\delta_{a, r}\right)H^{\star}(a,b).
\end{align*}
\end{proof}
In this corollary,  the alternating double zeta value $\zeta(2r+1, \overline{2s})$ is explicitly expressed in terms of $H^{\star}(a, b)$. An interesting question is whether it is possible to explicitly express all alternating double zeta values in terms of $H^{\star}(a)$ and $H^{\star}(a, b)$.
\bibliographystyle{amsplain}

\end{document}